\newcommand{\minimatrix}[4]{\begin{bmatrix}#1&#2\\#3&#4\end{bmatrix}}
\renewcommand{\phi}{\varphi}
\renewcommand{\Re}[1]{\operatorname{Re} #1 }
\renewcommand{\Im}[1]{\operatorname{Im} #1}
\newcommand{\Aut}{\operatorname{Aut}}
\newcommand{\Z}{\mathbb{Z}}
\newcommand{\R}{\mathbb{R}}
\newcommand{\Q}{\mathbb{Q}}
\newcommand{\C}{\mathbb{C}}
\newcommand{\T}{\mathbb{T}}
\newcommand{\im}{\operatorname{im}}
\newcommand{\Irr}{\operatorname{Irr}}
\renewcommand{\vec}[1]{{\bf #1}}
\newtheorem{thm}{Theorem}[section]
\newtheorem{lem}[thm]{Lemma}
\newtheorem{prop}[thm]{Proposition}
\theoremstyle{definition}
\newtheorem{example}[thm]{Example}
\let\oldenumerate=\enumerate
	\def\enumerate{
	\oldenumerate
	\setlength{\itemsep}{3pt}
	}
\let\olditemize=\itemize
	\def\itemize{
	\olditemize
	\setlength{\itemsep}{3pt}
	}
\def\legendre@dash#1#2{\hb@xt@#1{%
  \kern-#2\p@
  \cleaders\hbox{\kern.5\p@
    \vrule\@height.2\p@\@depth.2\p@\@width\p@
    \kern.5\p@}\hfil
  \kern-#2\p@
  }}
\def\@legendre#1#2#3#4#5{\mathopen{}\left(
  \sbox\z@{$\genfrac{}{}{0pt}{#1}{#3#4}{#3#5}$}%
  \dimen@=\wd\z@
  \kern-\p@\vcenter{\box0}\kern-\dimen@\vcenter{\legendre@dash\dimen@{#2}}\kern-\p@
  \right)\mathclose{}}
\newcommand\legendre[2]{\mathchoice
  {\@legendre{0}{1}{}{#1}{#2}}
  {\@legendre{1}{.5}{\vphantom{1}}{#1}{#2}}
  {\@legendre{2}{0}{\vphantom{1}}{#1}{#2}}
  {\@legendre{3}{0}{\vphantom{1}}{#1}{#2}}
}
\def\legendre@dash#1#2{\hb@xt@#1{%
  \kern-#2\p@
  \cleaders\hbox{\kern.5\p@
    \vrule\@height.2\p@\@depth.2\p@\@width\p@
    \kern.5\p@}\hfil
  \kern-#2\p@
  }}
\def\@legendret#1#2#3#4#5{\mathopen{}\bigg(
  \sbox\z@{$\genfrac{}{}{0pt}{#1}{#3#4}{#3#5}$}%
  \dimen@=\wd\z@
  \kern-\p@\vcenter{\box0}\kern-\dimen@\vcenter{\legendre@dash\dimen@{#2}}\kern-\p@
  \bigg)\mathclose{}}
\newcommand\legendret[2]{\mathchoice
  {\@legendret{0}{1}{}{#1}{#2}}
  {\@legendret{1}{.5}{\vphantom{1}}{#1}{#2}}
  {\@legendret{2}{0}{\vphantom{1}}{#1}{#2}}
  {\@legendret{3}{0}{\vphantom{1}}{#1}{#2}}
}
\begin{document}

 \title[The graphic nature of Gaussian periods]{The graphic nature of Gaussian periods}

    \author[W.~Duke]{William Duke}
    \address{   Department of Mathematics\\
            UCLA\\
            Los Angeles, California\\
            90095-1555 \\ USA}
    \email{wduke@ucla.edu}
    \urladdr{\url{http://www.math.ucla.edu/~wdduke/}}

    \author[S.~R.~Garcia]{Stephan Ramon Garcia}
    \address{   Department of Mathematics\\
            Pomona College\\
            Claremont, California\\
            91711 \\ USA}
    \email{Stephan.Garcia@pomona.edu}
    \urladdr{\url{http://pages.pomona.edu/~sg064747}}
    \thanks{Partially supported by National Science
Foundation Grant DMS-1001614.}

    \author{Bob Lutz}
    \curraddr{\textsc{Department of Mathematics\\University of Michigan\\
2074 East Hall\\
530 Church Street\\
Ann Arbor, MI  48109-1043}}
    \email{boblutz@umich.edu}

    \begin{abstract}
Recent work has shown that the study of supercharacters on Abelian groups provides 
a natural framework within which to study certain exponential sums of interest in number theory.
Our aim here is to initiate the study of Gaussian periods from this novel perspective.
Among other things, our approach reveals that these classical objects display dazzling visual
patterns of great complexity and remarkable subtlety.    	
    \end{abstract}

\maketitle

\section{Introduction}

	The theory of supercharacters, which generalizes  classical character theory, was recently
	introduced in an axiomatic fashion by P.~Diaconis and I.M.~Isaacs \cite{Diaconis-Isaacs}, extending the seminal work of
	C.~Andr\'e \cite{An95, An01, An02}.   Recent work has shown that the study of supercharacters on Abelian groups provides 
	a natural framework within which to study the properties of certain exponential sums of interest in number theory \cite{RSS, SESUP}
	(see also \cite{CKS}).
	In particular, Gaussian periods, Ramanujan sums, Kloosterman sums, and Heilbronn sums 
	can be realized in this way (see Table \ref{TableNT}).
	Our aim here is to initiate the study of Gaussian periods from this novel perspective.
	Among other things, this approach reveals that these classical objects display a dazzling array of visual
	patterns of great complexity and remarkable subtlety (see Figure \ref{fig:nicepics1}).

	Let $G$ be a finite group with identity 0, $\mathcal{K}$ a partition of $G$, and $\mathcal{X}$ a partition 
	of the set $\Irr(G)$ of irreducible characters of $G$. The ordered pair $(\mathcal{X},\mathcal{K})$ is called a 
	\emph{supercharacter theory} for $G$ if $\{0\}\in\mathcal{K}$, $|\mathcal{X}|=|\mathcal{K}|$, and 
	for each $X\in\mathcal{X}$, the generalized character
		\begin{equation*}
			\sigma_X=\sum_{\chi\in X} \chi(0)\chi
		\end{equation*}
	is constant on each $K\in\mathcal{K}$.
	The characters $\sigma_X$ are called \emph{supercharacters} of $G$ and
	the elements of $\mathcal{K}$ are called \emph{superclasses}.

	Let $G=\Z/n\Z$ and recall that the irreducible characters of $\Z/n\Z$ 
	are the functions $\chi_x(y)=e(\frac{xy}{n})$ for $x$ in $\Z/n\Z$, where $e(\theta)=\operatorname{exp}(2\pi i \theta)$. 
	For a fixed subgroup $A$ of $(\Z/n\Z)^{\times}$, let $\mathcal{K}$ denote the partition of $\Z/n\Z$ arising from the action
	$a\cdot x = ax$ of $A$. The action $a \cdot \chi_x = \chi_{a^{-1}x}$ of $A$ on the irreducible characters of $\Z/n\Z$
	yields a compatible partition $\mathcal{X}$. The reader can verify that $(\mathcal{X},\mathcal{K})$
	is a supercharacter theory on $\Z/n\Z$ and that the corresponding supercharacters are
	given by 
	\begin{equation}\label{eq-sigma}
		\sigma_X(y) = \sum_{x \in X} e\left( \frac{xy}{n} \right),
	\end{equation}
	where $X$ is an orbit in $\Z/n\Z$
	under the action of a subgroup $A$ of $(\Z/n\Z)^{\times}$. 
	When $n=p$ is an odd prime, \eqref{eq-sigma} is a \emph{Gaussian period}, a central object in the theory of cyclotomy. 
	For $p = kd+1$, Gauss defined the \emph{$d$-nomial periods} $\eta_j = \sum_{\ell=0}^{d-1} \zeta_p^{g^{k\ell+j}}$,
	where $\zeta_p = e(\frac{1}{p})$ and $g$ denotes a primitive root modulo $p$ \cite{berndt, davenport, thaine}. 
	Clearly $\eta_j $ runs over the same values as $\sigma_X(y)$ when $y \neq 0$, $|A| = d$, and
	$X = A1$ is the $A$-orbit of $1$.
	For composite moduli, the functions 
	$\sigma_X$ attain values which are generalizations of Gaussian periods of the type considered by Kummer
	and others (see \cite{Lehmers}). 

	\begin{figure}[H]
		\centering
		\begin{subfigure}[b]{0.3\textwidth}
	                \centering
	                \includegraphics[width=\textwidth]{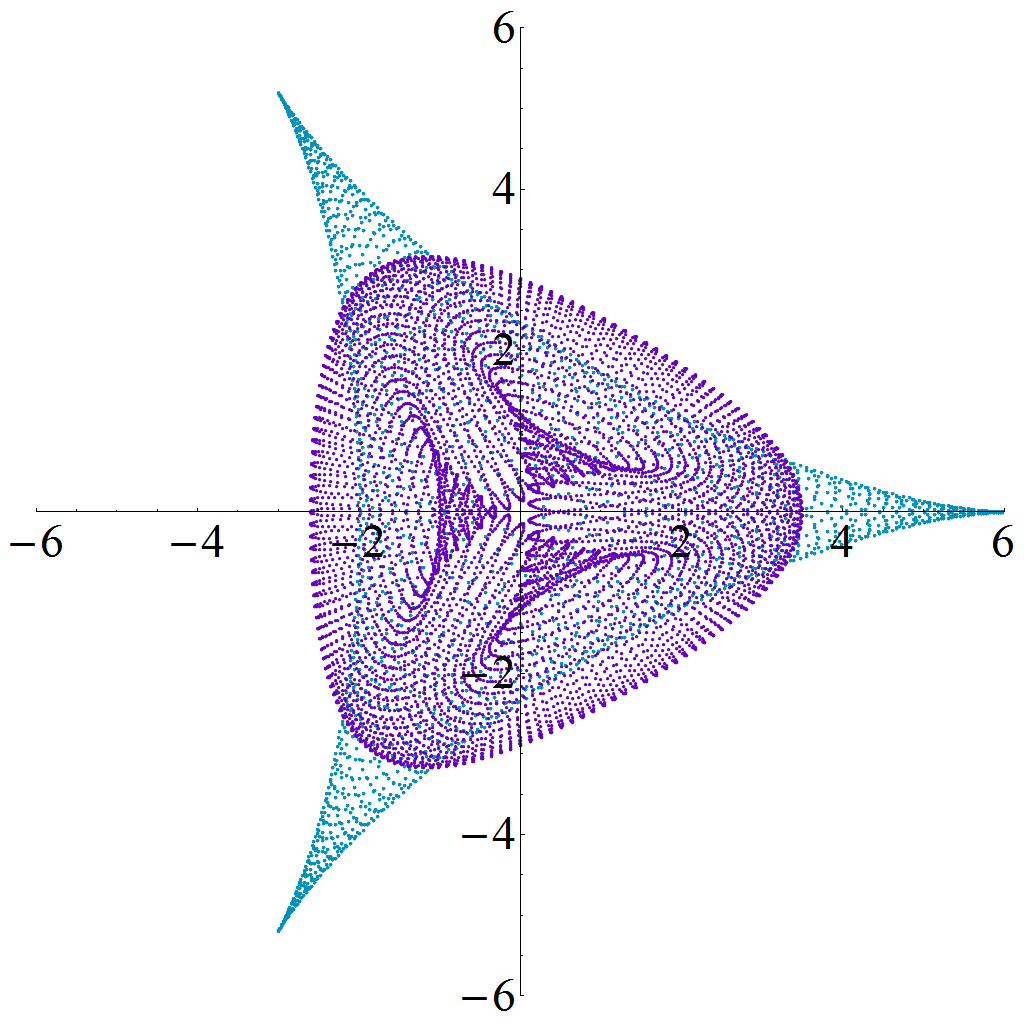}
	                \caption{{\scriptsize $n=52059$, $A=\langle766\rangle$}}
	                \label{fig:52059_766}
	        \end{subfigure}
	        \quad
	        		\begin{subfigure}[b]{0.3\textwidth}
	                \centering
	                \includegraphics[width=\textwidth]{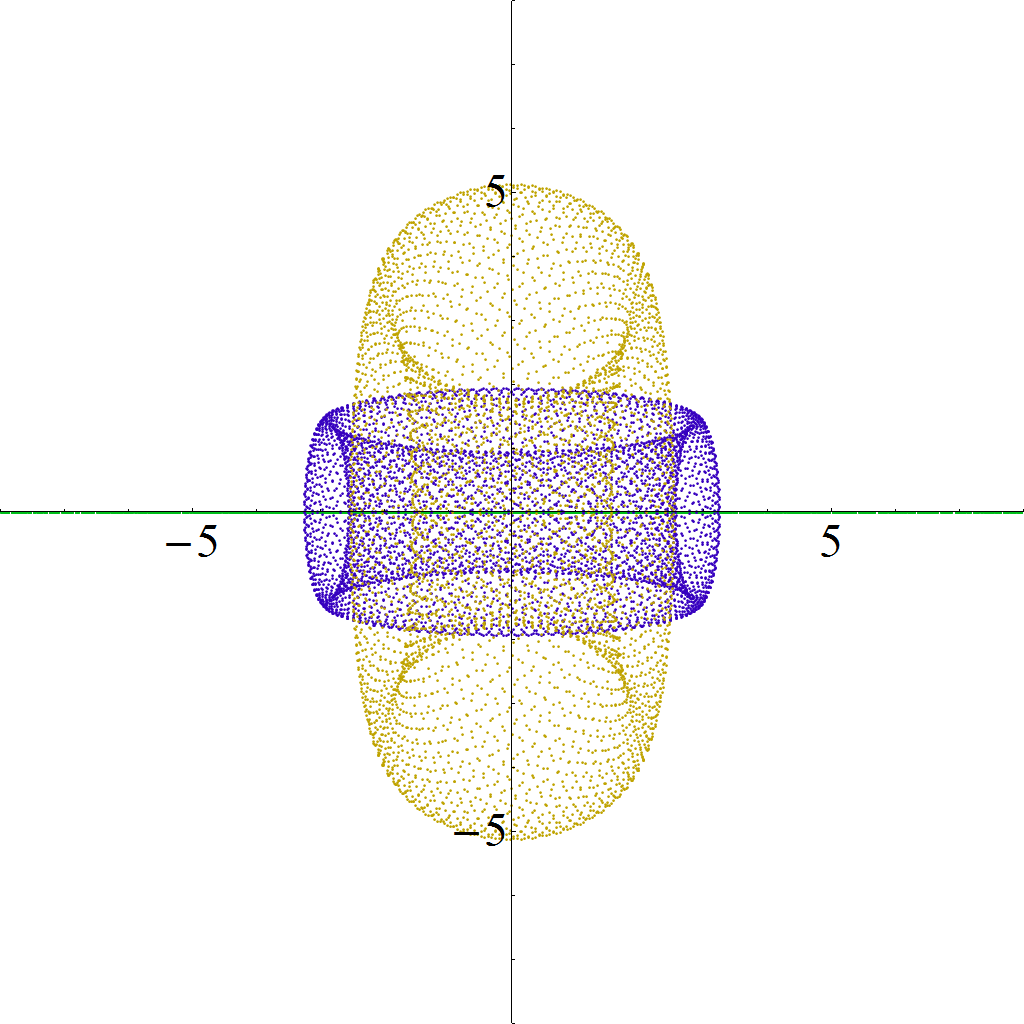}
	                \caption{{\scriptsize $n=91205$, $A=\langle2337\rangle$}}
	                \label{fig:20485_4643}
	        \end{subfigure}
	        \quad
	        \begin{subfigure}[b]{0.3\textwidth}
	                \centering
	                \includegraphics[width=\textwidth]{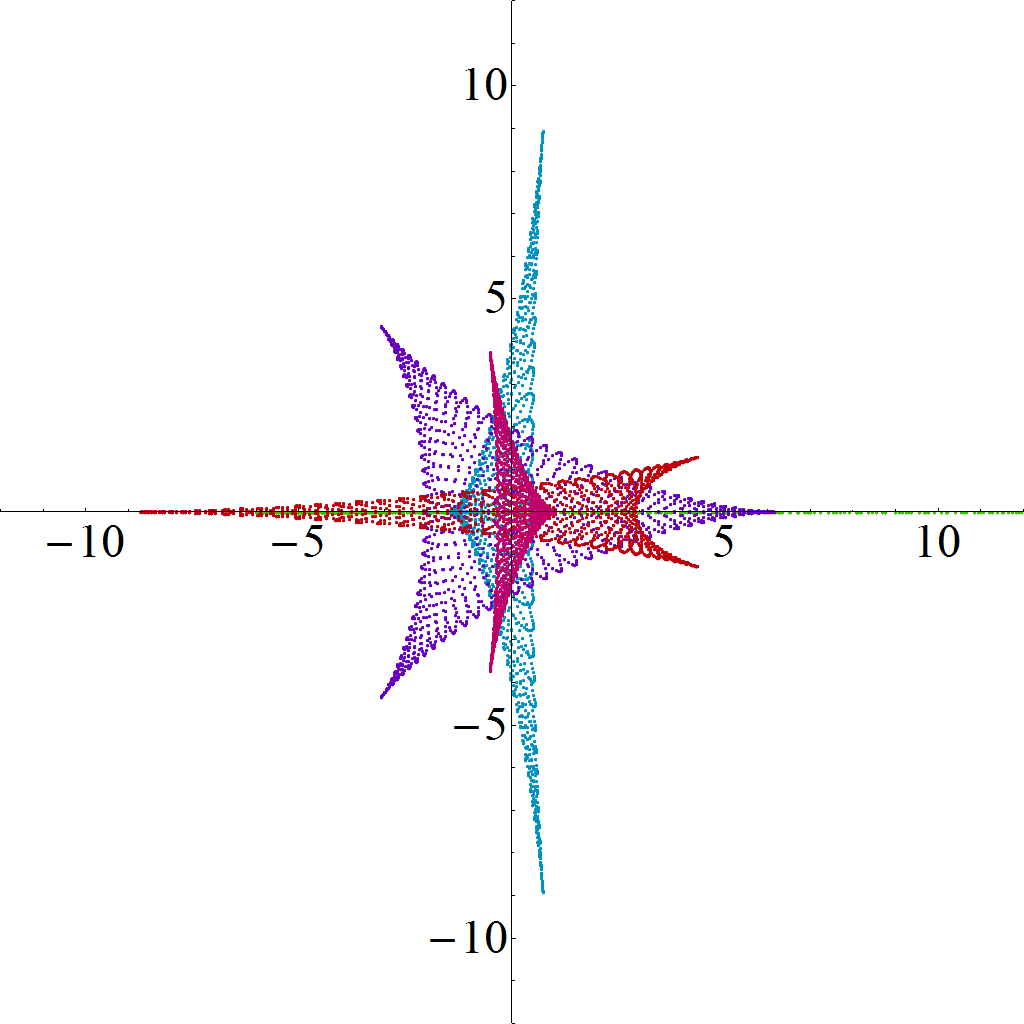}
	                \caption{{\scriptsize $n=70091$, $A=\langle 3447\rangle$}}
	                \label{fig:39711_4283}
	        \end{subfigure}
			\\
	        \begin{subfigure}[b]{0.3\textwidth}
	                \centering
	                \includegraphics[width=\textwidth]{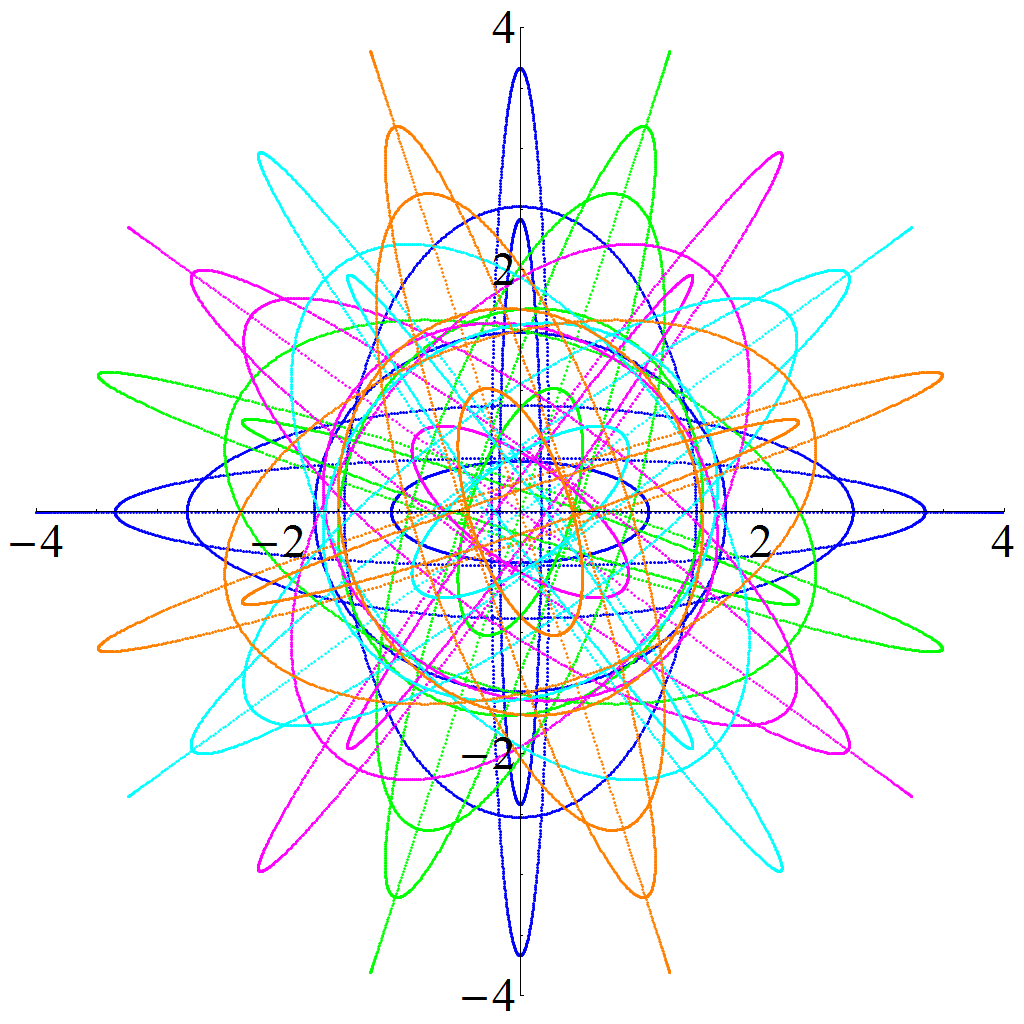}
	                \caption{{\scriptsize $n=91205$, $A=\langle 39626\rangle$}}
	                \label{fig:30030_6313}
	        \end{subfigure}
			\quad
	        \begin{subfigure}[b]{0.3\textwidth}
	                \centering
	                \includegraphics[width=\textwidth]{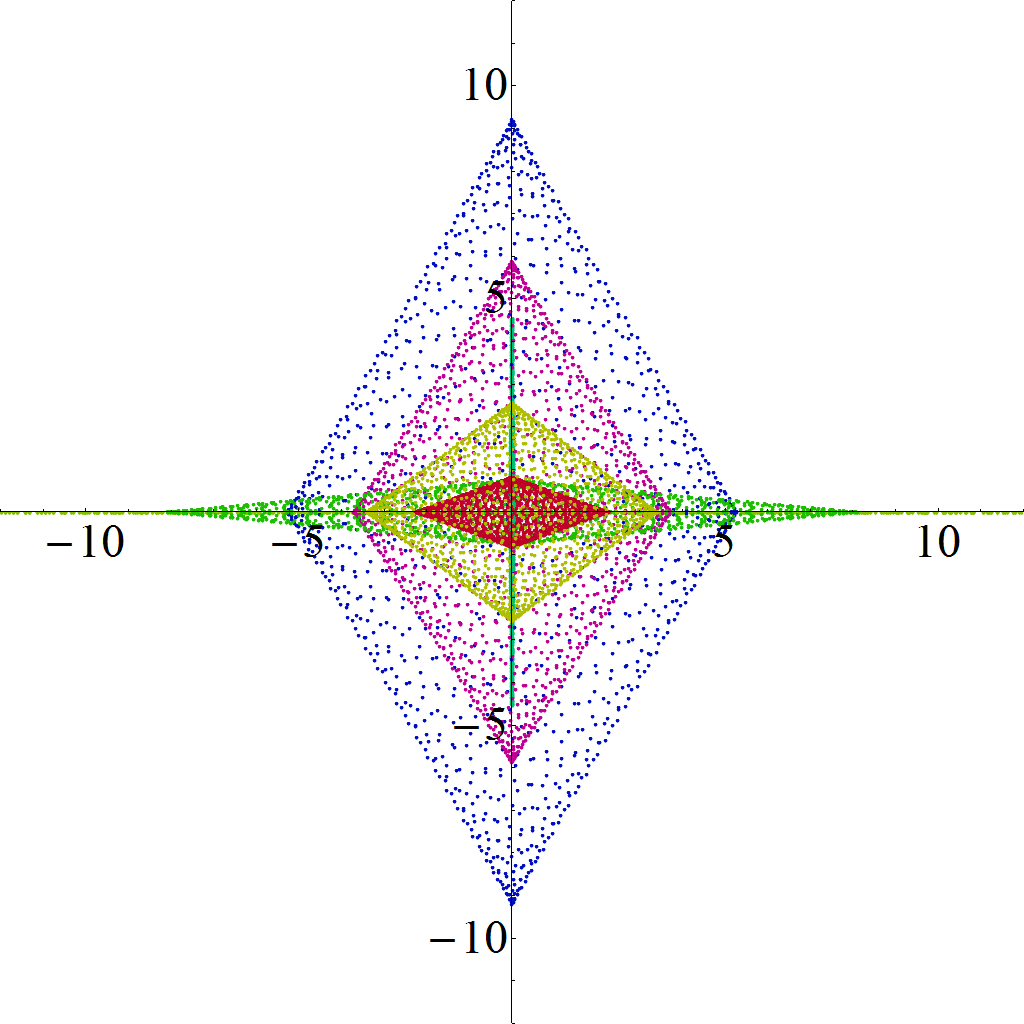}
	                \caption{{\scriptsize $n=91205$, $A=\langle 1322\rangle$}}
	                \label{fig:91205_1322}
	        \end{subfigure}
			\quad
			\begin{subfigure}[b]{0.3\textwidth}
	                \centering
	                \includegraphics[width=\textwidth]{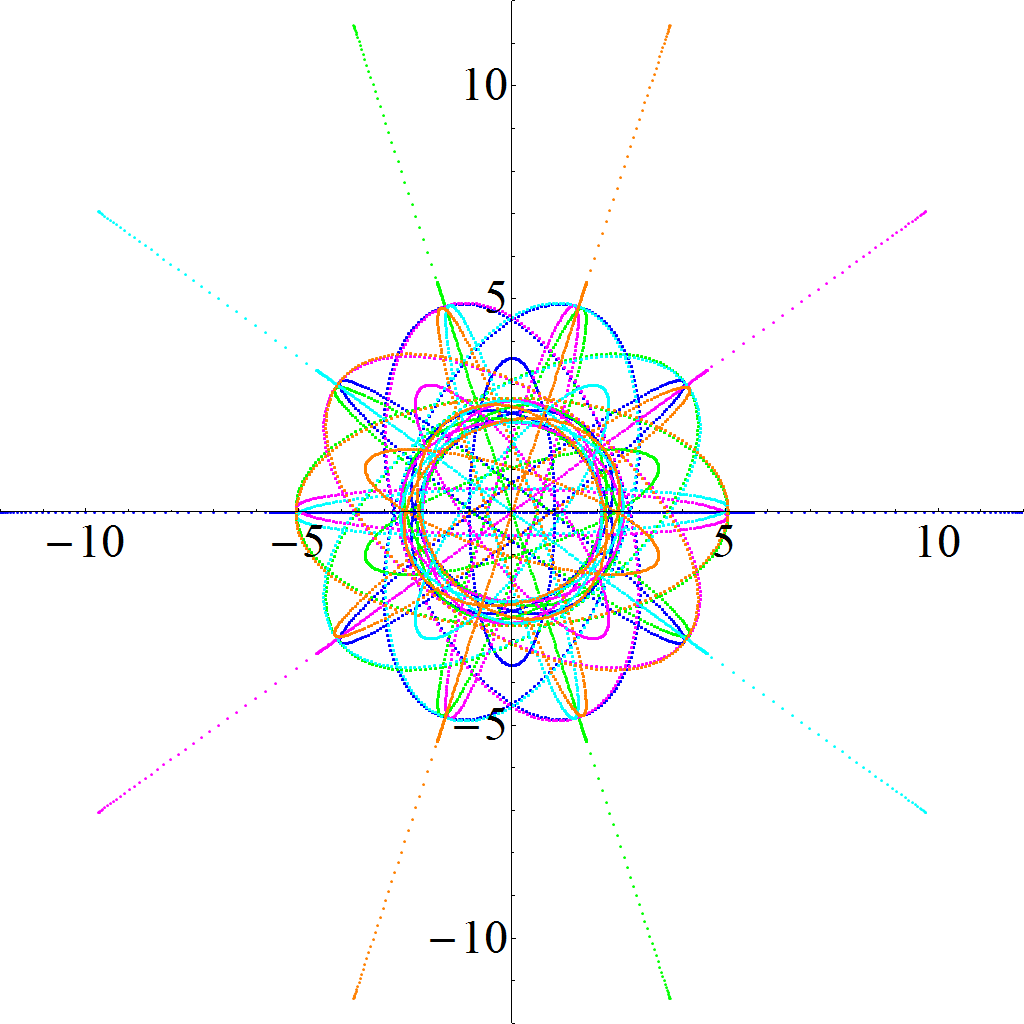}
	                \caption{{\scriptsize $n=95095$, $A=\langle 626\rangle$}}
	                \label{fig:21505_1011}
	        \end{subfigure}
	        \\
	                \begin{subfigure}[b]{0.3\textwidth}
                \centering
                \includegraphics[width=\textwidth]{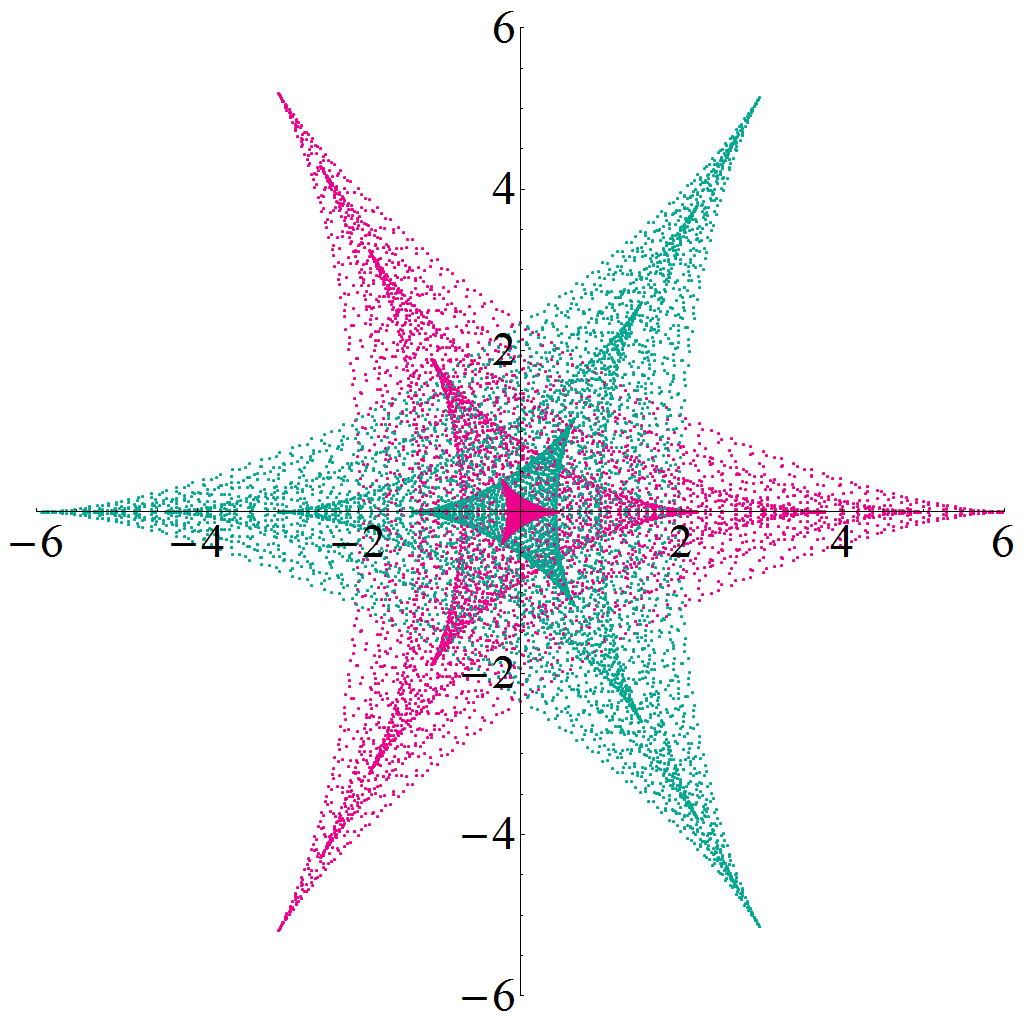}
                \caption{{\scriptsize $n=82677$, $A=\langle 8147\rangle$}}
                \label{fig:39711_9956}
        \end{subfigure}
		\quad
		\begin{subfigure}[b]{0.3\textwidth}
                \centering
                \includegraphics[width=\textwidth]{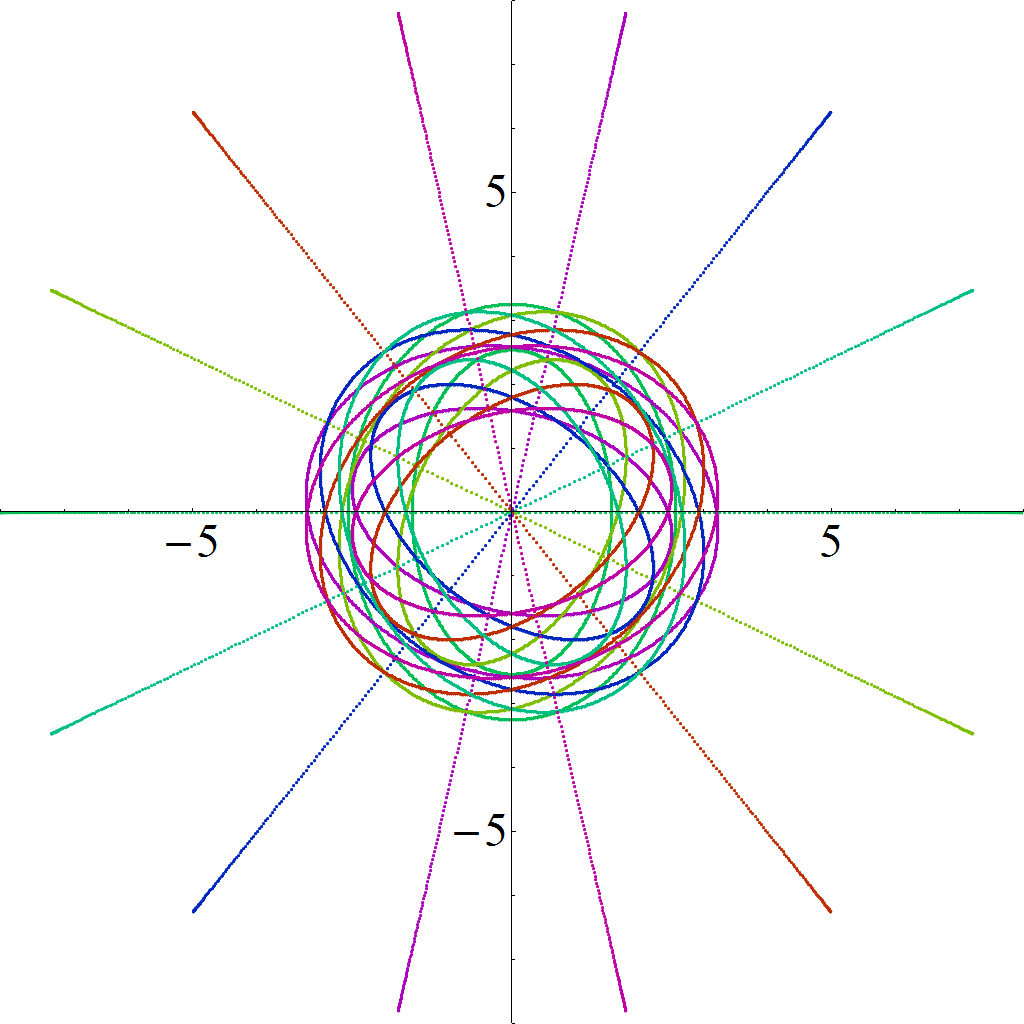}
                \caption{{\scriptsize $n=70091$, $A=\langle21792\rangle$}}
                \label{fig:20485_4113}
        \end{subfigure}
		\quad
		\begin{subfigure}[b]{0.3\textwidth}
                \centering
                \includegraphics[width=\textwidth]{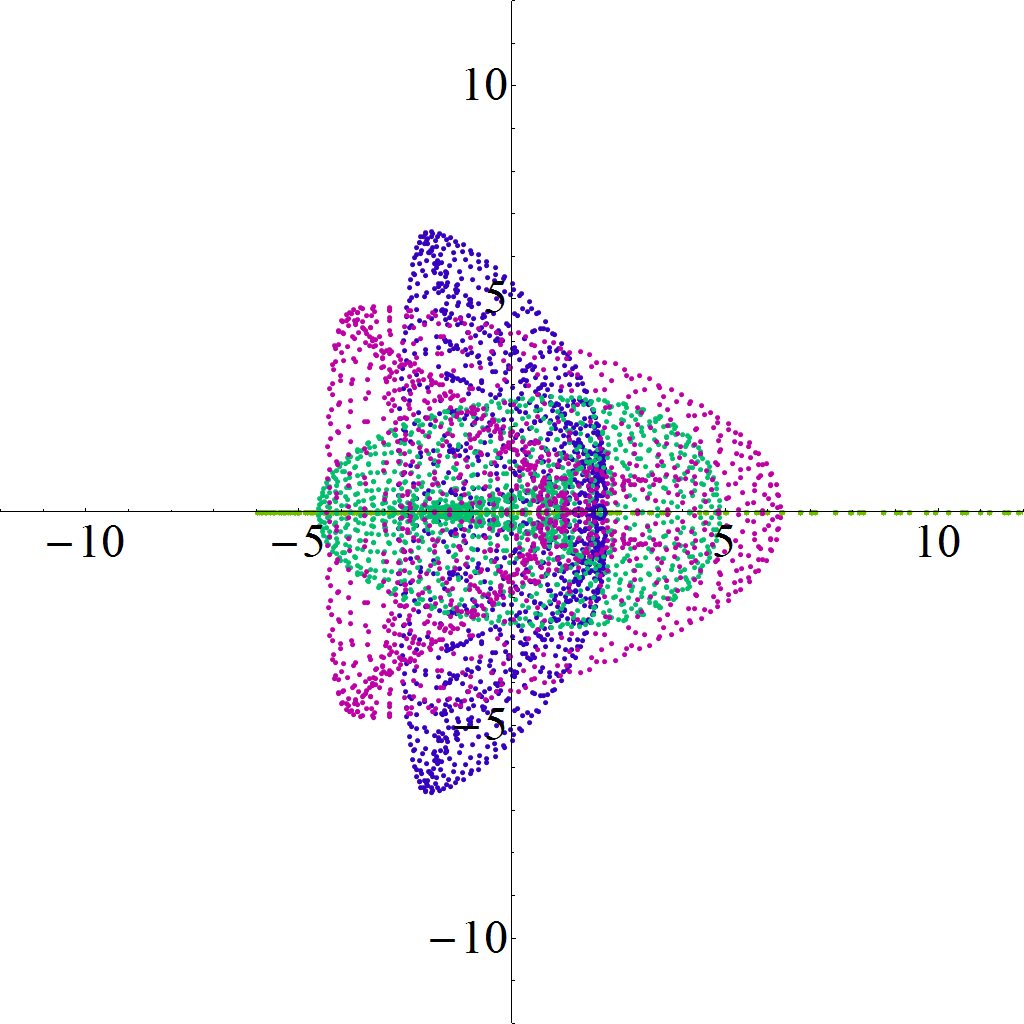}
                \caption{{\scriptsize $n=51319$, $A=\langle 430\rangle$}}
                \label{fig:21385_281}
        \end{subfigure}
	        \caption{Each subfigure is the image of $\sigma_X:\Z/n\Z\to\C$, where $X$ is the orbit of 
	        $1$ under the action of a unit subgroup $A$ on $\Z/n\Z$. If $\sigma_X(y)$ and $\sigma_X(y')$ differ in color, then $y\nequiv y'\pmod{m}$, where $m$ is a certain fixed proper divisor of $n$. Coloring the points $\sigma_X(y)$ according to residue classes of $y$ can reveal hidden structure.}
		\label{fig:nicepics1}
	\end{figure}

	\begin{table}
		\begin{equation*}\footnotesize
		\begin{array}{|c|c|c|c|}
			\hline
			\text{Name} & \text{Expression} & G & A \\
			\hline\hline
			\text{Gauss} & 
			\eta_j = \displaystyle \sum_{\ell=0}^{d-1} e\left( \frac{g^{k\ell+j}}{p}\right)
			& \Z/p\Z & \text{nonzero $k$th powers mod $p$} \\[20pt]
			\text{Ramanujan} & c_n(x)=\displaystyle \sum_{ \substack{ j = 1 \\ (j,n) = 1} }^n \!\!\!\! 
			e\left(\frac{ jx}{n} \right) & \Z/n\Z & (\Z/n\Z)^{\times} \\[20pt]
			\text{Kloosterman} & K_p(a,b)=\displaystyle \sum_{ \ell = 0  }^{p-1} e\left( \frac{a\ell + b \overline{\ell} }{p}\right)
			 & (\Z/p\Z)^2 & \left\{ \minimatrix{u}{0}{0}{u^{-1}}  : u \in (\Z/p\Z)^{\times} \right\} \\[20pt]
			\text{Heilbronn} & \displaystyle H_p(a)=\sum_{\ell=0}^{p-1} e\left(\frac{a \ell^p}{p^2} \right) & \Z/p^2\Z & 
			\footnotesize\text{nonzero $p$th powers mod $p^2$} \\[20pt]
			\hline
		\end{array}
		\end{equation*}	
		\caption{Gaussian periods, Ramanujan sums, Kloosterman sums, and Heilbronn sums 
		appear as supercharacters arising from the action of a subgroup $A$ of $\Aut G$ for a suitable
		abelian group $G$.  Here $p$ denotes an odd prime number.}
		\label{TableNT}
	\end{table}

	When visualized as subsets of the complex plane, the images of these supercharacters
	exhibit a surprisingly diverse range of features (see Figure \ref{fig:nicepics1}).
	The main purpose of this paper is to initiate the investigation of these plots, focusing our
	attention on the case where $A = \langle a \rangle$ is a cyclic subgroup of $(\Z/n\Z)^{\times}$.
	We refer to supercharacers which arise in this manner as \emph{cyclic supercharacters}.

	The sheer diversity of patterns displayed by cyclic supercharacters is overwhelming.  
	To some degree, these circumstances force us to focus our initial efforts on documenting the 
	notable features that appear and on explaining their number-theoretic origins.  
	One such theorem is the following.
		
	\begin{thm}\label{TheoremHypo}
		Suppose that $q$ is an odd prime power and that $\sigma_X$ is a cyclic supercharacter of $\Z/q\Z$. 
		If $|X|=d$ is prime, then the image of $\sigma_X$ is bounded by the $m$-cusped hypocycloid 
		parametrized by $\theta\mapsto(d-1)e^{i\theta} + e^{i(d-1)\theta}$.
	\end{thm}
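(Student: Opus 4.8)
The plan is to express every value $\sigma_X(y)$ as a sum of $d$ unimodular complex numbers whose product is $1$, and then to invoke the classical description of
\[
\mathcal{R}_d \;=\; \bigl\{\, \zeta_1+\cdots+\zeta_d \;:\; \zeta_1,\dots,\zeta_d\in\T,\ \zeta_1\cdots\zeta_d=1 \,\bigr\}
\]
(equivalently, the set of traces of matrices in $\mathrm{SU}(d)$) as the closed region enclosed by the hypocycloid in the statement. Granting this, the theorem reduces to checking that $\sigma_X(y)\in\mathcal{R}_d$ for every $y$.

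First I would dispose of the arithmetic reductions. Fix a generator $a$ of $A$, so that $a$ has multiplicative order $d$ modulo $q=p^k$, and write $X=Ag_0$. For $y=0$ one has $\sigma_X(0)=d$, which is the cusp of the hypocycloid on the positive real axis. For $y\neq 0$, let $p^s=\gcd(g_0y,q)$ and write $g_0y\equiv p^sh\pmod q$ with $p\nmid h$; then
\[
\sigma_X(y) \;=\; \sum_{j=0}^{d-1}\omega^{\,a^{\,j}}, \qquad \omega \;=\; e\!\left(\tfrac{h}{\,p^{\,k-s}\,}\right),
\]
a primitive $p^{\,k-s}$-th root of unity. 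Since $d$ is prime, the residues $a^0,\dots,a^{d-1}$ are either pairwise distinct modulo $p^{\,k-s}$, or else $a\equiv 1$ there and the sum collapses to $d$ times a root of unity that a short computation places at a cusp. In the former case $\sigma_X(y)$ is a sum of $d$ distinct roots of unity with product $\omega^{\,a^0+\cdots+a^{d-1}}$, and this is where primality of $d$ is essential: from $a^d\equiv 1$ we get $(a-1)(a^0+\cdots+a^{d-1})=a^d-1\equiv 0\pmod q$, and because $a$ has order $d>1$ with $d$ prime, $a-1$ is a unit modulo $q$ unless $d=p$; hence $a^0+\cdots+a^{d-1}\equiv 0\pmod{p^{\,k-s}}$, the product of the $\zeta_j$ is $1$, and $\sigma_X(y)\in\mathcal{R}_d$. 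The one remaining case, $d=p$, forces $q$ to be a proper prime power and $a\equiv 1\pmod p$; then the exponents $a^j$ form an arithmetic progression modulo $q$ and a direct computation finishes it.

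The main work is then the geometric lemma, which I expect to be the chief obstacle: $\mathcal{R}_d$ equals the \emph{filled} $d$-cusped hypocycloid. The set $\mathcal{R}_d$ is compact, connected (a continuous image of $\mathrm{SU}(d)$), symmetric about the real axis, and invariant under rotation by $d$-th roots of unity, so it is enough to identify its boundary. I would fix a direction $\phi$ and maximize
\[
\operatorname{Re}\!\bigl(e^{-i\phi}(\zeta_1+\cdots+\zeta_d)\bigr) \;=\; \sum_{j=1}^{d}\cos(\theta_j-\phi)
\]
over $(\theta_1,\dots,\theta_d)$ subject to $\theta_1+\cdots+\theta_d\equiv 0\pmod{2\pi}$. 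The Lagrange condition forces $\sin(\theta_j-\phi)$ to be independent of $j$, so each $\theta_j-\phi$ takes one of two supplementary values; say $r$ of them equal $\psi$ and the other $d-r$ equal $\pi-\psi$. Substituting back into the constraint and simplifying, the configurations with $r\in\{1,d-1\}$ trace out precisely the curve $\theta\mapsto(d-1)e^{i\theta}+e^{-i(d-1)\theta}$, the $d$-cusped hypocycloid of the statement, while all other values of $r$ give critical points lying strictly inside it; connectedness of $\mathcal{R}_d$ then forces it to be the entire region this curve bounds. The delicate step is exactly this last identification --- showing that the ``$r=d-1$'' family furnishes the genuine outer boundary and that the enclosed (non-convex) region is completely filled in; the arithmetic reductions above are, by contrast, routine bookkeeping with $p$-adic valuations.
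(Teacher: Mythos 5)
Your reduction of the theorem to the statement that
\begin{equation*}
\mathcal{R}_d=\bigl\{\zeta_1+\cdots+\zeta_d : \zeta_j\in\T,\ \zeta_1\cdots\zeta_d=1\bigr\}
\end{equation*}
is the filled $d$-cusped hypocycloid is exactly the paper's reduction: Theorem \ref{TheoremMain}, specialized to prime $d$ (where $\Phi_d(t)=1+t+\cdots+t^{d-1}$ gives $b_{k,j}=\delta_{kj}$ and $b_{d-1,j}=-1$), exhibits the image of $\sigma_X$ inside the image of $g(z_1,\dots,z_{d-1})=z_1+\cdots+z_{d-1}+(z_1\cdots z_{d-1})^{-1}$ on $\T^{d-1}$, which is precisely $\mathcal{R}_d$; your congruence $\sum_{j=0}^{d-1}a^j\equiv 0$ is a clean elementary route to the same place. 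The genuine gap is in your geometric lemma, which the paper handles by citing Kaiser rather than proving. Maximizing $\operatorname{Re}\bigl(e^{-i\phi}(\zeta_1+\cdots+\zeta_d)\bigr)$ over the constraint surface computes the support function of $\mathcal{R}_d$ and therefore can only bound $\mathcal{R}_d$ by its \emph{convex hull}; since the filled $d$-cusped hypocycloid is not convex for $d\geq 3$ (its arcs bow inward between cusps, e.g.\ the deltoid sits well inside the triangle on its cusps), this argument cannot yield the containment the theorem asserts. What is actually needed is the critical-value analysis you only gesture at: the boundary of the image of the smooth map $\T^{d-1}\to\C$ lies in its set of critical values, the critical points are exactly the configurations in which the $\theta_j$ take at most two distinct values, and one must then prove that among the resulting one-parameter families of critical values the $r\in\{1,d-1\}$ family is the outermost and that it traces the stated hypocycloid. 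You correctly identify this as ``the delicate step'' but do not supply it, so the containment is not established.

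A second, smaller problem is your disposal of the case $d=p$, which you claim ``a direct computation finishes.'' The direct computation in fact goes the other way: if $q=p^k$ with $k\geq 3$ and $|X|=p$, then $A=\{1+jp^{k-1}: 0\le j<p\}$ and $\sigma_X(y)=p\,e(y/p^k)$ whenever $p\mid y$, so the image contains points of modulus $p$ that are not $p$th roots of unity scaled by $p$; these lie strictly outside the filled $p$-cusped hypocycloid, which meets the circle $|z|=p$ only at its cusps. This case is silently excluded in the paper, since Theorem \ref{TheoremMain} assumes $d\mid p-1$; your write-up should either impose that hypothesis or record that the statement fails without it, rather than assert that the arithmetic progression case works out.
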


	In fact, for a fixed prime $m$, as the modulus $q\equiv 1 \pmod{d}$ tends to infinity the corresponding
	supercharacter images become dense in the filled hypocycloid in a sense that will be made precise
	in Section \ref{SectionAsymptotic}.

	\begin{figure}[h]
		\centering
	        \begin{subfigure}[b]{0.30\textwidth}
	                \centering
	                \includegraphics[width=\textwidth]{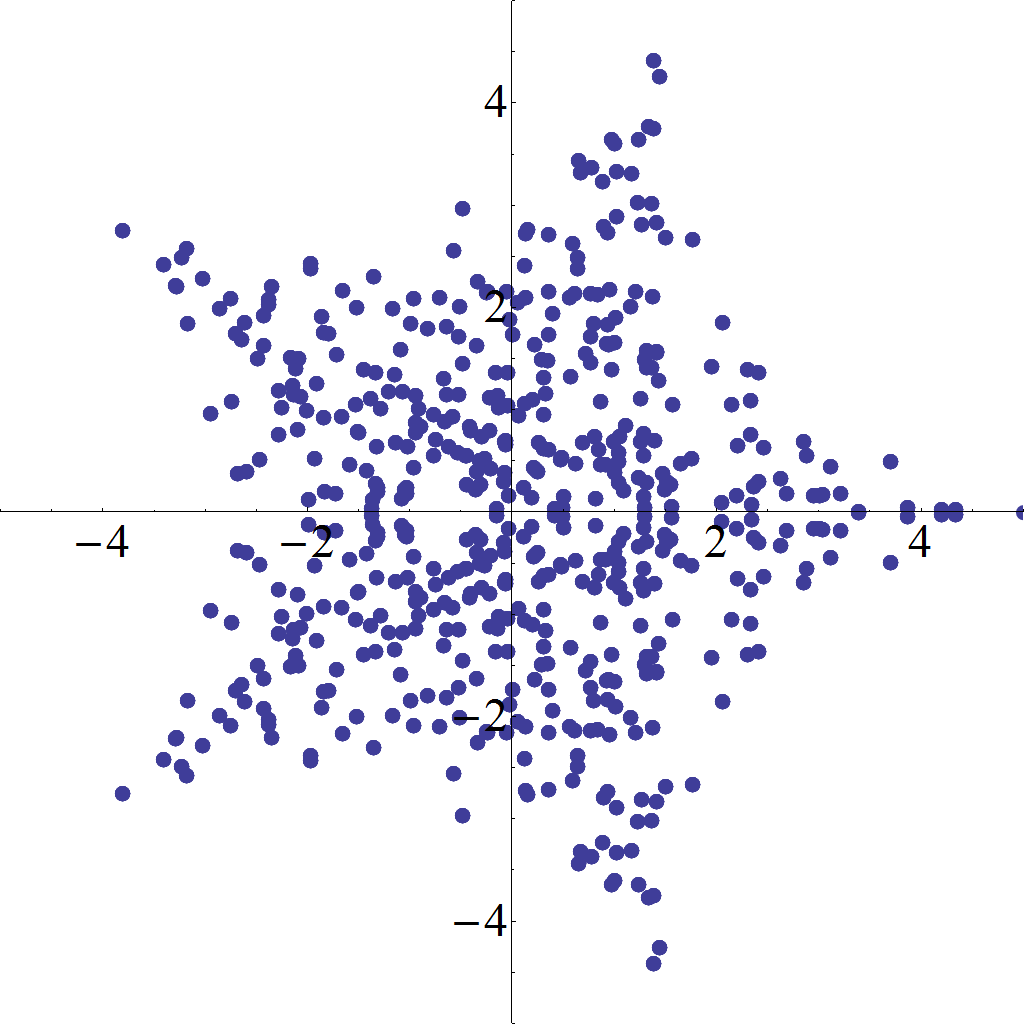}
	                \caption{{\scriptsize $p=2791$, $A=\langle 800\rangle$}}
	        \end{subfigure}
	        \quad
	        \begin{subfigure}[b]{0.30\textwidth}
	                \centering
	                \includegraphics[width=\textwidth]{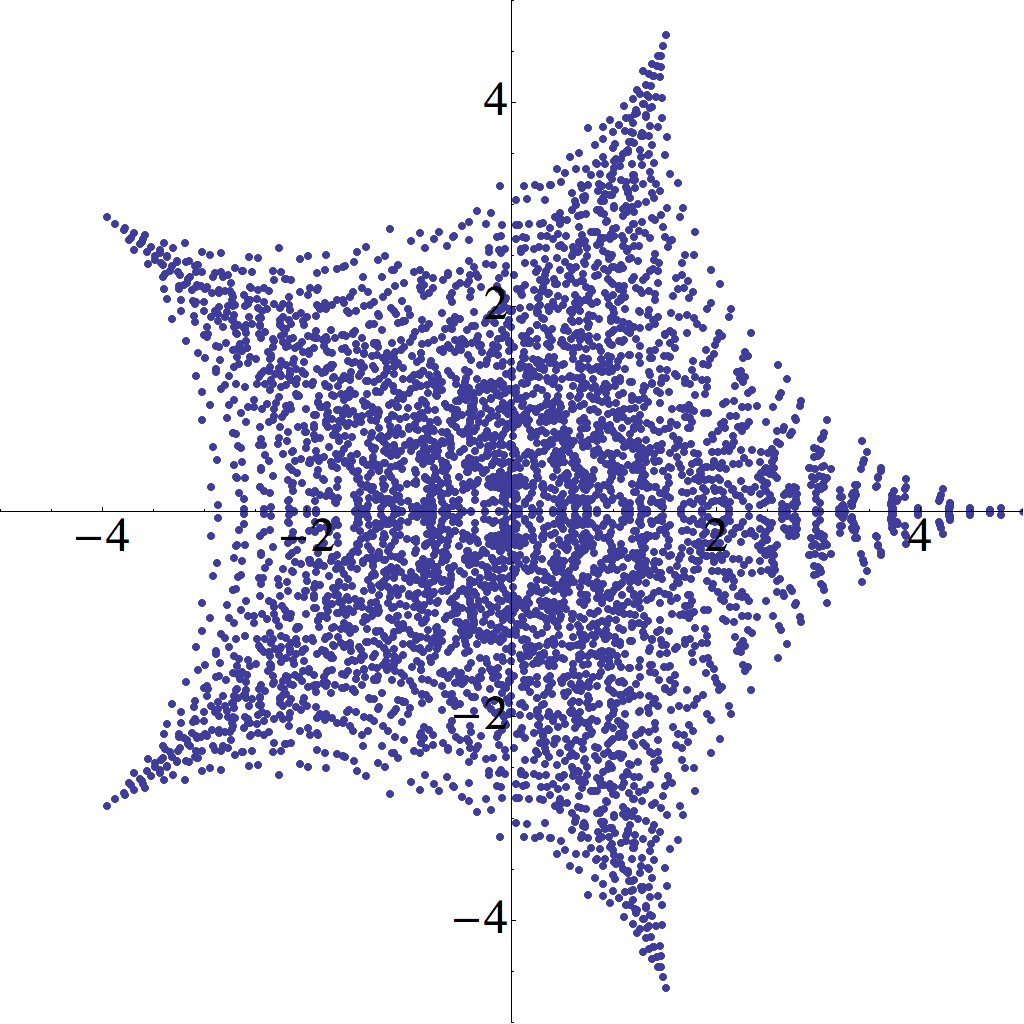}
	                \caption{{\scriptsize $p=27011$, $A=\langle 9360\rangle$}}
	        \end{subfigure}
		\quad
	        \begin{subfigure}[b]{0.30\textwidth}
	                \centering
	                \includegraphics[width=\textwidth]{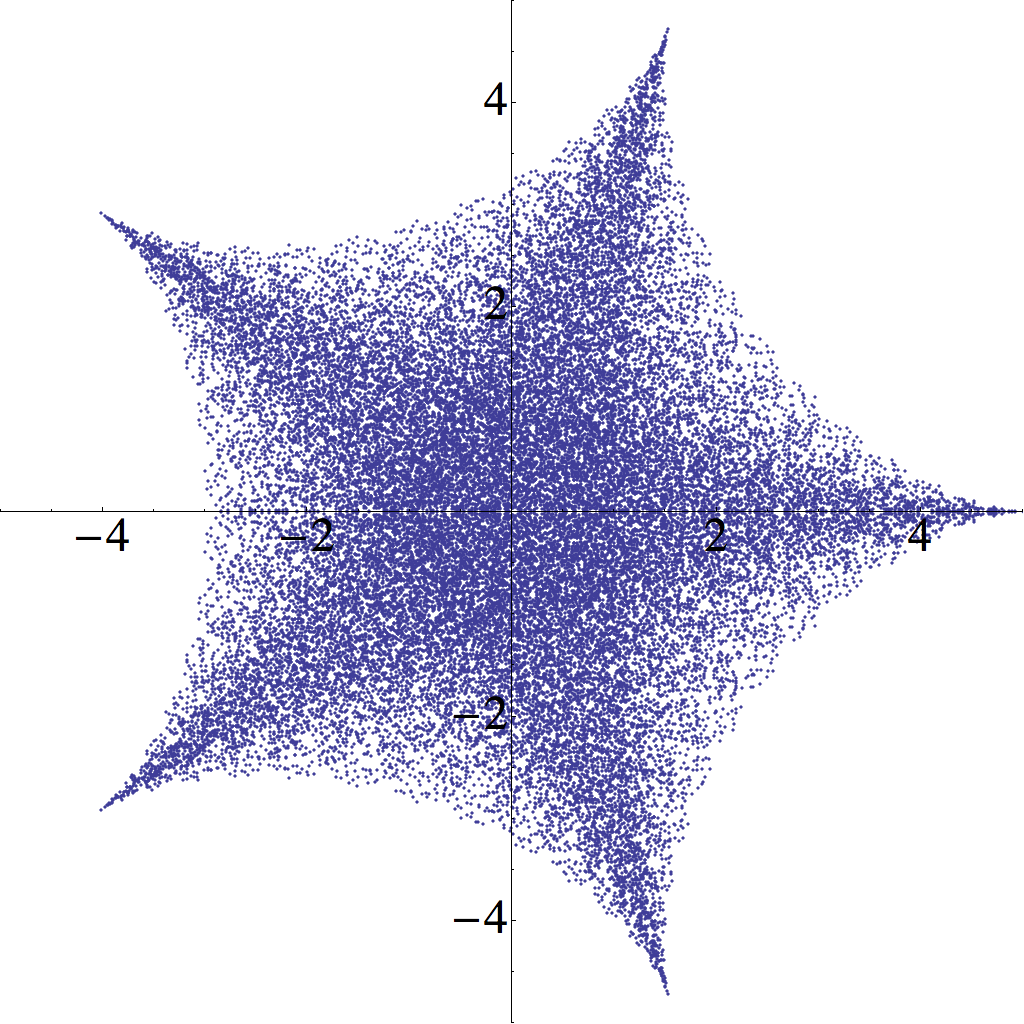}
	                \caption{{\scriptsize $p=202231$, $A=\langle 61576\rangle$}}
	        \end{subfigure}
	        \caption{Graphs of cyclic supercharacters $\sigma_X$ of $\Z/q\Z$, where $X=A1$, showing the density of hypocycloids as $q\to\infty$.}
	\end{figure}

	The preceding theorem is itself a special case of a much more general theorem
	(Theorem \ref{TheoremMain}) which relates the asymptotic behavior of 
	cyclic supercharacter plots to the mapping properties of certain multivariate Laurent polynomials,
	regarded as complex-valued functions on a suitable, high-dimensional torus.

\section{Multiplicativity and nesting plots}\label{SectionNesting}

	Our first order of business is to determine when and in what manner the image of one cyclic supercharacter plot can appear in another. Certain cyclic supercharacters have a naturally multiplicative structure. When combined with Proposition \ref{primescale} and the discussion in Section \ref{SectionAsymptotic}, the following result provides a complete picture of the boundaries of these supercharacters. Following the introduction, we let $X = Ar$ denote the orbit of $r$ in $\Z/n\Z$ under the action of a cyclic unit subgroup $A$.

\begin{thm}\label{thm-mult}
Let $\sigma_X$ be a cyclic supercharacter of $\Z/n\Z$, writing $n=\prod_{j=1}^k p_j^{a_j}$ in standard form and $X=\langle\omega\rangle r$. For each $j$, let $\psi_j:\Z/n\Z\to\Z/p_j^{a_j}\Z$ be the natural homomorphism, let $x_j$ be the multiplicative inverse of $n/p_j^{a_j}\pmod{p_j^{a_j}}$, and write $X_j = \langle \psi_j(\omega)\rangle x_j\psi_j(r)$. If the orbit sizes $|X_j|$ are pairwise coprime, then
$$\sigma_X(y) = \prod_{j=1}^k \sigma_{X_j}(\psi_j(y)).$$
\begin{proof}
We prove the theorem for $n=p_1p_2$ a product of distinct primes; the general argument is similar. Let $\psi=(\psi_1,\psi_2)$ be the ring isomorphism given by the Chinese Remainder Theorem, and let $d=|X|$, $d_1=|\psi_1(X)|$ and $d_2=|\psi_2(X)|$. We have
\begin{align*}
\sigma_{X_1}(\psi_1(y))\sigma_{X_2}(\psi_2(y))
&=\sum_{j=0}^{d_1-1} e\left(\frac{\psi_1(\omega^jry) x_1}{p_1}\right)\sum_{k=0}^{d_2-1} e\left(\frac{\psi_2(\omega^k ry) x_2}{p_2}\right)\\
&=\sum_{j=0}^{d_1-1}\sum_{k=0}^{d_2-1} e\left(\frac{\psi_1(\omega^jry) x_1}{p_1}+\frac{\psi_2(\omega^k ry) x_2}{p_2}\right)\\
&=\sum_{j=0}^{d_1-1}\sum_{k=0}^{d_2-1}e\left(\frac{\psi^{-1}(\phi_1(\omega^j ry),\psi_2(\omega^k ry))}{n}\right)\\
&=\sum_{j=0}^{d_1-1}\sum_{k=0}^{d_2-1}e\left(\frac{\psi^{-1}(\phi_1(\omega)^j,\psi_2(\omega)^k)ry}{n}\right)\\
&=\sum_{\ell=0}^{d-1}e\left(\frac{\omega^\ell ry}{n}\right)\\
&=\sigma_X(y).\qedhere
\end{align*}
\end{proof}
\end{thm}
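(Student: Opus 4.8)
The plan is to apply the Chinese Remainder Theorem in two complementary ways: once to factor the exponential $e(z/n)$ into exponentials at the prime-power moduli $p_j^{a_j}$, and once to reindex the orbit $X\subseteq\Z/n\Z$ as a product of the orbits $X_j$. Throughout write $\psi=(\psi_1,\dots,\psi_k)\colon\Z/n\Z\to\prod_j\Z/p_j^{a_j}\Z$ for the CRT ring isomorphism and put $d=|X|$, $d_j=|X_j|$; I will establish the $k$-fold identity, the displayed computation being the case $k=2$. Since each $x_j$ is a unit modulo $p_j^{a_j}$, multiplication by $x_j$ is a bijection commuting with the $\langle\psi_j(\omega)\rangle$-action, so $X_j=\{\psi_j(\omega)^{t}x_j\psi_j(r):0\le t<d_j\}$ lists $X_j$ without repetition and $\sigma_{X_j}(\psi_j(y))=\sum_{t=0}^{d_j-1}e\big(x_j\psi_j(\omega^{t}ry)/p_j^{a_j}\big)$. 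Multiplying out the product of these $k$ sums and invoking the reconstruction formula $\psi^{-1}(u_1,\dots,u_k)=\sum_j u_j\,(n/p_j^{a_j})\,x_j$, equivalently $e\big(\psi^{-1}(u_1,\dots,u_k)/n\big)=\prod_j e(u_jx_j/p_j^{a_j})$, collapses $\prod_j\sigma_{X_j}(\psi_j(y))$ into the single $k$-fold sum $\sum_{\ell_1,\dots,\ell_k}e\big(\psi^{-1}(\psi_1(\omega^{\ell_1}ry),\dots,\psi_k(\omega^{\ell_k}ry))/n\big)$.

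Next I would clear the factor $ry$. As $\psi^{-1}$ is a ring homomorphism and $\psi^{-1}(\psi_1(z),\dots,\psi_k(z))=z$, each summand equals $e\big(\psi^{-1}(\psi_1(\omega)^{\ell_1},\dots,\psi_k(\omega)^{\ell_k})\cdot ry/n\big)$. Writing $\alpha_j=\psi^{-1}(1,\dots,1,\psi_j(\omega),1,\dots,1)$, one has pairwise commuting units with $\alpha_1\cdots\alpha_k=\omega$ and $\psi^{-1}(\psi_1(\omega)^{\ell_1},\dots,\psi_k(\omega)^{\ell_k})=\alpha_1^{\ell_1}\cdots\alpha_k^{\ell_k}$, so the claim reduces to
\[
\sum_{\ell_1=0}^{d_1-1}\!\cdots\!\sum_{\ell_k=0}^{d_k-1} e\!\left(\frac{\alpha_1^{\ell_1}\cdots\alpha_k^{\ell_k}\,ry}{n}\right)=\sum_{\ell=0}^{d-1} e\!\left(\frac{\omega^{\ell}ry}{n}\right).
\]
The reindexing is the heart of the matter. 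One first notes that $d_j$ is the order of $\psi_j(\omega)$ acting on $\psi_j(r)$ (again because $x_j$ is a unit); applying $\psi_j$ to $\omega^{d}r=r$ then gives $d_j\mid d$ for every $j$, and since each $\psi_j$ sees a fixed point of $\omega^{\lcm(d_1,\dots,d_k)}$ and $\psi$ is injective, $\omega^{\lcm(d_1,\dots,d_k)}r=r$, so $d=\lcm(d_1,\dots,d_k)$. Under the pairwise-coprimality hypothesis this forces $d=d_1\cdots d_k$, whence CRT on the index set gives a bijection between $\prod_j\Z/d_j\Z$ and $\Z/d\Z$ sending $(\ell_1,\dots,\ell_k)$ to the $\ell$ with $\ell\equiv\ell_j\pmod{d_j}$. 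For such matched indices $\omega^{\ell}r=\alpha_1^{\ell}\cdots\alpha_k^{\ell}r=\alpha_1^{\ell_1}\cdots\alpha_k^{\ell_k}r$: since $\alpha_j^{d_j}$ fixes $r$ one has $\alpha_j^{\ell}r=\alpha_j^{\ell_j}r$, and the exponents may be corrected one factor at a time, commuting each already-corrected $\alpha_i^{\ell_i}$ past the factor being adjusted. This matches the two displayed sums term by term and finishes the $k$-fold identity; for $k=2$ it is precisely the chain of equalities given, and larger $k$ only adds bookkeeping.

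I expect the reindexing step to be the main obstacle, and within it the two arithmetic facts that $d=\lcm(d_1,\dots,d_k)$ (so that coprimality promotes the length-$d$ sum on the right to a genuine $d_1\cdots d_k$-fold sum on the left) and that the ``diagonal'' powers $\alpha_1^{\ell}\cdots\alpha_k^{\ell}$ and the ``mixed'' powers $\alpha_1^{\ell_1}\cdots\alpha_k^{\ell_k}$ sweep the orbit $X$ with identical multiplicities. Everything else---the two CRT identities and the multiplicativity of $\psi^{-1}$---is formal once the normalizing units $x_j$ are placed correctly. The one point to watch is that $r$ need not be a unit of $\Z/n\Z$, so $d_j$ must be read as the orbit size $|X_j|$, equivalently the order of $\psi_j(\omega)$ on $\psi_j(r)$, rather than as the order of $\psi_j(\omega)$ in $(\Z/p_j^{a_j}\Z)^{\times}$.
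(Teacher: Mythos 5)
Your proposal is correct and follows essentially the same route as the paper's proof: factor the exponential via the CRT reconstruction $e\big(\psi^{-1}(u_1,\dots,u_k)/n\big)=\prod_j e(u_jx_j/p_j^{a_j})$, pull out $ry$ using multiplicativity of $\psi^{-1}$, and reindex the multi-sum over $\prod_j \Z/d_j\Z$ as a single sum over $\Z/d\Z$. The only difference is one of completeness: you supply the justification for the reindexing step ($d=\lcm(d_1,\dots,d_k)=d_1\cdots d_k$ under coprimality, plus the coordinatewise matching $\omega^{\ell}r=\alpha_1^{\ell_1}\cdots\alpha_k^{\ell_k}r$) that the paper passes over silently, and you carry out the general $k$-factor case the paper only sketches.
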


	As a consequence of the next result, we observe all possible graphical behavior, up to scaling, 
	by restricting our attention to cases where $r=1$ (i.e., where $X=A$ as sets). We present it without proof.

	\begin{prop}\label{thm:master}
		Let $r$ belong to $\Z/n\Z$, and suppose that $(r,n)=\tfrac{n}{d}$ for some positive divisor $d$ of $n$, 
		so that $\xi=\frac{rd}{n}$ is a unit modulo $n$. Also let $\psi : \Z/n\Z\to\Z/d\Z$ be the natural homomorphism.
		\begin{enumerate}
			\item[(i)] The images of $\sigma_{Ar}$, $\sigma_{A(r,n)}$, and $\sigma_{\psi_d(A)1}$ are equal.
			\item[(ii)] The image in (i), when scaled by $\frac{|A|}{|\psi_d(A)|}$, is a subset of the image of $\sigma_{A\xi}$.
		\end{enumerate}
	\end{prop}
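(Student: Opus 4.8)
The plan is to reduce every supercharacter appearing in the statement to a single supercharacter of $\Z/d\Z$ by an explicit change of variables, after which both parts fall out by comparing images. Write $m=(r,n)=n/d$ and choose a unit $\xi$ of $\Z/n\Z$ representing the class $rd/n$ modulo $d$ (possible because $(\Z/n\Z)^{\times}\twoheadrightarrow(\Z/d\Z)^{\times}$); then $r\equiv m\xi\pmod n$ and $\xi$ is in particular a unit modulo $d$. Set $K=\ker(\psi_d|_A)=\{a\in A:a\equiv 1\pmod d\}$, so that $\psi_d(A)\cong A/K$ and $|K|=|A|/|\psi_d(A)|$.

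The heart of the argument will be the identity
\[
\sigma_{Ar}(y)\;=\;\sum_{b\in\psi_d(A)}e\!\left(\frac{b\,\xi y}{d}\right)\;=\;\sigma_{\psi_d(A)1}(\xi y\bmod d),\qquad y\in\Z/n\Z,
\]
valid because, for $x=ar$ in the orbit $Ar$, one has $e(xy/n)=e(a\xi y/d)$, which depends on $a$ only through $\psi_d(a)$; and because the orbit map $a\mapsto ar$ from $A$ onto the \emph{set} $Ar$ has fibres equal to the cosets of $\Stab_A(r)$, which a one-line computation (using $\gcd(\xi,d)=1$) identifies with $K$. Since $\psi_d(A)1$ is the subgroup $\psi_d(A)$ itself, the right-hand side is a bona fide supercharacter value on $\Z/d\Z$. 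Specializing to $r=m$ (so $\xi=1$) gives the analogous identity for $\sigma_{A(r,n)}$.

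Part (i) then follows immediately: as $y$ runs over $\Z/n\Z$ the quantity $\xi y\bmod d$ runs over all of $\Z/d\Z$, so the image of $\sigma_{Ar}$ equals the image of $\sigma_{\psi_d(A)1}$, and likewise for $\sigma_{A(r,n)}$. For part (ii) I would evaluate $\sigma_{A\xi}$ at arguments of the form $y=mz_0$. Since $\xi$ is a unit of $\Z/n\Z$, the orbit $A\xi$ has $|A|$ distinct points $a\xi$, so a computation parallel to the one above gives
\begin{align*}
\sigma_{A\xi}(mz_0)
&=\sum_{a\in A}e\!\left(\frac{a\xi m z_0}{n}\right)
=\sum_{a\in A}e\!\left(\frac{a\xi z_0}{d}\right)\\
&=|K|\sum_{b\in\psi_d(A)}e\!\left(\frac{b\,\xi z_0}{d}\right)
=\frac{|A|}{|\psi_d(A)|}\,\sigma_{\psi_d(A)1}(\xi z_0\bmod d);
\end{align*}
letting $z_0$ range over $\Z/d\Z$ shows that every value of $\sigma_{\psi_d(A)1}$, scaled by $|A|/|\psi_d(A)|$, occurs among the values of $\sigma_{A\xi}$, which is exactly (ii).

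I expect the only genuine pitfall to be bookkeeping with multiplicities: one must carefully distinguish summing over the set $Ar$ (one summand per orbit point) from summing over the group $A$ (which overcounts by $|\Stab_A(r)|=|K|$), since it is precisely this factor $|K|=|A|/|\psi_d(A)|$ that materializes as the scaling constant in (ii). A lesser point is the interpretation of $\xi$: the literal number $rd/n$ need not be coprime to $n$, so it should be read as a unit lift of its class modulo $d$, which is all the proof actually uses.
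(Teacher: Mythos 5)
Your proof is correct and follows essentially the same route as the paper's own argument: both identify $\Stab_A(r)$ with $\ker(\psi_d|_A)$, collapse the orbit sum over $Ar$ to a sum over $\psi_d(A)$ in $\Z/d\Z$ via the substitution $y\mapsto \xi^{\pm1}y$, and obtain the factor $|A|/|\psi_d(A)|$ in (ii) by evaluating $\sigma_{A\xi}$ at multiples of $n/d$, where the fibres of $\psi_d|_A$ produce the overcounting. Your closing remark that $\xi=rd/n$ need only be a unit modulo $d$ (and should be replaced by a unit lift modulo $n$, which suffices since only $\xi\bmod d$ enters the computation) is a legitimate and worthwhile correction to the statement as written.
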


	\begin{figure}[h]
		\centering
		\begin{subfigure}[b]{0.3\textwidth}
	                \centering
	                \includegraphics[width=\textwidth]{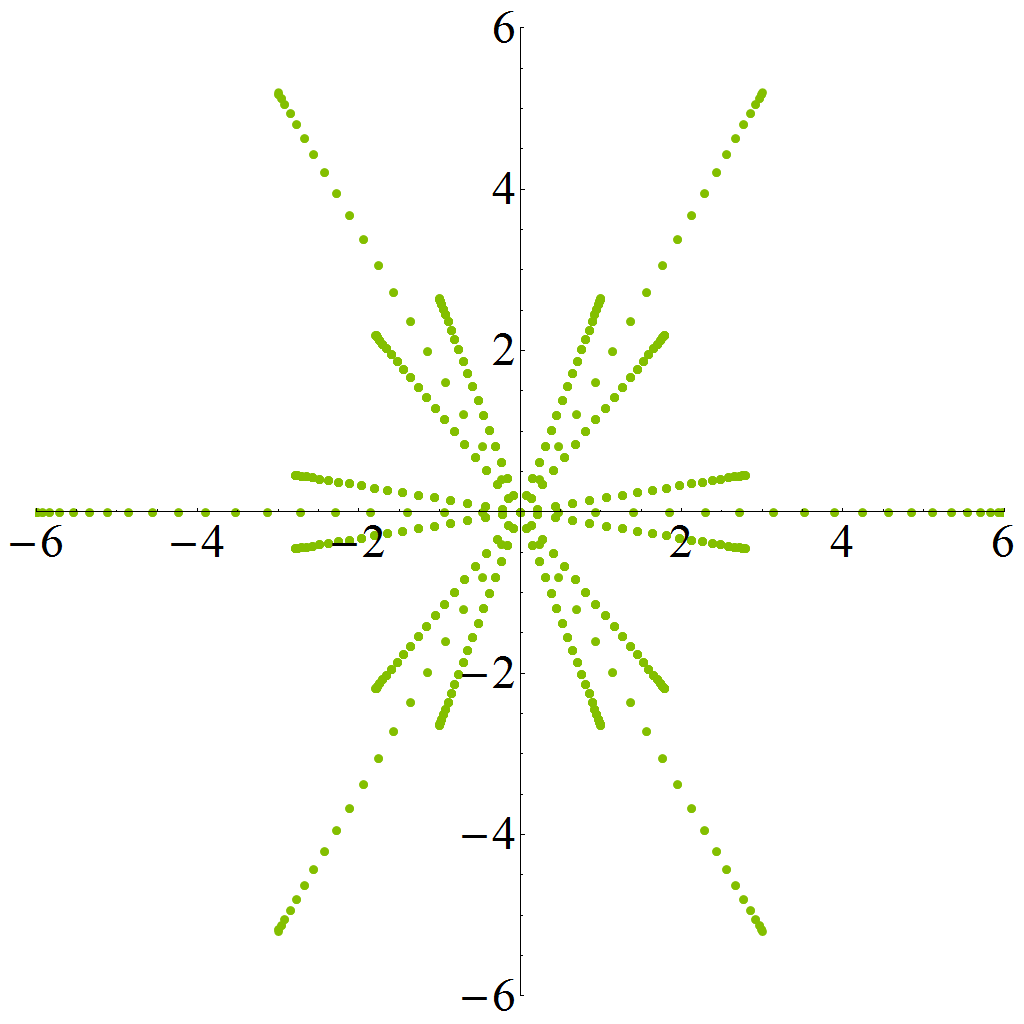}
	                \caption{{\scriptsize $r=37$}}\label{fig:varyingd_2}
	        \end{subfigure}
			\quad
	        \begin{subfigure}[b]{0.3\textwidth}
	                \centering
	                \includegraphics[width=\textwidth]{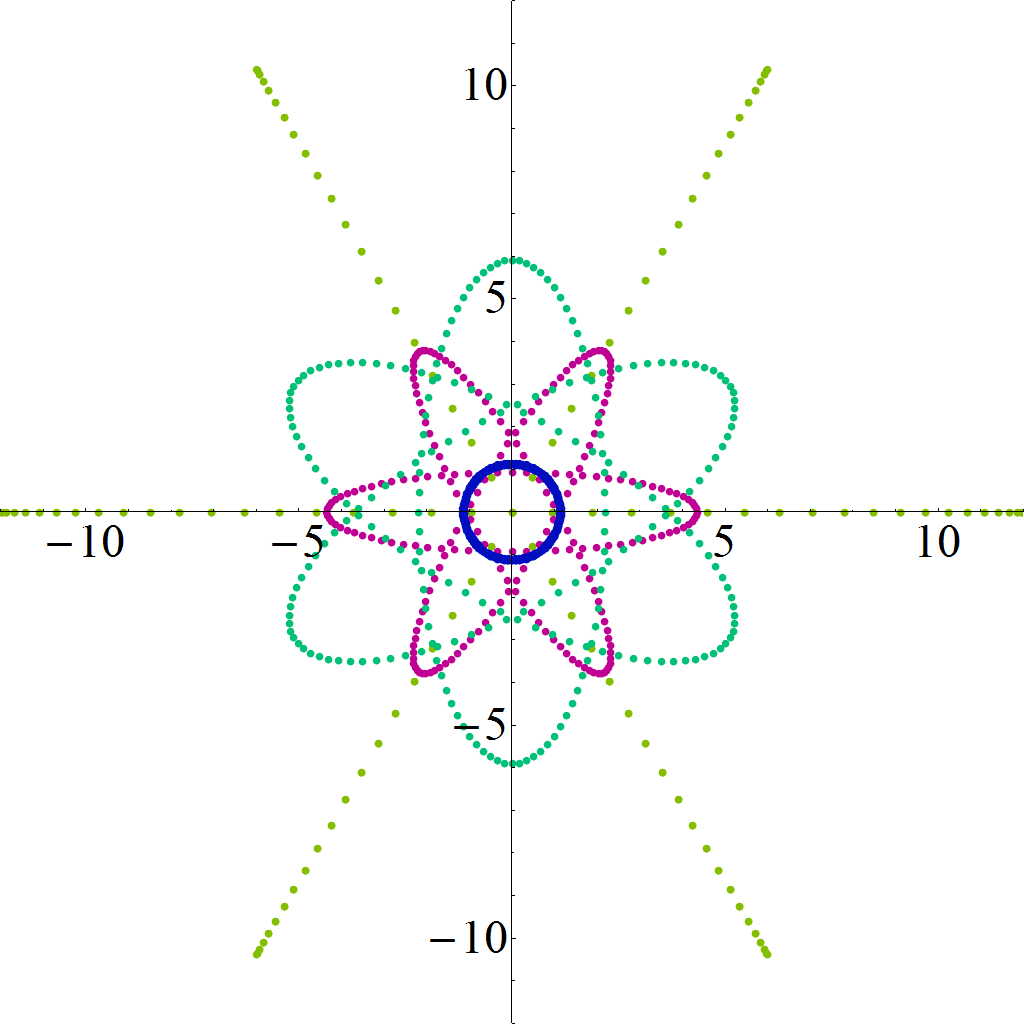}
	                \caption{{\scriptsize $r=7$}}
	                \label{fig:20485_4609}
	        \end{subfigure}
			\quad
	        \begin{subfigure}[b]{0.3\textwidth}
	                \centering
	                \includegraphics[width=\textwidth]{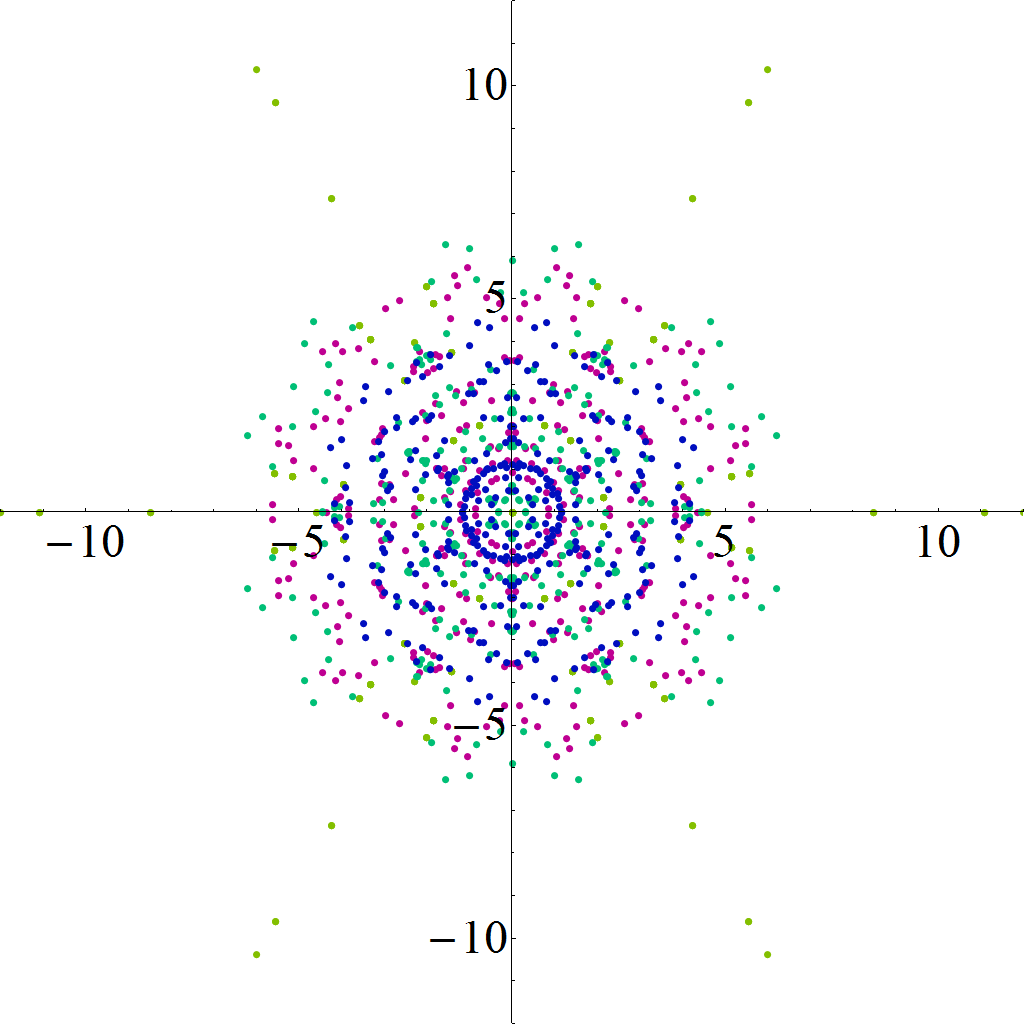}
	                \caption{{\scriptsize $r=5$}}
	                \label{fig:68913_88}
	        \end{subfigure}
			\\
			\begin{subfigure}[b]{0.3\textwidth}
	                \centering
	                \includegraphics[width=\textwidth]{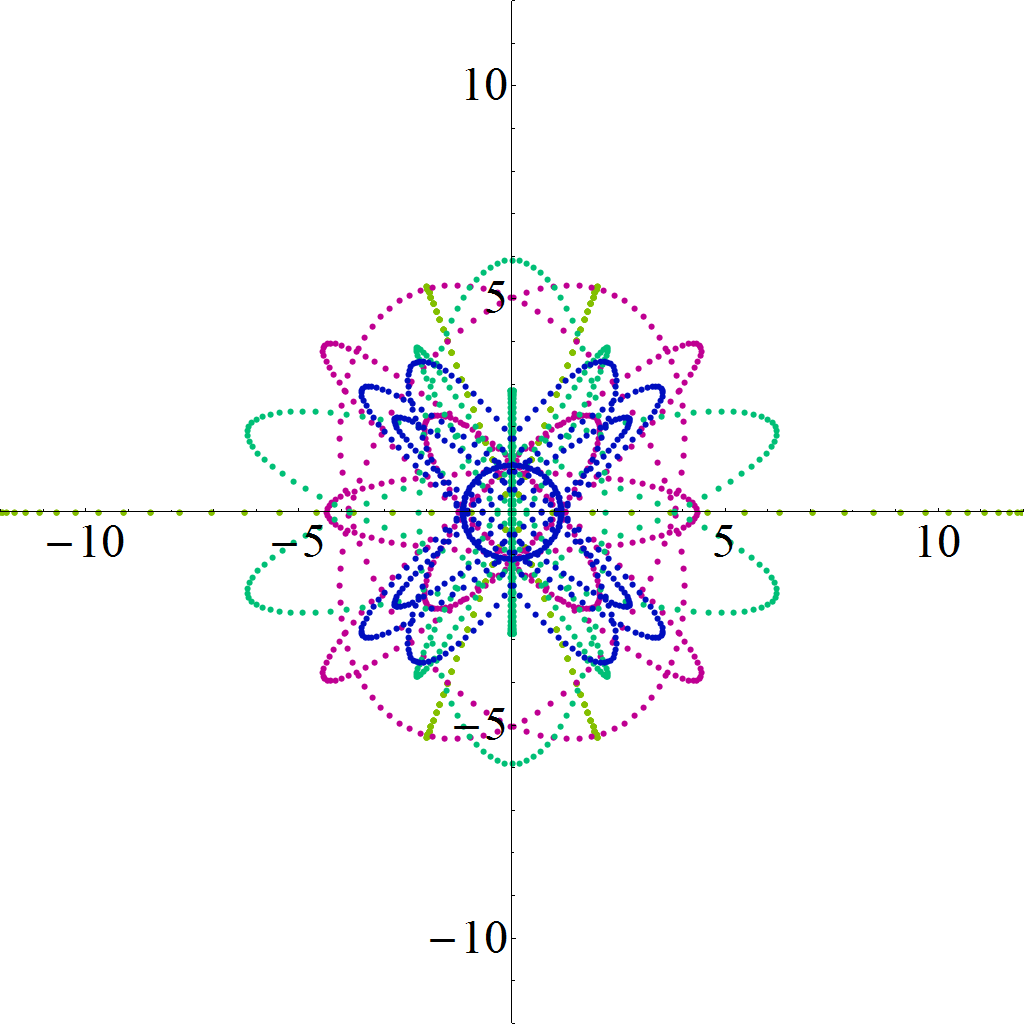}
	                \caption{{\scriptsize $r=3$}}
	                \label{fig:d20720}
	        \end{subfigure}
			\quad
	        \begin{subfigure}[b]{0.3\textwidth}
	                \centering
	                \includegraphics[width=\textwidth]{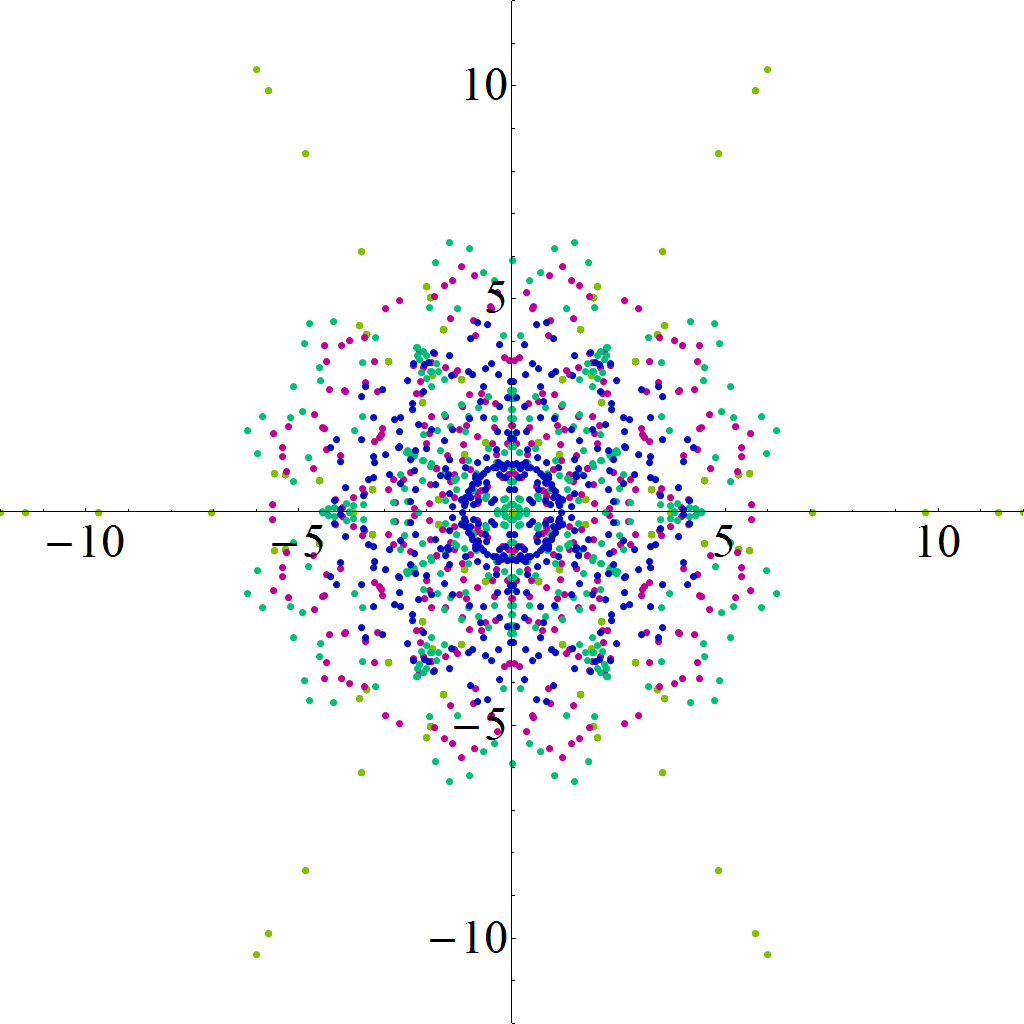}
	                \caption{{\scriptsize $r=4$}}
	                \label{fig:20485_4609}
	        \end{subfigure}
			\quad
			\begin{subfigure}[b]{0.3\textwidth}
	                \centering
	                \includegraphics[width=\textwidth]{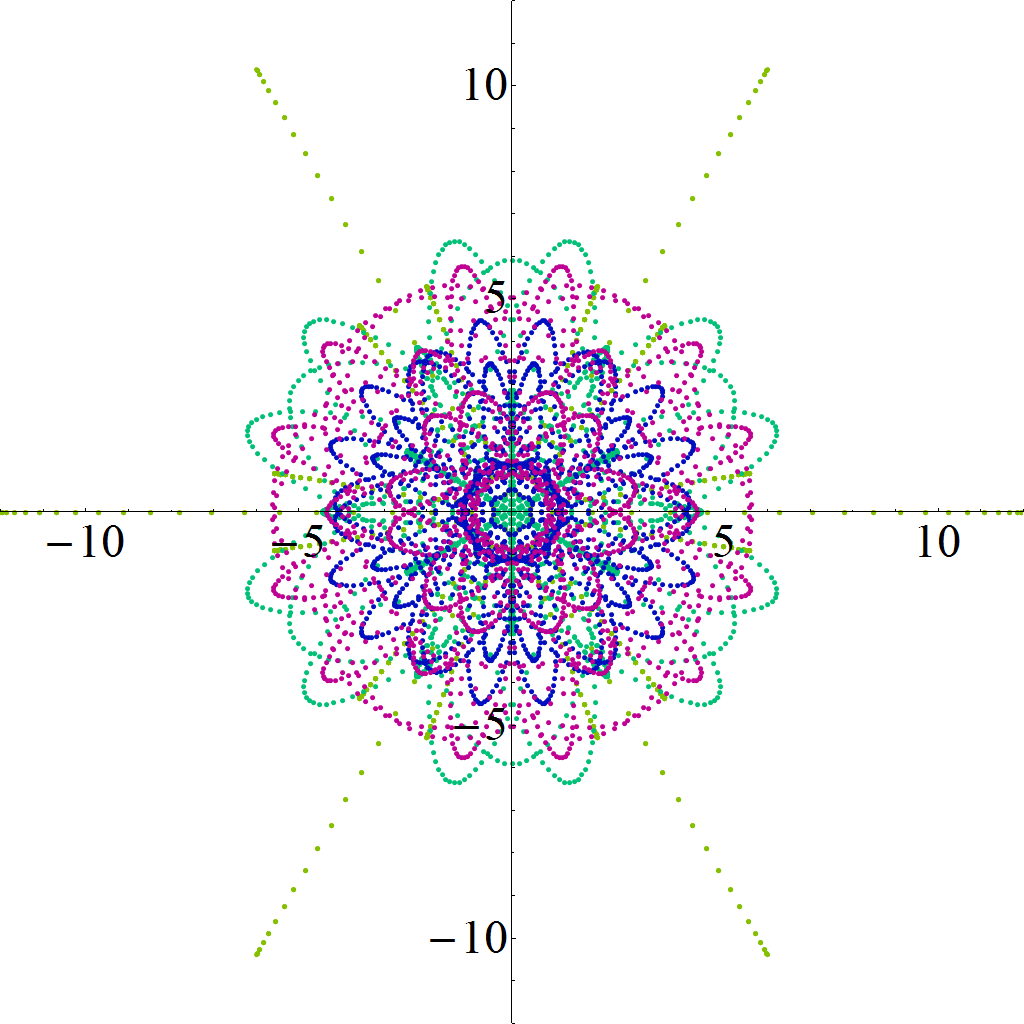}
	                \caption{{\scriptsize $r=1$}}
	                \label{fig:d62160}
	        \end{subfigure}
	        \caption{Graphs of cyclic supercharacters $\sigma_X$ of $\Z/62160\Z$, 
	        where $X=\langle 319\rangle r$. Each image nests in Figure \ref{fig:d62160}, as per Proposition \ref{thm:master}(ii). See Figure \ref{fig:nicepics1} for a brief discussion of colorization.}
	\label{fig:varyingd2}
	\end{figure}

	\begin{example}
		Let $n=62160=2^4\cdot3\cdot5\cdot7\cdot37$. Each plot in Figure \ref{fig:varyingd2} displays 
		the image of a different cyclic supercharacter $\sigma_X$, where $X=\langle319\rangle r$. 
		If $d=r/(n,r)$, then Proposition \ref{thm:master}(i) says that each image equals that of a cyclic supercharacter $\sigma_{X'}$ of $\Z/d\Z$, where 
		$X'=\langle\psi_d(319)\rangle1$. 
		Proposition \ref{thm:master}(ii) says that each nests in the image in Figure \ref{fig:d62160}.
	\end{example}

In part because of Theorem \ref{thm-mult}, we are especially interested in cyclic supercharacters with prime power moduli. The following result implies that the image of any cyclic supercharacter of $\Z/p^a\Z$ is essentially a scaled copy of one whose boundary is given by Theorem \ref{TheoremMain}.

\begin{prop}\label{primescale}
Let $p$ be an odd prime, $a>b$ nonnegative integers, and $\psi$ the natural homomorphism from $\Z/p^a\Z$ to $\Z/p^{a-b}\Z$. If $\sigma_X$ is a cyclic supercharacter of $\Z/p^{a-b}\Z$, where $X=A1$ with $p^b\divides|X|$ and $p^{a-b}\equiv 1\pmod{|\phi(X)|}$, then
$$\sigma_X(\Z/p^a\Z)=\{0\}\cup p^b\sigma_{\phi(X)}(\Z/p^{a-b}\Z).$$

\begin{proof}
Let $k$ be a positive divisor of $p-1$. If $|X|=kp^b$, then $A=\psi^{-1}(A')$, where $A'$ is the unique subgroup of $(\Z/p^{a-b}\Z)^\times$ of order $k$. Let $X'=A'1$ (i.e., $X'=\psi(X)$), so that
$$X=\{x+jp^{a-b} : x\in X',\, j=0,1,\ldots,p^b-1\}.$$
We have
\begin{align*}
\sigma_X(y)&=\sum_{x\in X'} \sum_{j=0}^{p^b-1} e\left(\frac{(x+jp^{a-b})y}{p^a}\right)\\
&=\sum_{j=0}^{p^b-1} e\left(\frac{jy}{p^b}\right) \sum_{x\in X'} e\left(\frac{xy}{p^a}\right)\\
&=\begin{cases} p^b \sigma_{X'}(\psi(y))&\mbox{if } p^b|y,\\0&\mbox{else}.\\\end{cases}\qedhere
\end{align*}
\end{proof}
\end{prop}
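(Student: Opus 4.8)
The plan is to reduce to the structural description of $X$ as a union of cosets, then split the exponential sum accordingly. First I would observe that since $X = A1$ with $A \le (\Z/p^{a-b}\Z)^\times$ cyclic and $p^b \divides |X|$, the hypothesis $p^{a-b} \equiv 1 \pmod{|\phi(X)|}$ forces $|X| = k p^b$ for some divisor $k$ of $p-1$, because the $p$-part of $(\Z/p^{a-b}\Z)^\times$ has order $p^{a-b-1}$ and the prime-to-$p$ part is cyclic of order $p-1$. The cyclic subgroup $A$ of order $kp^b$ must then be the full preimage $\psi^{-1}(A')$ of the unique subgroup $A' \le (\Z/p^{a-b}\Z)^\times$ of order $k$ (uniqueness by cyclicity of the relevant factor), so that $X = \psi^{-1}(X')$ where $X' = \phi(X)$. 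Concretely this gives the coset decomposition $X = \{x + j p^{a-b} : x \in X',\ 0 \le j < p^b\}$, which is the identity that does all the work.

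Next I would substitute this decomposition into the defining sum \eqref{eq-sigma}, writing
\begin{align*}
\sigma_X(y) = \sum_{x \in X'} \sum_{j=0}^{p^b-1} e\!\left( \frac{(x + j p^{a-b})y}{p^a} \right) = \left( \sum_{j=0}^{p^b-1} e\!\left( \frac{jy}{p^b} \right) \right) \sum_{x \in X'} e\!\left( \frac{xy}{p^a} \right),
\end{align*}
the factorization being legitimate because the cross term $e(xy/p^a)$ does not depend on $j$. The inner geometric sum over $j$ is the classical character sum: it equals $p^b$ when $p^b \divides y$ and vanishes otherwise. When $p^b \divides y$, say $y = p^b y'$, the remaining sum $\sum_{x \in X'} e(x y' / p^{a-b})$ is exactly $\sigma_{X'}(\psi(y))$ by definition, since $\psi(y) \equiv y' \pmod{p^{a-b}}$ and $X' = \phi(X)$ lives in $\Z/p^{a-b}\Z$. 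This yields $\sigma_X(y) = p^b \sigma_{X'}(\psi(y))$ for $p^b \divides y$ and $\sigma_X(y) = 0$ otherwise; taking images over all $y \in \Z/p^a\Z$ (the zero value is attained, e.g. at any $y$ not divisible by $p^b$, and $\psi$ is surjective onto $\Z/p^{a-b}\Z$) gives precisely $\{0\} \cup p^b \sigma_{\phi(X)}(\Z/p^{a-b}\Z)$.

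The only genuinely delicate point is the group-theoretic reduction in the first paragraph: one must be sure that the hypotheses pin down $A$ as a full preimage rather than merely some subgroup mapping onto $A'$. The congruence condition $p^{a-b} \equiv 1 \pmod{|\phi(X)|}$ is what guarantees $A'$ (hence $\phi(X)$) is unchanged under further reduction and, together with cyclicity of $A$ and the constraint $p^b \divides |A|$, forces $\ker(\psi|_A)$ to be all of $\ker\psi \cong \Z/p^b\Z$ inside $(\Z/p^a\Z)^\times$ — a cyclic group cannot contain two distinct subgroups of the same order, so $|A| = kp^b$ with $k \divides p-1$ leaves no freedom. Once that identification is secured, everything else is a routine manipulation of a separable exponential sum, and no asymptotics or deeper input is needed.
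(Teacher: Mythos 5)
Your proposal is correct and follows essentially the same route as the paper's own proof: the coset decomposition $X=\{x+jp^{a-b} : x\in X',\ 0\le j<p^b\}$ via $A=\psi^{-1}(A')$, the factorization of the exponential sum, and the evaluation of the geometric sum over $j$. The extra justification you give for why $A$ must be the full preimage of $A'$ is a welcome elaboration of a step the paper merely asserts, but it does not change the argument.
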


\section{Symmetries}\label{SectionSymmetries}
	We say that a cyclic supercharacter $\sigma_X:\Z/n\Z\to\C$ has \emph{$k$-fold dihedral symmetry} 
	if its image is invariant under the natural action of the dihedral group of order $2k$. In other words, 
	$\sigma_X$ has $k$-fold dihedral symmetry if its image is invariant under complex conjugation and rotation by $2\pi/k$ about the origin. If $X$ is the orbit of $r$, where $(r,n)=\frac{n}{d}$ for some odd divisor $d$ of $n$, then $\sigma_X$ is generally asymmetric about the imaginary axis, as evidenced by Figure \ref{fig:axial}.

	\begin{figure}[h]
		\centering
	        \begin{subfigure}[b]{0.30\textwidth}
	                \centering
	                \includegraphics[width=\textwidth]{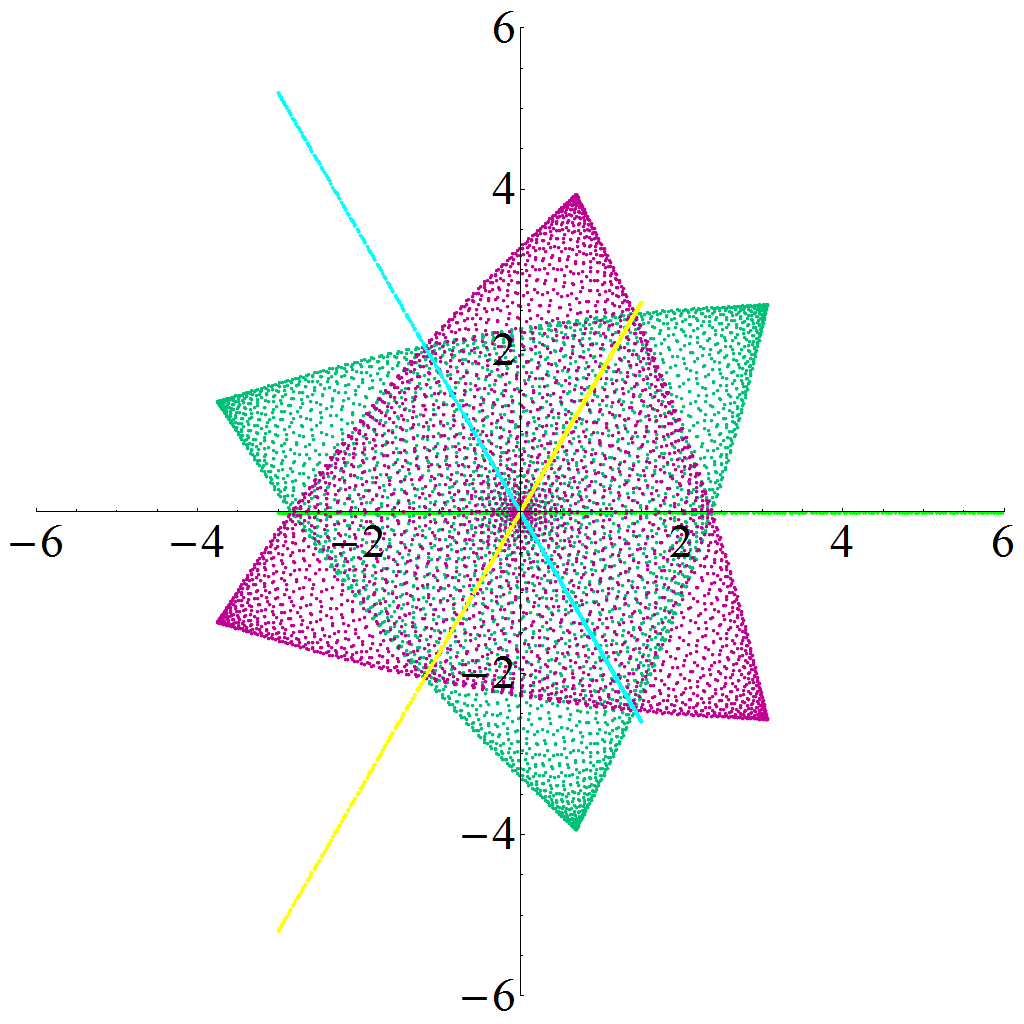}
	                \caption{{\scriptsize $n=68913$, $A=\langle 88\rangle$}}
	                \label{fig:68913_88}
	        \end{subfigure}
			\quad
	        \begin{subfigure}[b]{0.30\textwidth}
	                \centering
	                \includegraphics[width=\textwidth]{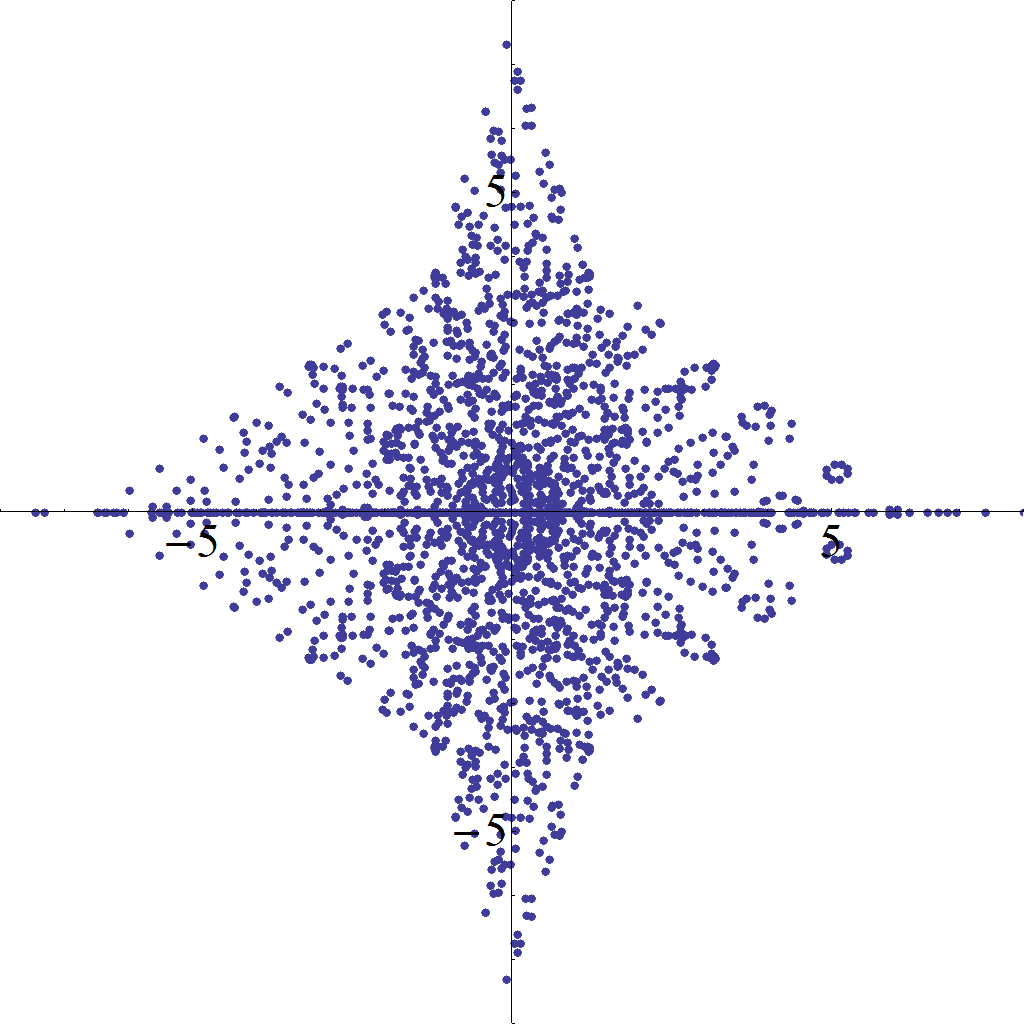}
	                \caption{{\scriptsize $n=20485$, $A=\langle 4609\rangle$}}
	                \label{fig:20485_4609}
	        \end{subfigure}
			\quad
	        \begin{subfigure}[b]{0.30\textwidth}
	                \centering
	                \includegraphics[width=\textwidth]{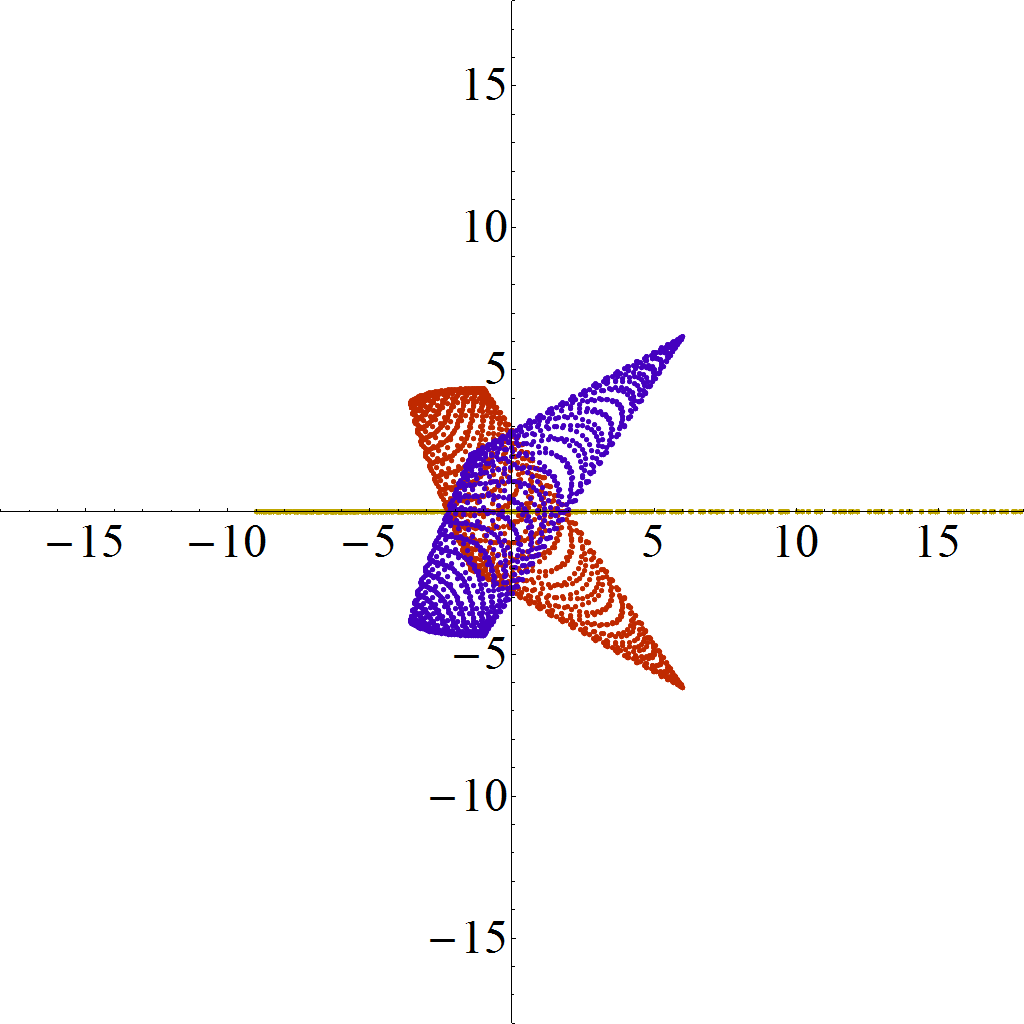}
	                \caption{{\scriptsize $n=51319$, $A=\langle 138\rangle$}}
	                \label{fig:51319_138}
	        \end{subfigure}
			\\
	        \begin{subfigure}[b]{0.30\textwidth}
	                \centering
	                \includegraphics[width=\textwidth]{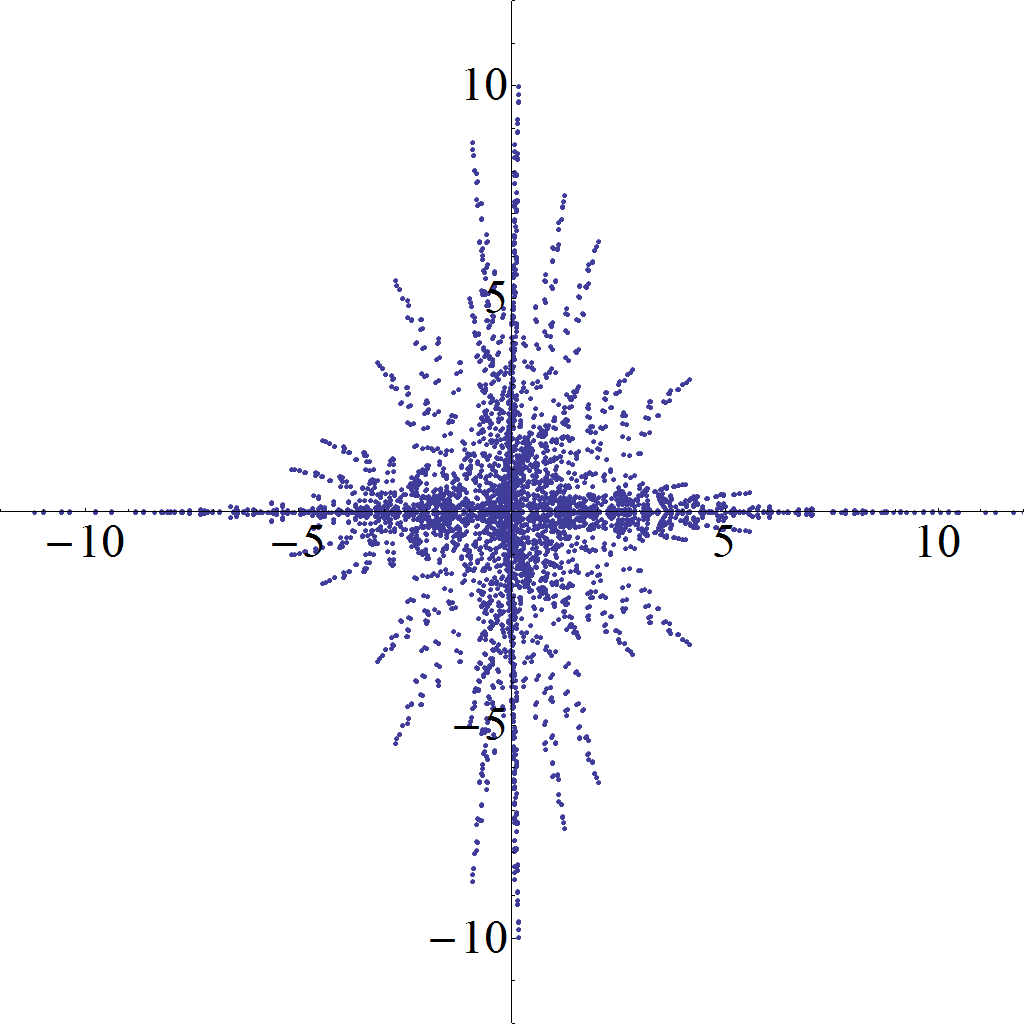}
	                \caption{{\scriptsize $n=51319$, $A=\langle 27\rangle$}}
	                \label{fig:51319_27}
	        \end{subfigure}
			\quad
	        \begin{subfigure}[b]{0.30\textwidth}
	                \centering
			\includegraphics[width=\textwidth]{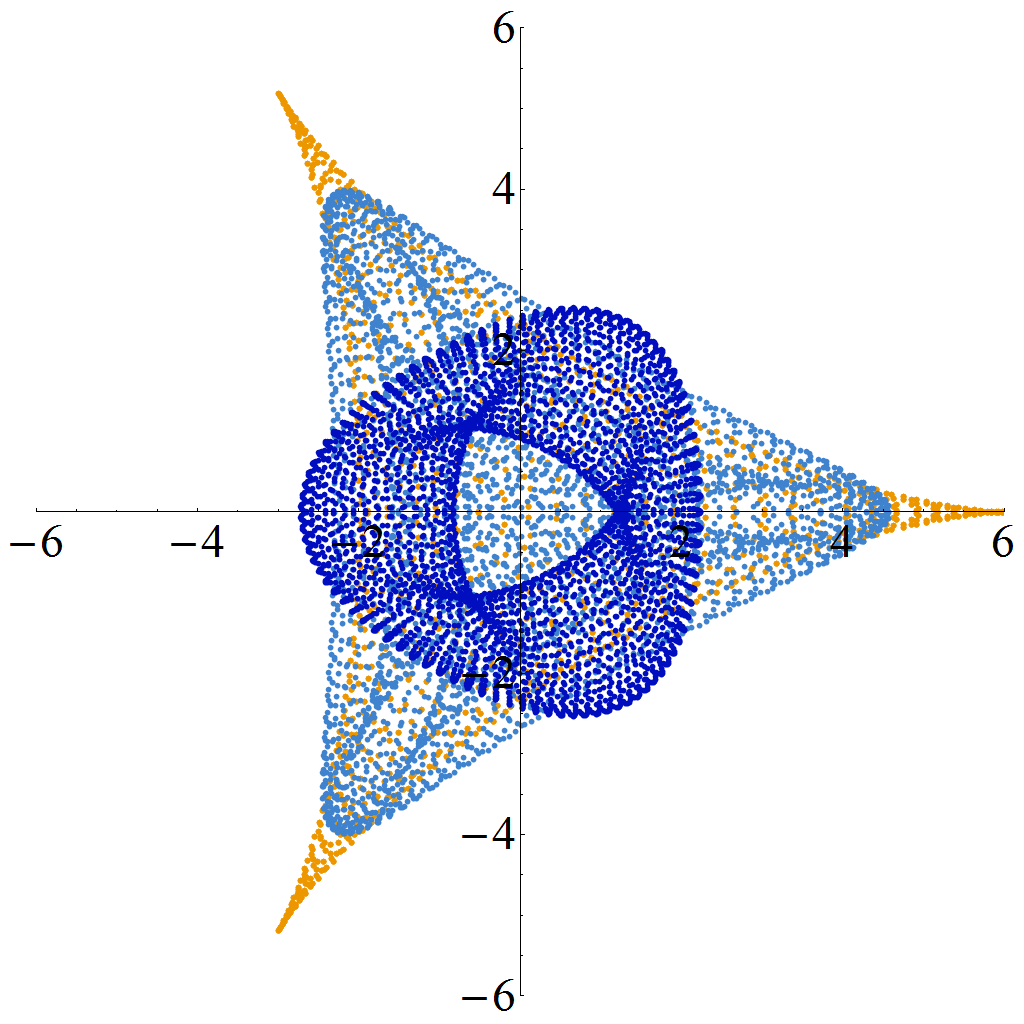}
	                \caption{{\scriptsize $n=44161$, $A=\langle 608\rangle$}}
	                \label{fig:44161_608}
	        \end{subfigure}
			\quad
	        \begin{subfigure}[b]{0.30\textwidth}
	                \centering
	                \includegraphics[width=\textwidth]{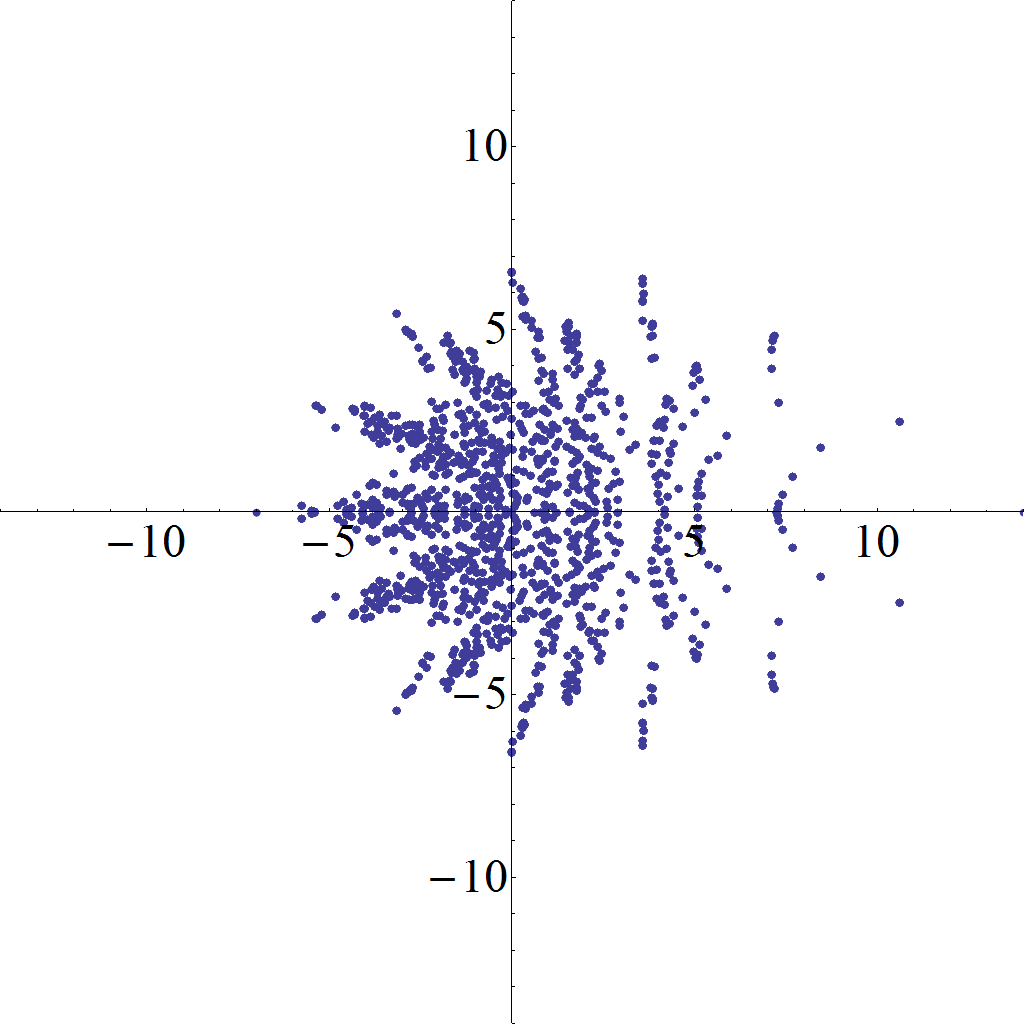}
	                \caption{{\scriptsize $N=16383$, $A=\langle 2\rangle$}}
	                \label{fig:16383_2}
	        \end{subfigure}
	        \caption{Graphs of $\sigma_X$ of $\Z/n\Z$, where $X=Ar$, fixing $r=1$. Odd values of $n/(r,n)$ can produce asymmetric images. See Figure \ref{fig:nicepics1} for a brief discussion of colorization.}
	\label{fig:axial}
	\end{figure}

	\begin{prop}\label{thm:kfold}
		If $\sigma_X$ is a cyclic supercharacter of $\Z/n\Z$, where $X=\langle \omega\rangle r$, then $\sigma_X$ has $(\omega-1,\frac{n}{(r,n)})$-fold dihedral symmetry.
	\end{prop}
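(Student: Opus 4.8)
The plan is to produce an explicit element of $\Z/n\Z$ whose translation realizes the rotational symmetry, together with the trivial reflection symmetry. Set $m=\tfrac{n}{(r,n)}$ and $k=(\omega-1,m)$; note first that $k$ does not depend on the integer representative chosen for $\omega$, since two such representatives differ by a multiple of $n$ and $m\mid n$. It suffices to establish two facts: (a) $\sigma_X(-y)=\overline{\sigma_X(y)}$ for all $y$, and (b) there is a $\delta\in\Z$ with $\sigma_X(y+\delta)=e(1/k)\,\sigma_X(y)$ for all $y$. Granting these, the image of $\sigma_X$ is invariant under complex conjugation by (a), while by (b) we have $e(1/k)\im\sigma_X\subseteq\im\sigma_X$, which forces equality since $\im\sigma_X$ is finite; hence the image is also invariant under rotation by $2\pi/k$, so $\sigma_X$ has $k$-fold dihedral symmetry. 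Fact (a) is immediate from $\overline{\sigma_X(y)}=\sum_{x\in X}e(-xy/n)=\sigma_X(-y)$.

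For (b), write $X=\{r,\omega r,\dots,\omega^{d-1}r\}$ with $d=|X|$, so $\sigma_X(y+\delta)=\sum_{\ell=0}^{d-1}e(\omega^\ell r y/n)\,e(\omega^\ell r\delta/n)$. Since $k\mid m\mid n$ we have $1/k=(n/k)/n$, so it is enough to choose $\delta$ with
$$
\omega^\ell r\,\delta\equiv \tfrac{n}{k}\pmod n \qquad (\ell=0,1,\dots,d-1).
$$
The key observation is that this system reduces to the single congruence $r\delta\equiv n/k\pmod n$ together with $(\omega-1)r\delta\equiv0\pmod n$: indeed $\omega^\ell-1=(\omega-1)(1+\omega+\cdots+\omega^{\ell-1})$, so the second condition yields $(\omega^\ell-1)r\delta\equiv0\pmod n$, whence $\omega^\ell r\delta\equiv r\delta\equiv n/k\pmod n$ for every $\ell$.

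It remains to solve these two congruences simultaneously. The congruence $r\delta\equiv n/k\pmod n$ is solvable precisely when $(r,n)\mid n/k$; since $(r,n)=n/m$ and $k\mid m$, we have $n/m\mid n/k$, so a solution $\delta$ exists. For such $\delta$ we get $(\omega-1)r\delta\equiv(\omega-1)(n/k)\pmod n$, and this vanishes modulo $n$ because $k\mid\omega-1$. This proves (b) and completes the argument. The proof is essentially divisibility bookkeeping; the only point that genuinely needs care is the chain $k\mid m\mid n$ with $(r,n)=n/m$, which is exactly what makes both the solvability of $r\delta\equiv n/k\pmod n$ and the vanishing of $(\omega-1)(n/k)$ modulo $n$ work out, and I do not anticipate a real obstacle beyond checking the reduction of the full $\ell$-system to its two defining congruences.
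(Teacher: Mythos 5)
Your proof is correct and follows essentially the same route as the paper's: both realize the rotation as a translation $y\mapsto y+\delta$ of the argument and get the reflection from $\overline{\sigma_X(y)}=\sigma_X(-y)$. The only difference is that the paper translates by $\delta=\tfrac{n}{k(r,n)}$, obtains rotation by $2\pi\xi/k$ for a unit $\xi$, and then iterates to reach $2\pi/k$, whereas you solve the congruence $r\delta\equiv n/k\pmod{n}$ to land on the rotation by $2\pi/k$ in one step.
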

	
	\begin{proof}
		Let $d=n/(r,n)$. If $k=(\omega-1,d)$, then the generator $\omega$, and hence every element of $\langle \omega\rangle$, has the form $jk+1$. 
		Since $r=\xi n/d$ for some unit $\xi$, each $x$ in $X$ has the form $(\xi n/d) (jk+1)$.
		If $y'=y+d/k$, then $y'-y-d/k\equiv 0\pmod{n}$, in which case
		\begin{equation*}
			\frac{\xi n}{d}\left(jk+1\right)\left(y'-y-\frac{d}{k}\right)\equiv 0\pmod{n}.
		\end{equation*}
		It follows that
		\begin{equation*}
			(jk+1)\left(\frac{\xi n}{d}\left(y'-y\right)-\frac{\xi n}{k}\right)\equiv 0\pmod{n},
		\end{equation*}
		whence
		\begin{align*}
			\frac{\xi n}{d}(jk+1)y'&\equiv \frac{\xi n}{d}(jk+1)y + \frac{\xi n}{k}(jk+1)\pmod{n}\\
			&\equiv \frac{\xi n}{d}(jk+1)y +\frac{\xi n}{k} \pmod{n},
		\end{align*}
		Since the function $e$ is periodic with period $1$, we have
		\begin{equation*}
			\sum_{x\in X} e\left(\frac{xy'}{n}\right)=\sum_{x\in X} e\left(\frac{xy+\xi n/k}{n}\right)=e\left(\frac{\xi}{k}\right)\sum_{x\in X} e\left(\frac{xy}{n}\right).
		\end{equation*}
		In other words, the image of $\sigma_X$ is invariant under counterclockwise rotation by $2\pi\xi/k$ about the origin. 
		If $m\xi\equiv1\pmod{k}$, then the graph is also invariant under counterclockwise rotation by
		$m\cdot2\pi\xi/k=2\pi/k$. Dihedral symmetry follows, since for all $y$ in $\Z/n\Z$, the the image of $\sigma_X$ contains both $\sigma_X(y)$ and $\overline{\sigma_X(y)}=\sigma_X(-y)$.
	\end{proof}

	\begin{example}
		For $m=1,2,3,4,6,8,12$, let $X_m$ denote the orbit of $1$ under the action of $\langle 4609\rangle$ on $\Z/(20485m)\Z$. 
		Consider the cyclic supercharacter $\sigma_{X_1}$, whose graph appears in Figure \ref{fig:20485_4609}. 
		We have $(20485,4608)=(5\cdot17\cdot 241,2^9\cdot 3^2)=1$, 
		so Theorem \ref{thm:kfold} guarantees that $\sigma_{X_1}$ has 1-fold dihedral symmetry. 
		It is visibly apparent that $\sigma_X$ has \emph{only} the trivial rotational symmetry. 
		
		Figures \ref{fig:r1} to \ref{fig:r6} display the graphs of $\sigma_{X_m}$ in the cases $m\neq 1$. 
		For each such $m$, the graph of $\sigma_{X_m}$ contains a scaled copy of $\sigma_{X_1}$ 
		by Theorem \ref{thm:master} and has $m$-fold dihedral symmetry by Theorem \ref{thm:kfold}, 
		since $(20485m,4608)=m$. It is evident from the associated figures 
		that $m$ is maximal in each case, in the sense that $\sigma_{X_m}$ having $k$-fold dihedral symmetry implies $k\leq m$.
	\end{example}

		\begin{figure}[h]
		\centering
		        \begin{subfigure}[b]{0.30\textwidth}
		                \centering
				\includegraphics[width=\textwidth]{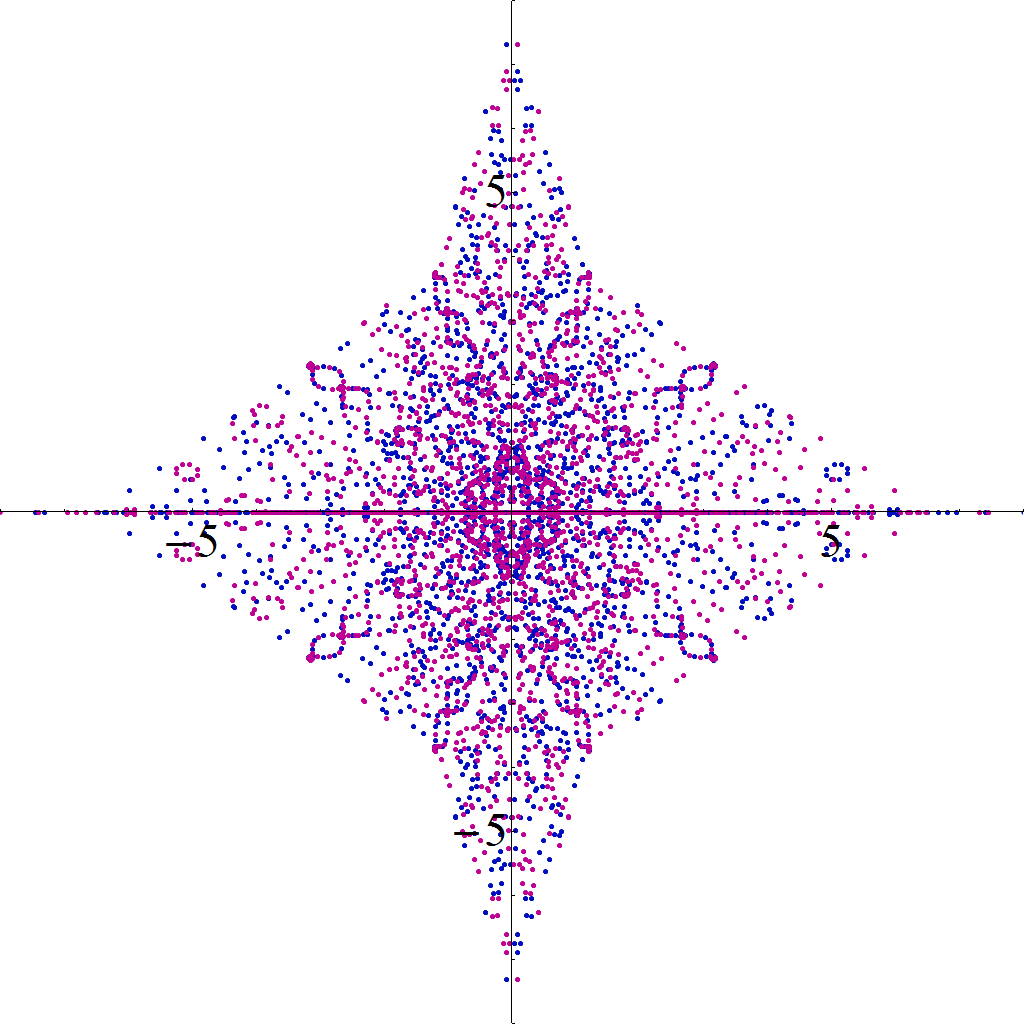}
		                \caption{{\scriptsize $n=2\cdot 20485$}}
		                \label{fig:r1}
		        \end{subfigure}
				\quad
		        \begin{subfigure}[b]{0.30\textwidth}
		                \centering
		                \includegraphics[width=\textwidth]{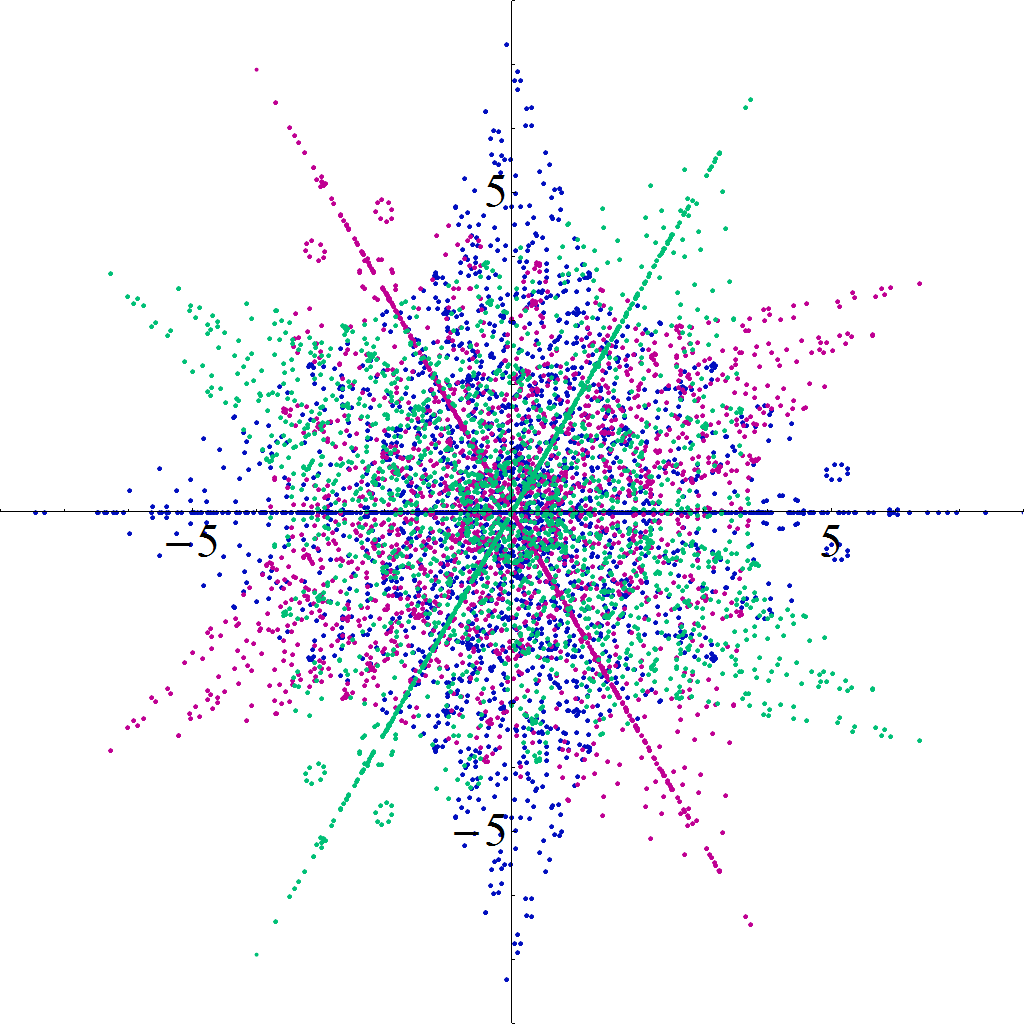}
		                \caption{{\scriptsize $n=3\cdot 20485$}}
		                \label{fig:r2}
		        \end{subfigure}
				\quad
				\begin{subfigure}[b]{0.30\textwidth}
		                \centering
		                \includegraphics[width=\textwidth]{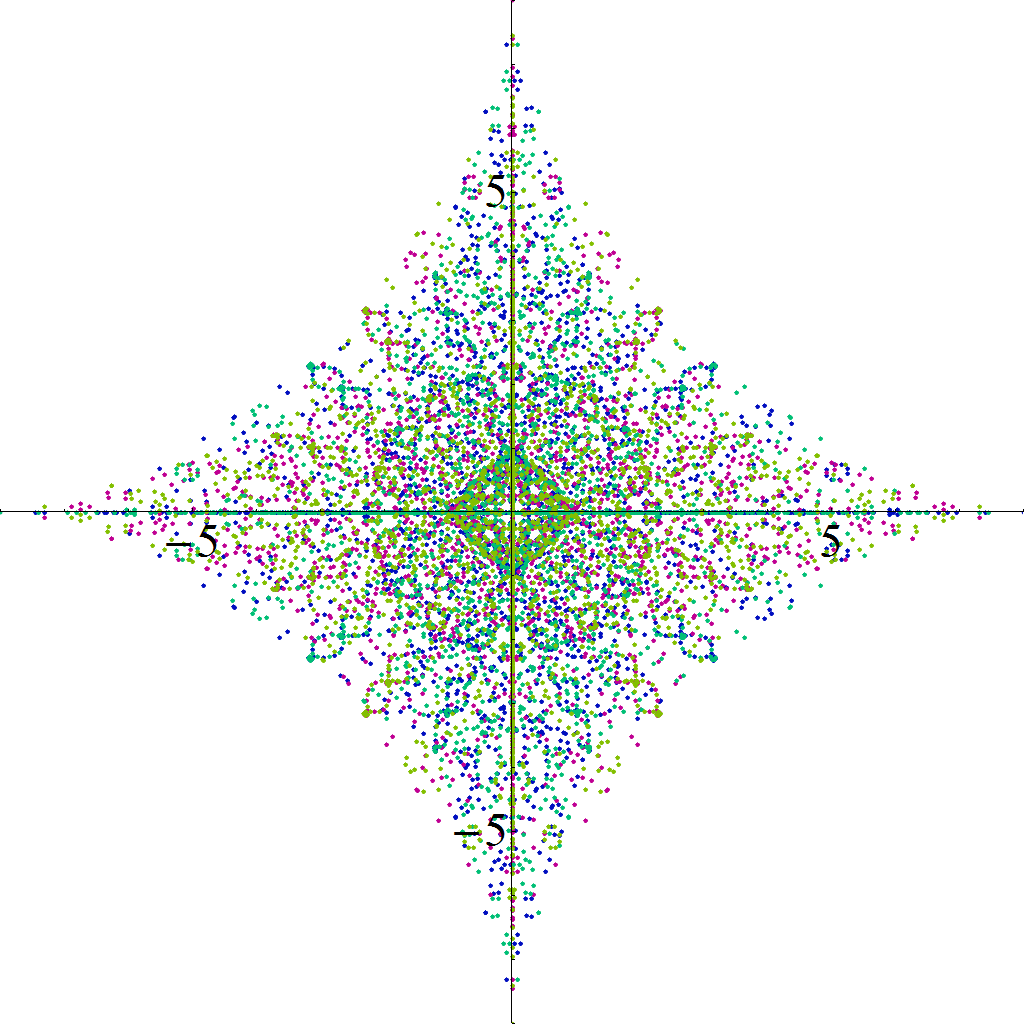}
		                \caption{{\scriptsize $n=4\cdot 20485$}}
		                \label{fig:r3}
		        \end{subfigure}
				\\
		        \begin{subfigure}[b]{0.30\textwidth}
		                \centering
		                \includegraphics[width=\textwidth]{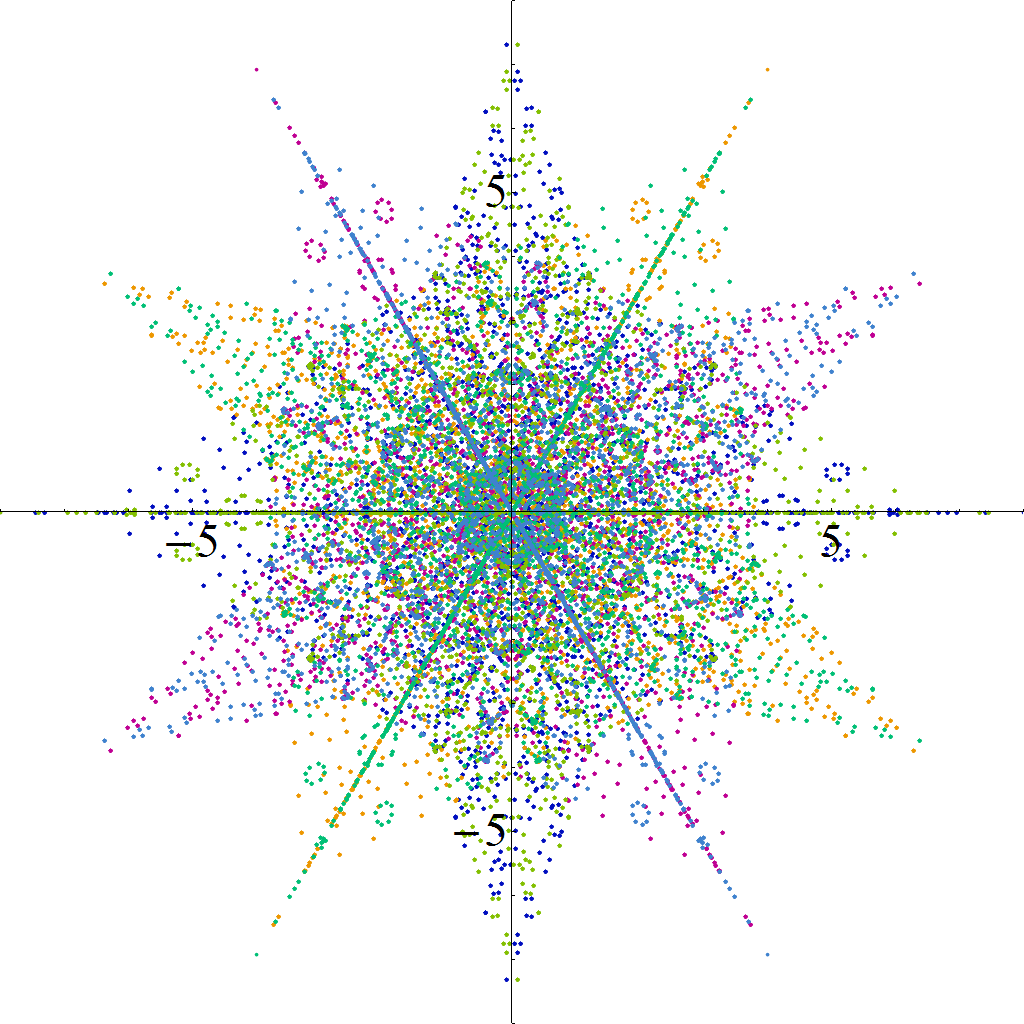}
		                \caption{{\scriptsize $n=6\cdot 20485$}}
		                \label{fig:r4}
		        \end{subfigure}
				\quad
		        \begin{subfigure}[b]{0.30\textwidth}
		                \centering
		                \includegraphics[width=\textwidth]{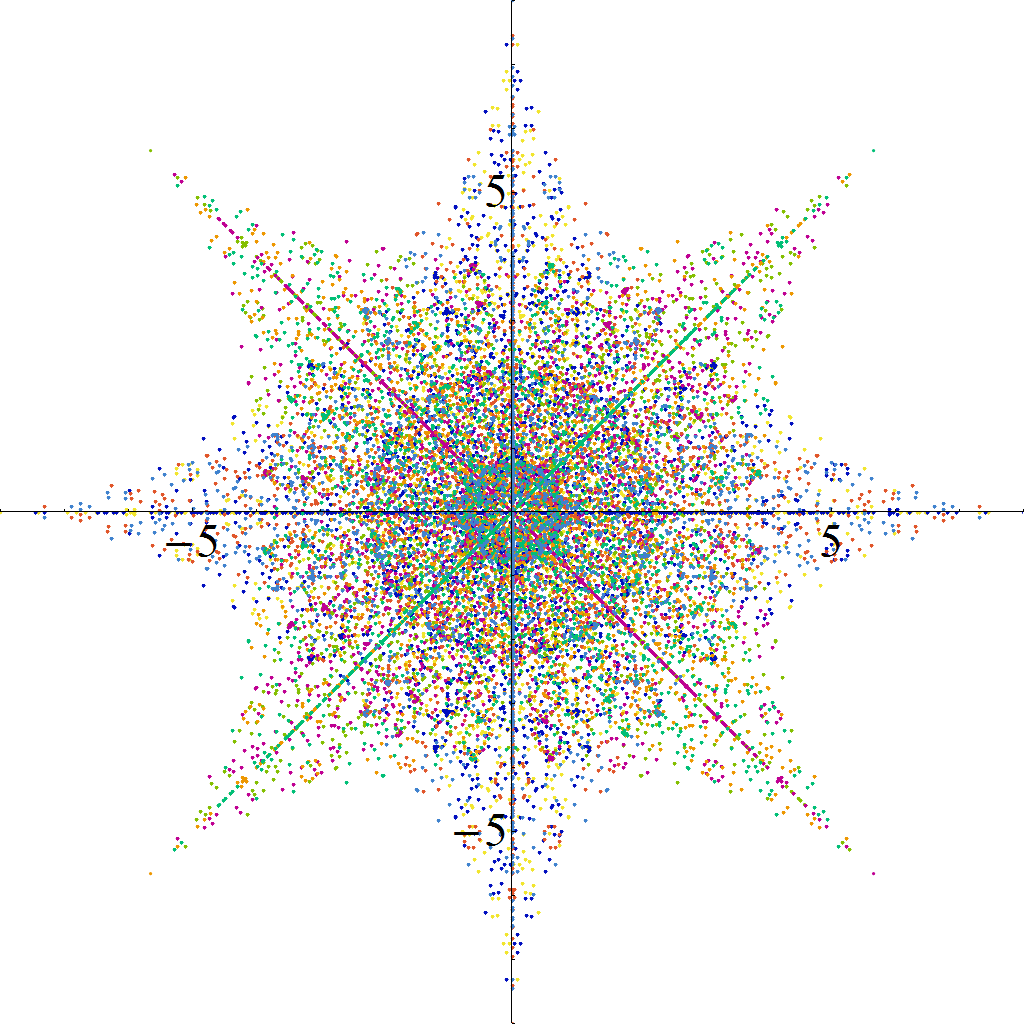}
		                \caption{{\scriptsize $n=8\cdot 20485$}}
		                \label{fig:r5}
		        \end{subfigure}
				\quad
				\begin{subfigure}[b]{0.30\textwidth}
		                \centering
		                \includegraphics[width=\textwidth]{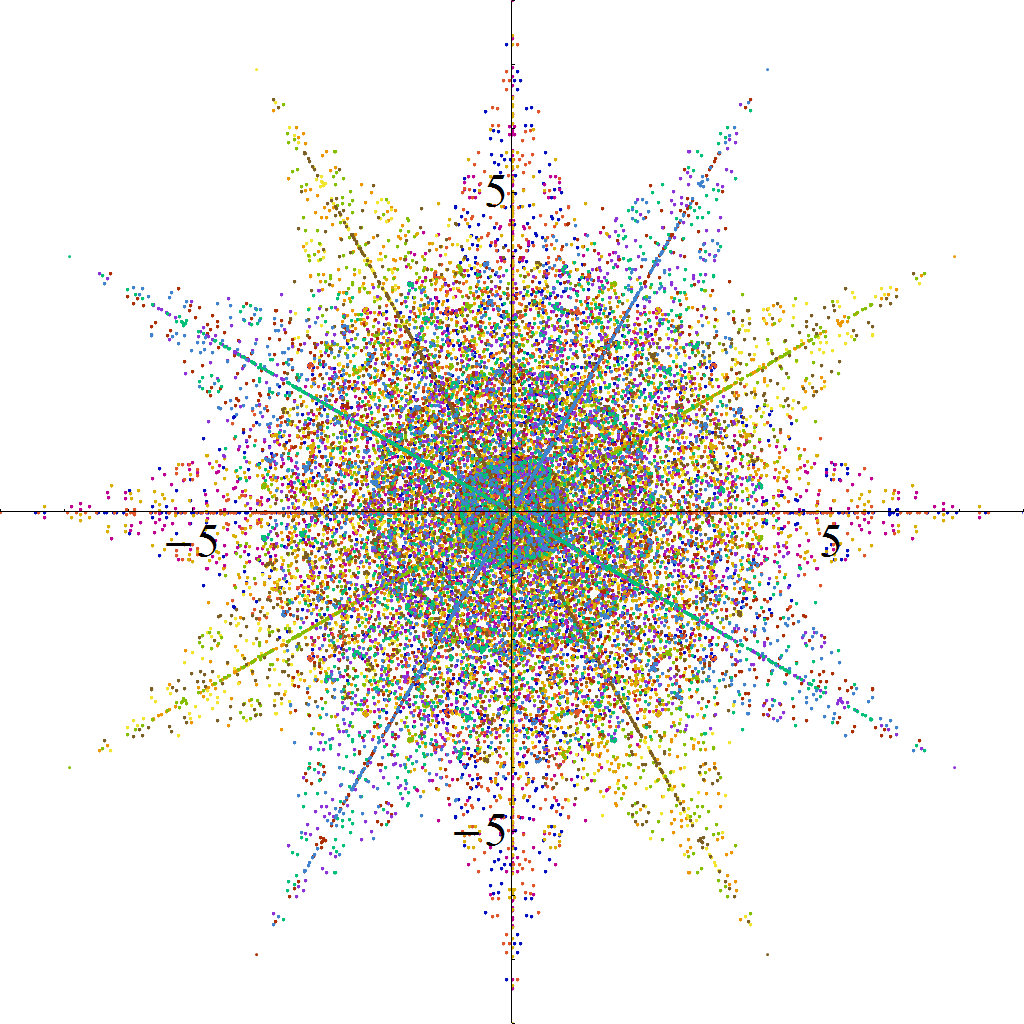}
		                \caption{{\scriptsize $n=12\cdot 20485$}}
		                \label{fig:r6}
		        \end{subfigure}
		        \caption{Graphs of cyclic supercharacters $\sigma_X$ of $\Z/n\Z$, where $X=\langle 4609\rangle 1$. By taking multiples of $n$, 
		        we produce dihedrally symmetric images containing the one in Figure \ref{fig:20485_4609}, each rotated copy of which is colored differently.}
		\label{fig:rotational}
		\end{figure}

\section{Real and imaginary supercharacters}\label{SectionReal}
	The images of some cyclic supercharacters are subsets of the real axis. 
	Many others are subsets of the union of the real and imaginary axes. 
	In this section, we establish sufficient conditions for each situation to occur and provide explicit evaluations in certain cases. Let $\sigma_X$ be a cyclic supercharacter of $\Z/n\Z$, where $X=Ar$. If $A$ contains $-1$, then it is immediate from \eqref{eq-sigma} that $\sigma_X$ is real-valued.

	\begin{example}
		Let $X$ be the orbit of $3$ under the action of $\langle 164\rangle$ on $\Z/855\Z$. 
		Since $164^3\equiv -1\pmod{n}$, it follows that $\sigma_X$ is real-valued, as suggested by
		Figure \ref{fig:855_164}.
	\end{example}

	\begin{example}
		If $A=\langle -1\rangle$ and $X=Ar$ where $r\neq \tfrac{n}{2}$, then $X=\{-r,r\}$ and
		$\sigma_X(y)=2\cos(2\pi ry/n)$.  Figure \ref{fig:105_104} illustrates this situation.
	\end{example}

	\begin{figure}[h]
		\centering
	        \begin{subfigure}[b]{0.30\textwidth}
	                \centering
	                \includegraphics[width=\textwidth]{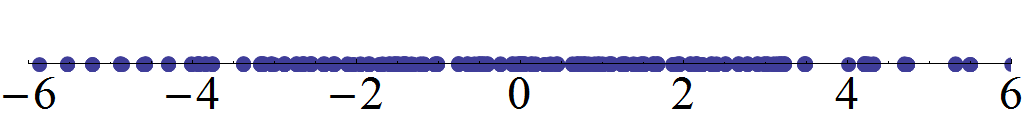}
	                \caption{{\scriptsize $n=855$, $A=\langle 164\rangle$}}
	                \label{fig:855_164}
	        \end{subfigure}
			\quad
                \begin{subfigure}[b]{0.30\textwidth}
	                \centering
	                \includegraphics[width=\textwidth]{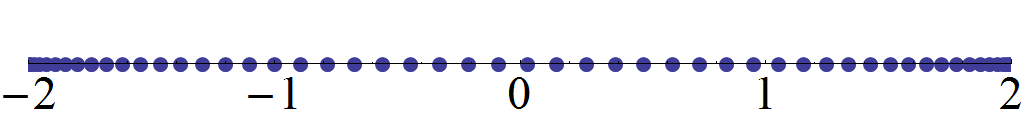}
	                \caption{{\scriptsize $n=105$, $A=\langle 104\rangle$}}
	                \label{fig:105_104}
	        \end{subfigure}
	        \quad
	        \begin{subfigure}[b]{0.30\textwidth}
	                \centering
	                \includegraphics[width=\textwidth]{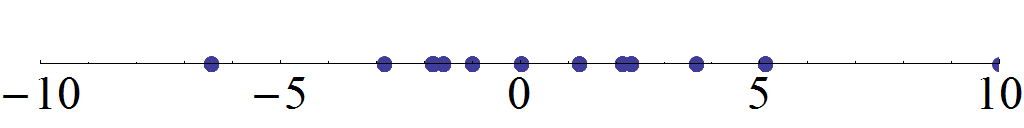}
	                \caption{{\scriptsize $n=121$, $A=\langle 94\rangle$}}
	                \label{fig:605_578}
	        \end{subfigure}
	        \caption{Graphs of cyclic supercharacters $\sigma_X$ of $\Z/n\Z$, where $X=A1$.
	        Each $\sigma_X$ is real-valued, since each $A$ contains $-1$.}
		\label{fig:realval}
	\end{figure}

	We turn our attention to cyclic supercharacters whose values, if not real, are purely imaginary
	(see Figure \ref{fig:realim}).  To this end, we introduce the following notation.	
	Let $k$ be a positive divisor of $n$, and suppose that
	\begin{equation}\label{eq:j0}
		\qquad A=\left\langle j_0n/k-1\right\rangle,\quad\mbox{for some }1\leq j_0< k.
	\end{equation}
	In this situation, we have
	\begin{equation*}
		\left(j_0n/k-1\right)^m\equiv(-1)^m\quad\left(\bmod\,\tfrac{n}{k}\right),
	\end{equation*}
	so that every element of $A$ has either the form $\tfrac{jn}{k}+1$ or $\tfrac{jn}{k}-1$, where $0\leq j<k$. In this situation, we write
	\begin{equation}\label{eq:pmorbit}
		A=\left\{jn/k+1 : j\in J_+\right\}\cup\left\{jn/k-1 : j\in J_-\right\}
	\end{equation}
	for some subsets $J_+$ and $J_-$ of $\{0,1,\ldots,k-1\}$. 
	
The condition \eqref{eq:pmorbit} is vacuous if $k=n$. 
	However, if $k<n$ and $j_0>1$ (i.e., if $A$ is nontrivial), then it follows that $(-1)^{|A|}\equiv 1\pmod{\tfrac{n}{k}}$, whence $|A|$ is even. 
	In particular, this implies $|J_+|=|J_-|$. The subsets $J_+$ and $J_-$ are not necessarily disjoint.
	For instance, if $A=\langle -1\rangle=\{-1,1\}$, then \eqref{eq:pmorbit} holds where $k=1$ and $J_+=J_-=\{0\}$. 
	In general, $J_+$ must contain 0, since $A$ must contain $1$. The following result is typical of those obtainable by imposing restrictions on $J_+$ and $J_-$.

	\begin{prop}\label{improp}
		Let $\sigma_X$ be a cyclic supercharacter of $\Z/n\Z$, where $X=Ar$, and suppose that \eqref{eq:pmorbit} holds, 
		where $k$ is even and $J_-=\tfrac{k}{2}-J_+$. 
		\begin{enumerate}
			\item[(i)] If $r$ is even, then the image of $\sigma_X$ is a subset of the real axis.
			\item[(ii)] If $r$ is odd, then $\sigma_X(y)$ is real whenever $y$ is even and purely imaginary whenever $y$ is odd.
		\end{enumerate}
	\end{prop}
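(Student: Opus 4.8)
The plan is to compute $\sigma_X(y)$ directly from the description \eqref{eq:pmorbit} of $A$, splitting the sum over $X = Ar$ into the ``$+1$ part'' indexed by $J_+$ and the ``$-1$ part'' indexed by $J_-$, and then to pair up the two parts using the hypothesis $J_- = \tfrac{k}{2} - J_+$. Concretely, for $a \in A$ of the form $a = jn/k + 1$ with $j \in J_+$ we get a term $e(ary/n) = e(jry/k)\,e(ry/n)$, and for $a = jn/k - 1$ with $j \in J_-$ we get $e(jry/k)\,e(-ry/n)$. Writing $j = \tfrac{k}{2} - i$ with $i \in J_+$ in the second sum, the factor $e(jry/k) = e(ry/2)\,e(-iry/k)$. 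So the whole supercharacter becomes
\begin{equation*}
	\sigma_X(y) = e\!\left(\tfrac{ry}{n}\right)\sum_{i\in J_+} e\!\left(\tfrac{iry}{k}\right) \;+\; e\!\left(\tfrac{ry}{2}\right)e\!\left(-\tfrac{ry}{n}\right)\sum_{i\in J_+} e\!\left(-\tfrac{iry}{k}\right).
\end{equation*}
Setting $S = \sum_{i\in J_+} e(iry/k)$, this reads $\sigma_X(y) = e(ry/n)\,S + e(ry/2)\,e(-ry/n)\,\overline{S}$, where I am using that the second inner sum is the complex conjugate of the first (the summands are on the unit circle). Here one has to be mildly careful that $e(jry/k)$ really only depends on $j \bmod k$, so the substitution $j = \tfrac{k}{2} - i$ lands correctly inside $\{0,1,\dots,k-1\}$ up to that periodicity; this is the one genuine bookkeeping point and it is routine.

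Now the two cases fall out by examining the scalar $e(ry/2) = (-1)^{ry}$. In case (i), $r$ is even, so $ry$ is even, $e(ry/2) = 1$, and $\sigma_X(y) = e(ry/n)S + \overline{e(ry/n)S} = 2\,\Re{(e(ry/n)S)} \in \R$. In case (ii), $r$ is odd, so $e(ry/2) = (-1)^y$: when $y$ is even we again get $e(ry/2)=1$ and $\sigma_X(y) = 2\Re{(e(ry/n)S)}$ is real, while when $y$ is odd we get $e(ry/2) = -1$ and $\sigma_X(y) = e(ry/n)S - \overline{e(ry/n)S} = 2i\,\Im{(e(ry/n)S)}$ is purely imaginary. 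That completes all the assertions.

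I do not expect a serious obstacle here: the argument is a direct manipulation of the finite sum, and the only thing to watch is the index arithmetic modulo $k$ when applying $J_- = \tfrac{k}{2} - J_+$ (interpreting the right-hand side in $\Z/k\Z$), together with the observation that $\tfrac{k}{2}$ is an integer because $k$ is assumed even. One should also note in passing that the hypothesis $|J_+| = |J_-|$ recorded before the statement is automatically consistent with $J_- = \tfrac k2 - J_+$, so no separate case for trivial $A$ is needed. If anything needs care it is purely cosmetic: making sure the substitution is phrased as an equality of multisets of residues mod $k$ rather than of literal integers in $\{0,\dots,k-1\}$.
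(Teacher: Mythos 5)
Your proof is correct and takes essentially the same route as the paper's: both pair the $J_+$ terms with the $J_-$ terms via $j\mapsto \tfrac{k}{2}-j$ and reduce everything to the sign $e(ry/2)=(-1)^{ry}$, the paper doing this summand-by-summand while you factor the whole sum as $e(ry/n)S+(-1)^{ry}\,\overline{e(ry/n)S}$. If anything, your packaging is slightly cleaner (and avoids a sign typo in the paper's displayed second form of $x$), so no changes are needed.
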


	\begin{proof}
		Each $x$ in $X$ has the form $(jn/k+1)r$ or $\left(\left(k/2-j\right)n/k+1\right)r$. If $y=2m$ for some integer $m$, then for every summand $e(xy/n)$ in the definition of $\sigma_X(y)$ having the form $e\left(2m(jn/k+1)r/n\right)$,
		there is one of the form $e\left(2m(n/2-jn/k+1)r/n\right)$, its complex conjugate. From this we deduce that $\sigma_X(y)$ is real whenever $y$ is even. If $y=2m+1$, then for every summand of the form $e\left((2m+1)\left(jn/k+1\right)r/n\right)$, there is one of the form $e\left((2m+1)(n/2-jn/k+1)r/n\right)$. If $r$ is odd, then the latter is the former reflected across the imaginary axis, in which case $\sigma_X(y)$ is purely imaginary. 
		If $r$ is even, then the latter is the complex conjugate of the former, in which case $\sigma_X(y)$ is real.
	\end{proof}

	\begin{figure}[h]
		\centering
	        \begin{subfigure}[b]{0.30\textwidth}
	                \centering
	                \includegraphics[width=\textwidth]{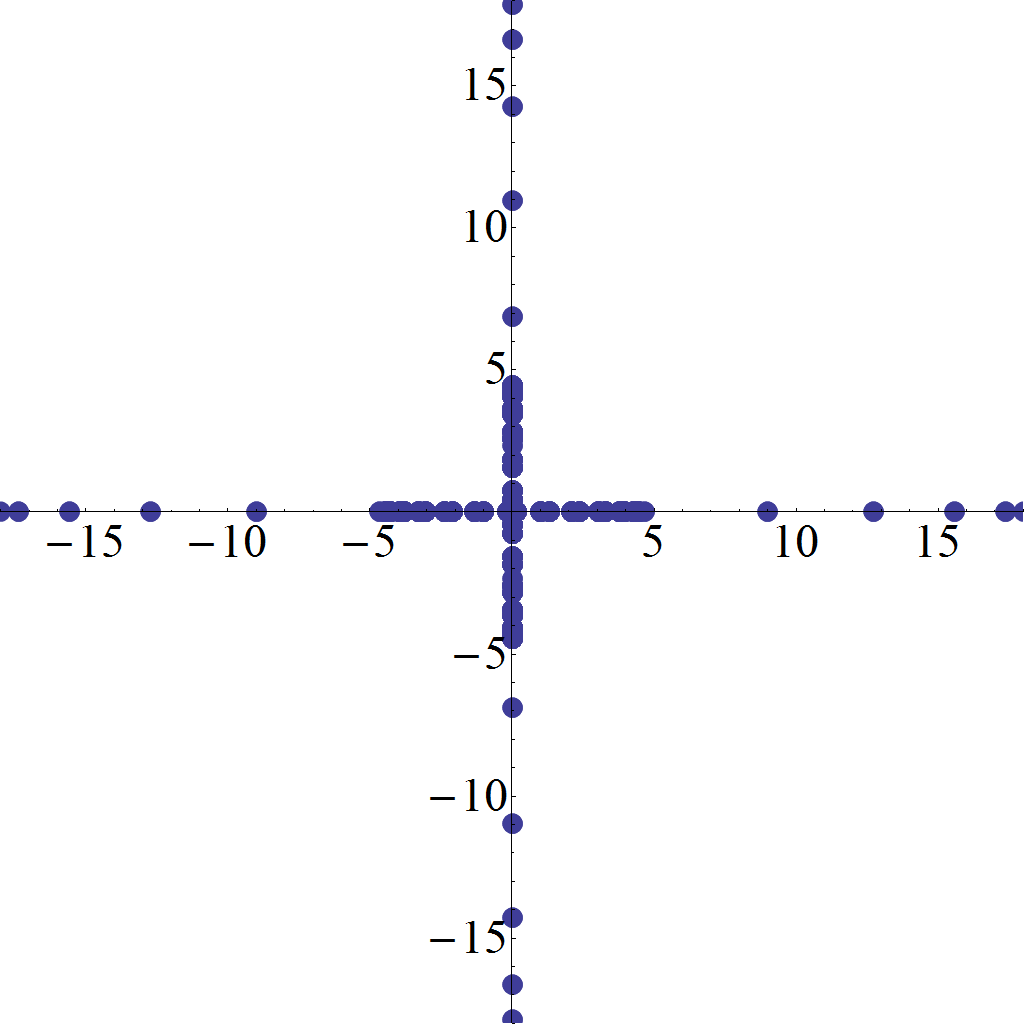}
	                \caption{{\scriptsize $n=912$, $A=\langle 71\rangle$}}
	                \label{fig:912_71}
	        \end{subfigure}
	        \quad
	        \begin{subfigure}[b]{0.30\textwidth}
	                \centering
	                \includegraphics[width=\textwidth]{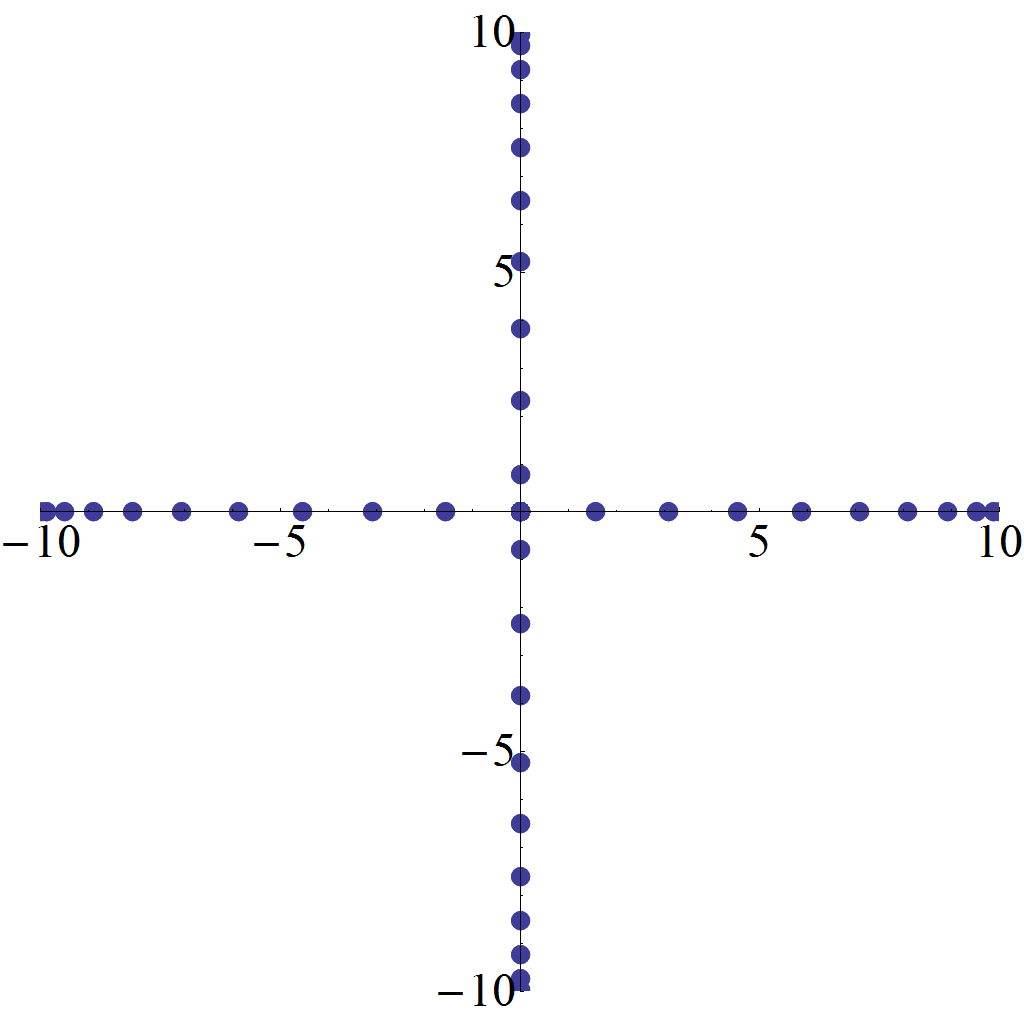}
	                \caption{{\scriptsize $n=400$, $A=\langle 39\rangle$}}
	                \label{fig:400_39}
	        \end{subfigure}
		\quad
	        \begin{subfigure}[b]{0.30\textwidth}
	                \centering
	                \includegraphics[width=\textwidth]{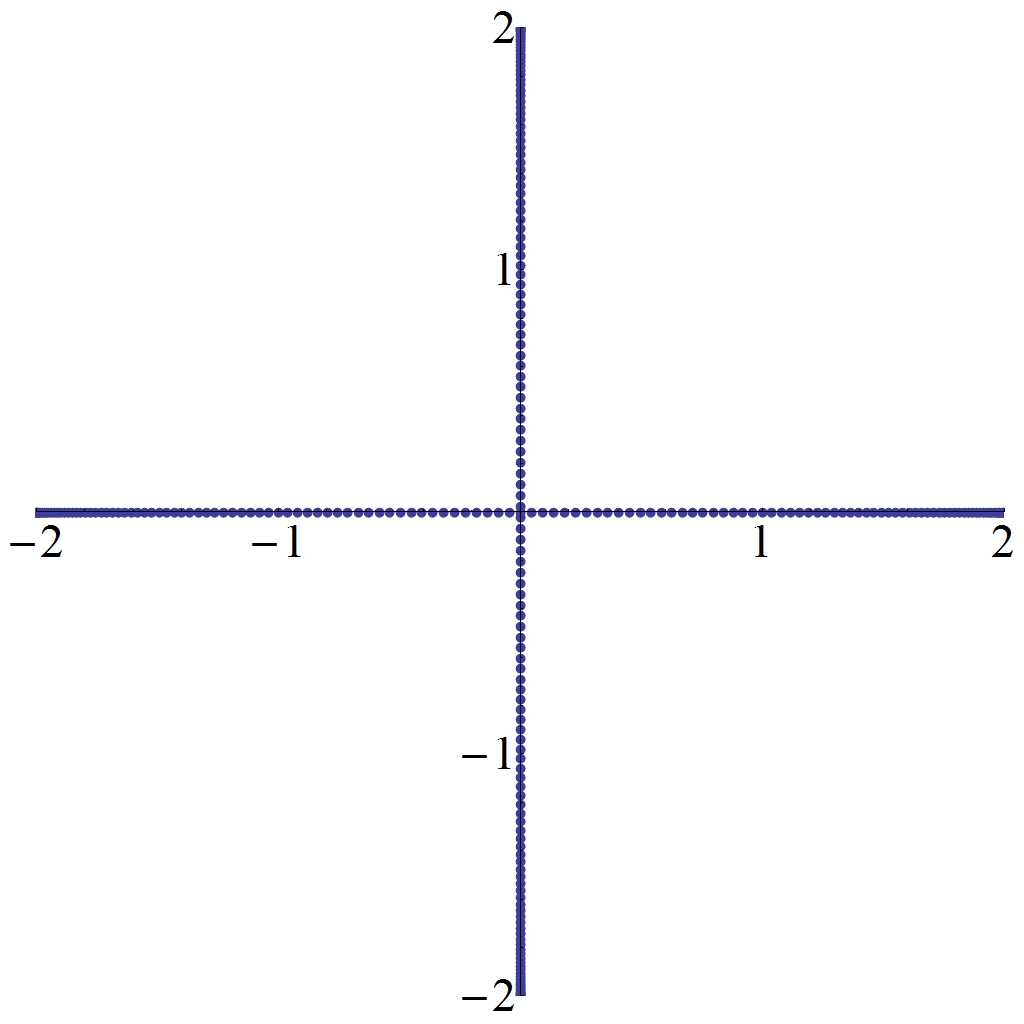}
	                \caption{{\scriptsize $n=552$, $A=\langle 275\rangle$}}
	                \label{fig:552_275}
	        \end{subfigure}
	        \caption{Graphs of cyclic supercharacters $\sigma_X$ of $\Z/n\Z$, where $X=A1$. Some cyclic supercharacters have values that are either real or purely imaginary.}
		\label{fig:realim}
	\end{figure}
	
	\begin{example}
		In the case of Figure \ref{fig:912_71}, we have $n=912$, $r=1$, $k=38$, $j_0=3$,
		$$J_+=\{0,2,12,16,20,22,24,26,32\},\quad\mbox{and}\quad
		J_-=\{3,7,17,19,25,31,33,35,37\},$$
		so the hypotheses of Proposition \ref{improp}(ii) hold.
	\end{example}
	
	An explicit evaluation of $\sigma_X$ is available if 
	$J_+\cup J_-=\{0,1,\ldots,k-1\}$. The following result, presented without proof, treats this situation 
	(see Figure \ref{fig:400_39}).

	\begin{prop}\label{explicitprop}
		Suppose that $k>2$ is even, and that \eqref{eq:pmorbit} holds where $J_+$ is the set of all even residues modulo $k$ 
		and $J_-$ is the set of all odd residues. If $X$ is the orbit of a unit $r$ under the action of $A$ on $\Z/n\Z$, then
		\begin{equation*}
			\sigma_X(y)=
			\begin{cases}
				k\cos\frac{2\pi ry}{n}&\mbox{if } k|y,\\
				ik\sin\frac{2\pi ry}{n}&\mbox{if }y\equiv \frac{k}{2}\pmod{k},\\
				0&\mbox{otherwise}.\\
			\end{cases}
		\end{equation*}
	\end{prop}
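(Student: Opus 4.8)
The plan is to compute $\sigma_X(y)$ directly from \eqref{eq:pmorbit}, splitting the sum over $A$ into the contribution from $J_+$ and the contribution from $J_-$. Write $X = Ar$ with $r$ a unit, so that
\begin{equation*}
\sigma_X(y) = \sum_{j\in J_+} e\!\left(\frac{(jn/k+1)ry}{n}\right) + \sum_{j\in J_-} e\!\left(\frac{(jn/k-1)ry}{n}\right).
\end{equation*}
Under the present hypotheses $J_+$ is the set of even residues mod $k$ and $J_-$ the set of odd residues, so as $j$ ranges over $J_+$ the quantity $jn/k$ ranges over the multiples of $2n/k$ lying in $[0,n)$, and as $j$ ranges over $J_-$ the quantity $jn/k$ ranges over the odd multiples of $n/k$ in $[0,n)$. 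Pulling out the factor $e(ry/n)$ from the first sum and $e(-ry/n)$ from the second, each inner sum becomes a geometric sum of $k/2$-th roots of unity raised to the power $y$: explicitly $\sum_{\ell=0}^{k/2-1} e(2\ell ry/k)$ for the $J_+$ part and $e(2ry/k)\sum_{\ell=0}^{k/2-1} e(2\ell ry/k)$ for the $J_-$ part, after reindexing $j = 2\ell$ and $j = 2\ell+1$ respectively and using that $r$ is a unit so $ry$ behaves mod $k$ like $y$ does up to the unit $r$.

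Next I would invoke the standard geometric-sum dichotomy: $\sum_{\ell=0}^{k/2-1} \zeta^{\ell} $ equals $k/2$ when $\zeta$ is a $(k/2)$-th root of unity equal to $1$, i.e. when $ry \equiv 0 \pmod{k/2}$ — equivalently, since $r$ is a unit, when $k/2 \mid y$ — and vanishes otherwise. This splits the analysis into the three cases in the statement. When $k \mid y$: both $e(2ry/k) = 1$ and the geometric sums contribute $k/2$, so $\sigma_X(y) = (k/2)(e(ry/n) + e(-ry/n)) = k\cos\frac{2\pi ry}{n}$. When $y \equiv k/2 \pmod k$: we still have $k/2 \mid y$ so the geometric sums contribute $k/2$, but now $e(2ry/k) = e(ry/(k/2))$; since $y/(k/2)$ is an odd integer and $r$ is odd-or-even — here one must check that $ry/(k/2)$ is odd, which follows because $r$ is a unit mod $n$ hence odd when $k$ is even (as $2 \mid k \mid n$), so $e(2ry/k) = -1$ — giving $\sigma_X(y) = (k/2)(e(ry/n) - e(-ry/n)) = ik\sin\frac{2\pi ry}{n}$. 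In the remaining case $k/2 \nmid y$, both geometric sums vanish and $\sigma_X(y) = 0$.

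The main obstacle is bookkeeping the parity conditions carefully: one must verify that $r$ is necessarily odd (so that $r$ being a unit modulo the even number $k \mid n$ forces $2\nmid r$), and then track that in the middle case the exponent $2ry/k$ is genuinely an odd integer so that $e(2ry/k) = -1$ rather than $+1$. A secondary subtlety is the reindexing of $J_+$ and $J_-$: I should confirm that $j \mapsto jn/k$ is injective modulo $n$ on $\{0,\dots,k-1\}$ (it is, since $n/k \cdot j \equiv n/k \cdot j' \pmod n$ iff $j \equiv j' \pmod k$), so that no cancellation or multiplicity is lost when passing between the sum over $A$ and the geometric sums. Once these parity checks are in place the three evaluations drop out mechanically.
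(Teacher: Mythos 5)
The paper states Proposition \ref{explicitprop} explicitly \emph{without proof}, so there is no in-paper argument to compare against; your strategy (split the sum over $A$ into the $J_+$ and $J_-$ contributions, reindex by parity, and apply the geometric-sum dichotomy for $(k/2)$th roots of unity) is the natural one and does lead to the stated formula. However, your treatment of the middle case contains two errors that happen to cancel, and as written that step is not correct. Reindexing $j=2\ell+1$ over $J_-$ gives
\[
\sum_{j\in J_-} e\left(\frac{(jn/k-1)ry}{n}\right)
= e\left(-\frac{ry}{n}\right)\, e\left(\frac{ry}{k}\right)\sum_{\ell=0}^{k/2-1} e\left(\frac{2\ell ry}{k}\right),
\]
so the extra factor in front of the geometric sum is $e(ry/k)$, not $e(2ry/k)$ as you wrote. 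Moreover, your evaluation of that factor is wrong under the paper's convention $e(\theta)=\exp(2\pi i\theta)$: if $2ry/k$ really were an odd integer, then $e(2ry/k)=1$, not $-1$, since $e$ of any integer equals $1$. What rescues the computation is that the \emph{correct} factor $e(ry/k)$ does equal $-1$ when $y\equiv k/2\pmod{k}$: writing $y=(2m+1)k/2$ gives $ry/k=r(2m+1)/2$, which is half of an odd integer because $r$ is odd (as you rightly observe, $2\mid k\mid n$ forces every unit modulo $n$ to be odd), whence $e(ry/k)=e((2s+1)/2)=-1$. With that correction, the three cases go through exactly as you outline: the factor is $1$ when $k\mid y$, $-1$ when $y\equiv k/2\pmod{k}$, and the geometric sums kill everything else since $(k/2)\nmid y$ in the remaining case.

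One further point worth making explicit: you verify that $j\mapsto jn/k$ is injective modulo $n$ on $\{0,\dots,k-1\}$, but writing $\sigma_X$ as a sum of exactly $k$ terms also tacitly assumes that the sets $\{jn/k+1: j\in J_+\}$ and $\{jn/k-1 : j\in J_-\}$ are disjoint, i.e., that $|A|=k$. This can only fail when $n/k\le 2$, but since the proposition (as stated in the paper) does not exclude that degenerate situation, a sentence noting the assumption would tighten the argument.
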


\section{Ellipses}

	Discretized ellipses appear frequently in the graphs of cyclic supercharacters.
	These, in turn, form primitive elements whence more complicated supercharacter plots emerge.
	In order to proceed, we recall the definition of a Gauss sum.
	Suppose that $m$ and $k$ are integers with $k>0$. If $\chi$ is a Dirichlet character modulo $k$, 
	then the \emph{Gauss sum} associated with $\chi$ is given by
	\begin{equation*}
		G(m,\chi)=\sum_{\ell=1}^k \chi(\ell)e\left(\frac{\ell m}{k}\right).
	\end{equation*}
	If $p$ is prime, the \emph{quadratic Gauss sum} $g(m;p)$ over $\Z/p\Z$ is given by 
	$g(m;p)=g(m,\chi)$, where $\chi(a)=\legendre{a}{p}$ is the Legendre symbol of $a$ and $p$. That is,
	\begin{equation*}
		g(m;p)=\sum_{\ell=0}^{k-1}e\left(\frac{m\ell^2}{p}\right).
	\end{equation*}
	We require the following well-known result \cite[Thm.~1.5.2]{berndt}.

	\begin{lem}
		If $p\equiv 1\pmod{4}$ is prime and $(m,p)=1$, then
		\begin{equation*}
			g(m;p)=\legendret{m}{p}\sqrt{p}.
			\label{thm:berndt}
		\end{equation*}
	\end{lem}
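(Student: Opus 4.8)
The plan is the classical two-stage route: first reduce to the case $m=1$ by a ``twisting'' identity, then evaluate $g(1;p)$, the one genuinely delicate point being the sign. \emph{Reduction to $m=1$.} Since the squaring map $\ell\mapsto\ell^2$ on $\Z/p\Z$ hits each nonzero quadratic residue exactly twice and $0$ exactly once, we have $g(m;p)=\sum_{\ell=0}^{p-1}\bigl(1+\legendre{\ell}{p}\bigr)e(m\ell/p)$. The sum $\sum_\ell e(m\ell/p)$ vanishes because $(m,p)=1$, leaving $g(m;p)=\sum_\ell\legendre{\ell}{p}e(m\ell/p)$, which is the Gauss sum $G(m,\chi)$ for $\chi$ the Legendre symbol. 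Substituting $\ell\mapsto m^{-1}\ell$ and using that the symbol is multiplicative with $\legendre{m^{-1}}{p}=\legendre{m}{p}$ gives $g(m;p)=\legendre{m}{p}\,g(1;p)$. It therefore suffices to prove $g(1;p)=\sqrt p$ when $p\equiv1\pmod4$.

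\emph{Modulus.} Expanding $|g(1;p)|^2=\sum_{k,\ell}\legendre{\ell}{p}\legendre{k}{p}e((\ell-k)/p)$, writing $\ell=kt$ for $k\ne0$, and summing the resulting geometric series over $k$ yields $|g(1;p)|^2=p$. Moreover $\overline{g(1;p)}=\sum_\ell\legendre{\ell}{p}e(-\ell/p)=\legendre{-1}{p}g(1;p)$, and $\legendre{-1}{p}=1$ since $p\equiv1\pmod4$, so $g(1;p)$ is real; hence $g(1;p)=\pm\sqrt p$.

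\emph{Sign.} The remaining task—excluding the minus sign—is the famous determination of the sign of the Gauss sum. I would use Schur's eigenvalue argument: form the $p\times p$ matrix $M=\bigl(e(jk/p)\bigr)_{0\le j,k<p}$, whose trace is exactly $g(1;p)$ by the reduction above, and observe that $M^2=p\Pi$, where $\Pi$ is the permutation matrix of $j\mapsto-j\bmod p$. Consequently every eigenvalue of $M$ lies in $\{\pm\sqrt p,\pm i\sqrt p\}$, and the cycle structure of $\Pi$ (one fixed point and $(p-1)/2$ transpositions) pins down, via $\tr M^2=p$, the combined multiplicities of $\pm\sqrt p$ and of $\pm i\sqrt p$; reality of $\tr M$ then forces the $i\sqrt p$ and $-i\sqrt p$ multiplicities to coincide, leaving a single sign ambiguity, which is resolved by evaluating $\det M$ as a Vandermonde determinant and reading off the parity of the $-\sqrt p$ multiplicity. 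The careful bookkeeping with these four eigenvalue multiplicities is the main obstacle and the only place real care is needed; the standard alternatives (Cauchy's or Dirichlet's contour-integral evaluations, or the functional equation of the Jacobi theta function) are no shorter. Since the result is entirely classical, in the text we simply invoke \cite[Thm.~1.5.2]{berndt}.
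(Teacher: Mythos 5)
The paper offers no proof of this lemma at all: it is stated as ``the following well-known result'' and disposed of with the citation \cite[Thm.~1.5.2]{berndt}, so there is no internal argument to compare yours against. Your sketch is the standard classical proof and is correct in outline. The reduction to $m=1$ via the identity $\#\{x : x^2\equiv\ell\} = 1+\legendre{\ell}{p}$, the computation $|g(1;p)|^2=p$, and the observation that $\legendre{-1}{p}=1$ forces $g(1;p)$ to be real for $p\equiv 1\pmod 4$ are all sound and complete as written. The sign determination is, as you say, the only genuinely hard step, and your description of Schur's argument is accurate: with multiplicities $a,b,c,d$ for the eigenvalues $\sqrt p,-\sqrt p,i\sqrt p,-i\sqrt p$ of the DFT matrix $M$, the relations $a+b+c+d=p$ and $\operatorname{tr}M^2=p$ give $a+b=(p+1)/2$ and $c+d=(p-1)/2$, reality of $\operatorname{tr}M$ gives $c=d$, and the Vandermonde evaluation of $\det M$ pins down $a-b$. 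That bookkeeping is left unexecuted in your sketch, but you acknowledge this and, like the authors, ultimately defer to the literature --- which is exactly what the paper does. In short: correct, fuller than the paper's treatment, and consistent with it.
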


	\begin{prop}\label{thm:ellipse}
		Suppose that $p| n$ and $p\equiv 1\pmod{4}$ is prime.  Let
		\begin{equation*}
			Q_p=\{m\in \Z/p\Z : \legendret{m}{p}=1\}
		\end{equation*}
		denote the set of distinct nonzero quadratic residues modulo $p$. 
		If \eqref{eq:pmorbit} holds where
		\begin{equation}\label{eq:ellipsej}
			J_+=\{aq+b : q\in Q_p\}\quad\mbox{and}\quad J_-=\{cq-b : q\in Q_p\}
		\end{equation}
		for integers $a,b,c$ coprime to $p$ with $\legendre{a}{p}=-\legendre{c}{p}$, then $\sigma_X(y)$ belongs to the real interval 
		$[1-p,p-1]$ whenever $p|y$, and otherwise belongs to the ellipse described by the equation $(\Re z)^2+(\Im z)^2/p=1$.
	\end{prop}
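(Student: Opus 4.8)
The plan is to expand $\sigma_X(y)$ directly from \eqref{eq-sigma} using the decomposition \eqref{eq:pmorbit} of $A$, and then to recognize the resulting inner sums over $\Z/p\Z$ as dilates of quadratic Gauss sums, which the preceding lemma evaluates because $p\equiv1\pmod4$.

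First I would note that, since $J_+$ and $J_-$ consist of residues built from $Q_p\subseteq\Z/p\Z$, the divisor $k$ in \eqref{eq:pmorbit} is $k=p$; hence every element of $X=Ar$ has the form $(jn/p+1)r$ with $j\in J_+$ or $(jn/p-1)r$ with $j\in J_-$. Substituting into $\sigma_X(y)=\sum_{x\in X}e(xy/n)$ and using $(n/p)/n=1/p$, the factor $e(\pm ry/n)$ pulls out of the two families of terms, leaving
\[
\sigma_X(y)=e\!\left(\tfrac{ry}{n}\right)\sum_{j\in J_+}e\!\left(\tfrac{jry}{p}\right)+e\!\left(-\tfrac{ry}{n}\right)\sum_{j\in J_-}e\!\left(\tfrac{jry}{p}\right).
\]
Substituting \eqref{eq:ellipsej} and factoring $e(\pm bry/p)$ out of the inner sums turns them into $\sum_{q\in Q_p}e(aqry/p)$ and $\sum_{q\in Q_p}e(cqry/p)$ (the maps $q\mapsto aq+b$ and $q\mapsto cq-b$ are injective mod $p$ since $(a,p)=(c,p)=1$). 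Writing $\mathbf 1_{Q_p}(t)=\tfrac12\bigl(1+\legendre{t}{p}\bigr)$ gives $\sum_{q\in Q_p}e(\alpha q/p)=\tfrac12\bigl(-1+\legendre{\alpha}{p}\,g(1;p)\bigr)$ for $p\nmid\alpha$, and the preceding lemma gives $g(1;p)=\sqrt p$.

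I would dispose of the case $p\mid y$ first: then each $e(jry/p)=1$, so each inner sum equals $|Q_p|=(p-1)/2$, whence $\sigma_X(y)=(p-1)\cos(2\pi ry/n)\in[1-p,p-1]$. Now suppose $p\nmid ry$ (if $p\mid r$ then $p\mid ry$ for all $y$ and only the first case arises). Put $\epsilon=\legendre{a}{p}$ and $\delta=\legendre{ry}{p}$; the hypothesis $\legendre{a}{p}=-\legendre{c}{p}$ gives $\legendre{ary}{p}=\epsilon\delta$ and $\legendre{cry}{p}=-\epsilon\delta$, so the two inner sums are $e(bry/p)\cdot\tfrac12(-1+\epsilon\delta\sqrt p)$ and $e(-bry/p)\cdot\tfrac12(-1-\epsilon\delta\sqrt p)$. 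Setting $\zeta=e\bigl(ry/n+bry/p\bigr)$, which is unimodular, the expression collapses:
\[
\sigma_X(y)=-\tfrac12(\zeta+\bar\zeta)+\tfrac12\epsilon\delta\sqrt p\,(\zeta-\bar\zeta)=-\Re{\zeta}+i\,\epsilon\delta\sqrt p\,\Im{\zeta}.
\]
Hence $\Re{\sigma_X(y)}=-\Re{\zeta}$ and $\Im{\sigma_X(y)}=\epsilon\delta\sqrt p\,\Im{\zeta}$, and since $(\epsilon\delta)^2=1$ we obtain $(\Re{\sigma_X(y)})^2+(\Im{\sigma_X(y)})^2/p=(\Re{\zeta})^2+(\Im{\zeta})^2=|\zeta|^2=1$, the asserted ellipse.

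The step I expect to be the main obstacle --- more conceptually than computationally --- is the Legendre-symbol bookkeeping in the last paragraph: it is precisely the hypothesis $\legendre{a}{p}=-\legendre{c}{p}$ that forces the two $\sqrt p$-contributions to enter with opposite signs, so that the two ``$-1$'' terms assemble into $-\Re{\zeta}$ while the two Gauss-sum terms assemble into $\pm i\sqrt p\,\Im{\zeta}$; without it one instead gets a sum of two points lying on two distinct ellipses. A minor secondary point is justifying $k=p$ and checking that the listing \eqref{eq:pmorbit} really enumerates the distinct elements of $A$ in the cases at hand (the index sets $J_+,J_-$ need not be disjoint, though the corresponding group elements are), together with the observation about $p\mid r$ already noted.
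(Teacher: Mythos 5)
Your proof is correct and takes essentially the same route as the paper's: both decompose $X$ via \eqref{eq:pmorbit} with $k=p$, factor out the unimodular phases $e(\pm(ry/n+bry/p))$, reduce the inner sums over $Q_p$ to quadratic Gauss sums (your indicator-function identity is just the paper's $(g(\alpha y;p)-1)/2$ in disguise), and use $\legendre{a}{p}=-\legendre{c}{p}$ to make the $\sqrt{p}$ terms enter with opposite signs, yielding $-\Re{\zeta}\pm i\sqrt{p}\,\Im{\zeta}$. The only difference is that you track a general $r$ and flag the degenerate case $p\mid r$, whereas the paper silently works with $r=1$; this is a harmless refinement, not a change of method.
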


	\begin{proof}
		For all $y$ in $\Z/n\Z$, we have
		\begin{align*}
			\sigma_X(y)&=\sum_{x\in A} e\left(\frac{xy}{n}\right)\\
			&=\sum_{j\in J_+}e\left(\frac{\left(\frac{jn}{p}+1\right)y}{n}\right)+\sum_{j\in J_-}e\left(\frac{(\frac{jn}{p}-1)y}{n}\right)\\
			&=\sum_{q\in Q_p}e\left(\frac{(aq+b)y}{p}+\frac{y}{n}\right)+\sum_{q\in Q_p}e\left(\frac{(cq-b)y}{p}-\frac{y}{n}\right)\\
			&=e\left(\frac{by}{p}+\frac{y}{n}\right)\sum_{q\in Q_p} e\left(\frac{aqy}{p}\right)
				+e\left(-\frac{by}{p}-\frac{y}{n}\right)\sum_{q\in Q_p} e\left(\frac{cqy}{p}\right)\\
			&=e(\theta_y)\sum_{\ell=1}^{(p-1)/2} e\left(\frac{a\ell^2y}{p}\right)+\overline{e(\theta_y)}
				\sum_{\ell=1}^{(p-1)/2} e\left(\frac{c\ell^2y}{p}\right),
		\end{align*}
		where $\theta_y=\tfrac{(bn+p)y}{pn}$. If $p|y$, then $e(\theta_y)=e(\frac{y}{n})$ and $e(\frac{a\ell^2y}{p})=e(\frac{c\ell^2y}{p})=1$, so
		\begin{equation*}
			\sigma_X(y)=\frac{(p-1)}{2}\left(e\left(\frac{y}{n}\right)+\overline{e\left(\frac{y}{n}\right)}\right)=(p-1)\cos\frac{2\pi y}{n}.
		\end{equation*}
		If not, then $(p,y)=1$, so
		\begin{align}\label{eq:ellipse}
			\sigma_X(y)&=\frac{e\left(\theta_y\right)\left(g(ay;p)-1\right)+\overline{e(\theta_y)}(g(cy;p)-1)}{2}\nonumber\\
			&=\frac{e(\theta_y)g(ay;p)+\overline{e(\theta_y)}g(cy;p)}{2}-\cos2\pi\theta_y\nonumber\\
			&=\frac{\sqrt{p}}{2}\left(\legendret{ay}{p}e(\theta_y)+\legendret{cy}{p}\overline{e(\theta_y)}\right)-\cos2\pi\theta_y\\
			&=\pm\legendret{y}{p}\frac{\sqrt{p}}{2}\left(e(\theta_y)-\overline{e(\theta_y)}\right)-\cos2\pi\theta_y\nonumber\\
			&=\pm i\legendret{y}{p}\sqrt{p}\sin2\pi\theta_y-\cos2\pi\theta_y,\nonumber
		\end{align}
		where \eqref{eq:ellipse} follows from Lemma \ref{thm:berndt}.
	\end{proof}
	
	\begin{figure}[h]
		\centering
	        \begin{subfigure}[b]{0.30\textwidth}
	                \centering
	                \includegraphics[width=\textwidth]{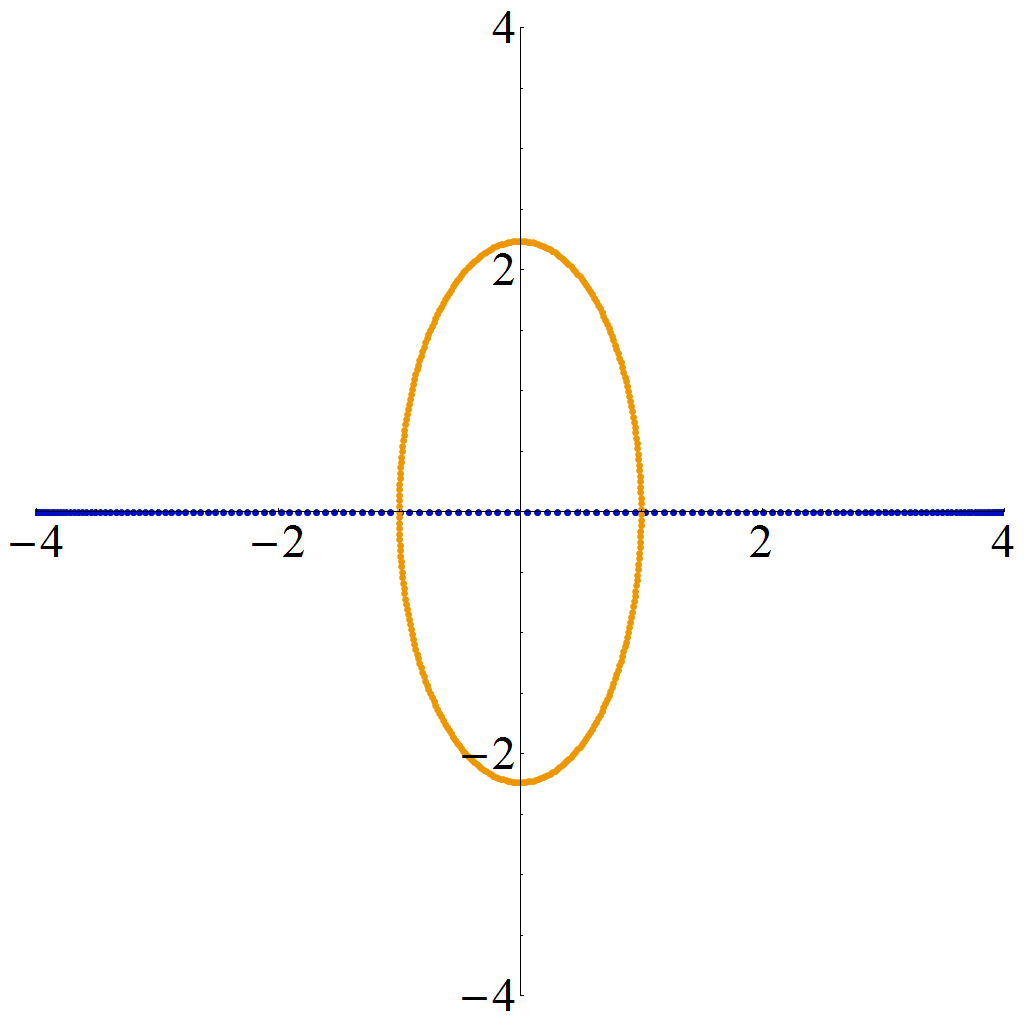}
	                \caption{{\scriptsize $n=1535$, $A=\langle 613\rangle$}}
	                \label{fig:1535_613}
	        \end{subfigure}
			\quad
	                \begin{subfigure}[b]{0.30\textwidth}
	                \centering
	                \includegraphics[width=\textwidth]{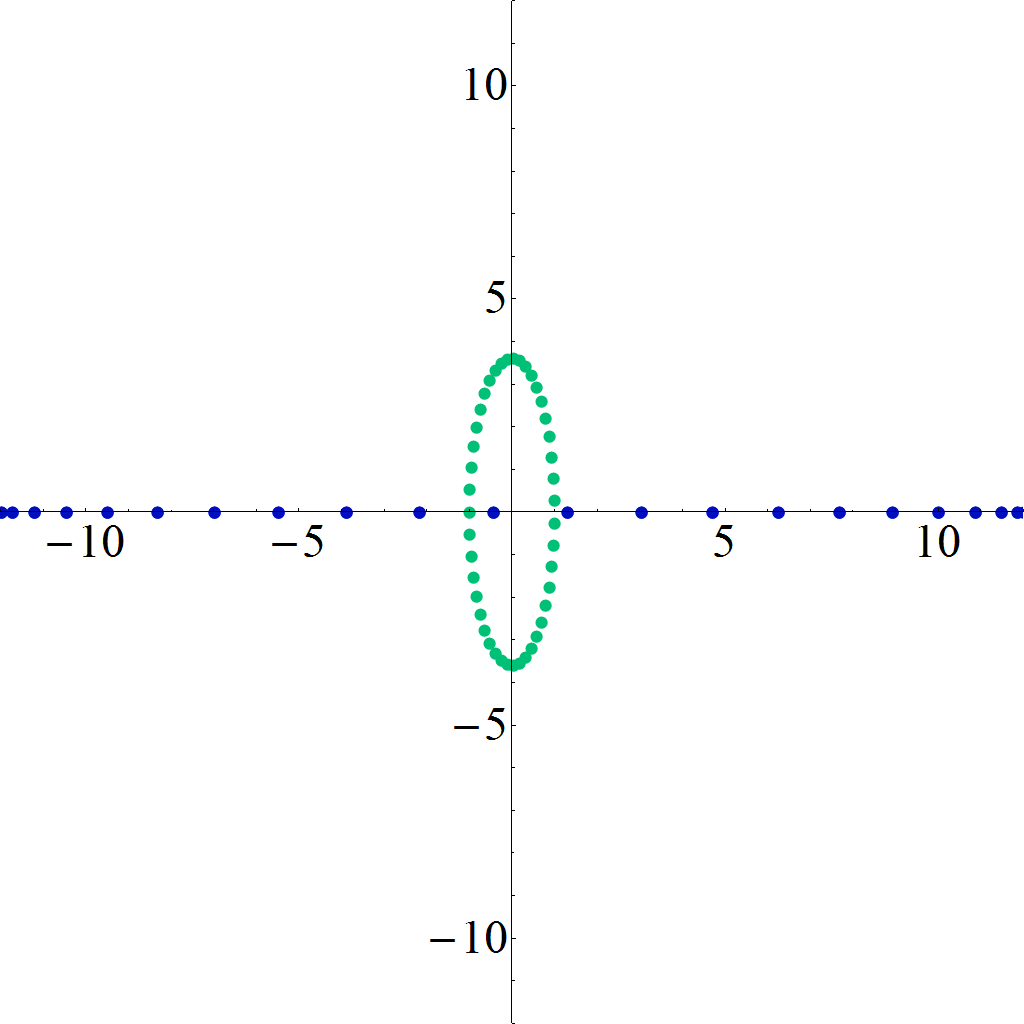}
	                \caption{{\scriptsize $n=559$, $A=\langle 171\rangle$}}
	                \label{fig:559_171}
	        \end{subfigure}
	        \quad
	                \begin{subfigure}[b]{0.30\textwidth}
	                \centering
	                \includegraphics[width=\textwidth]{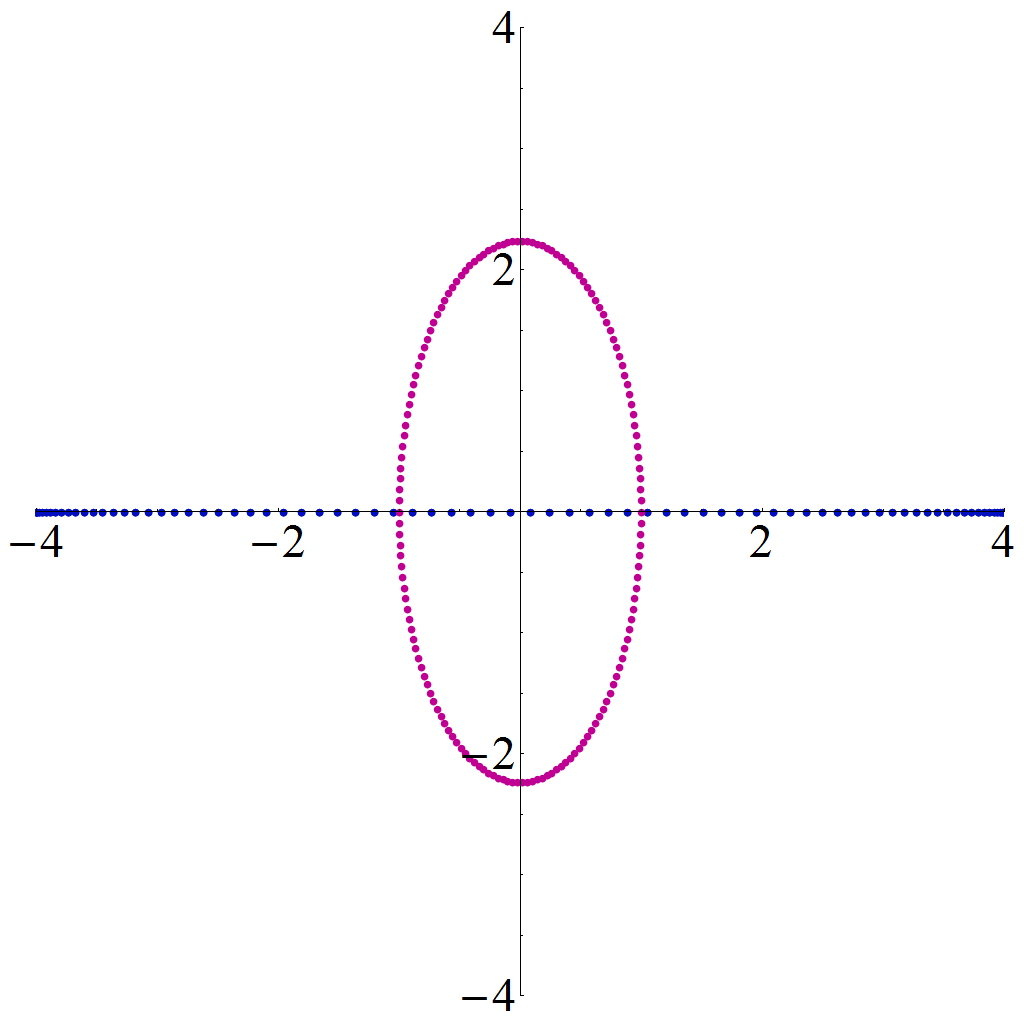}
	                \caption{{\scriptsize $n=770$, $A=\langle 153\rangle$}}
	                \label{fig:770_153}
	        \end{subfigure}
			\\
	        \begin{subfigure}[b]{0.30\textwidth}
	                \centering
	                \includegraphics[width=\textwidth]{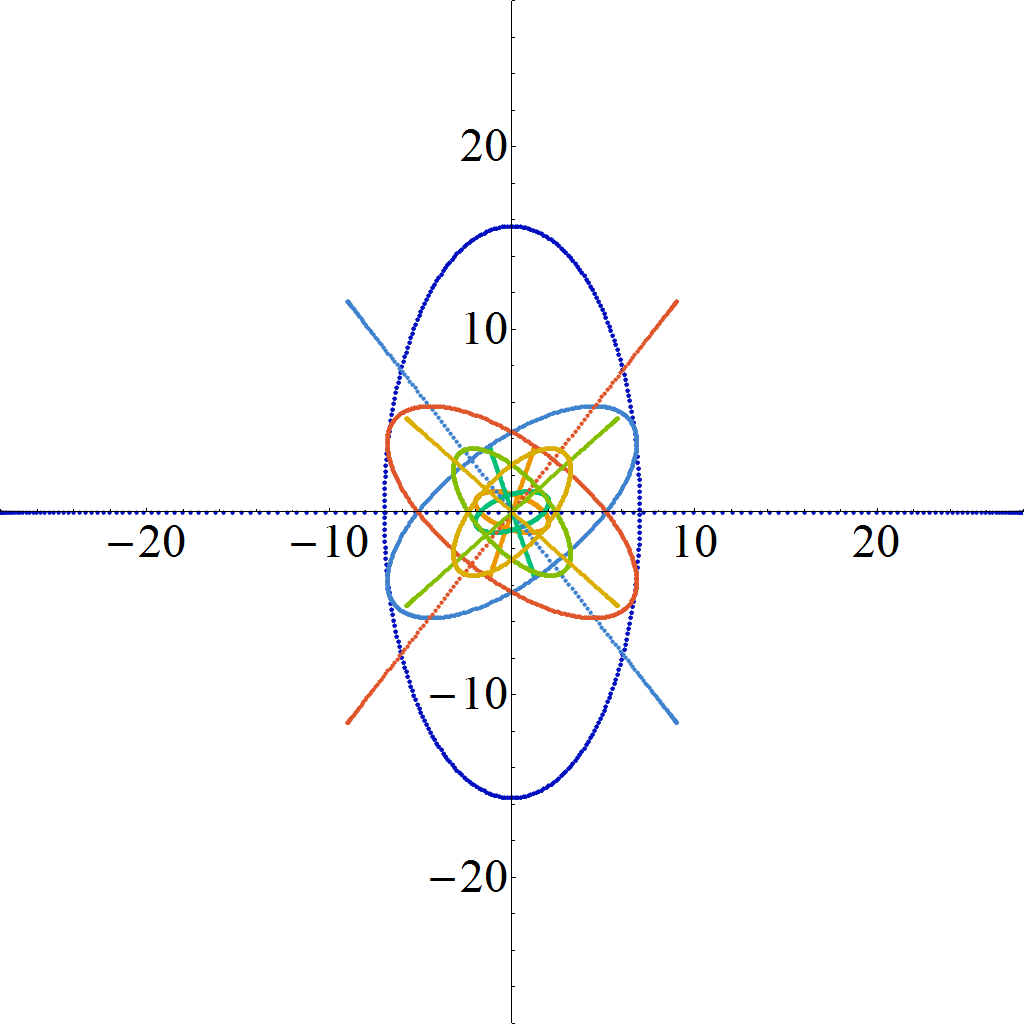}
	                \caption{{\scriptsize $n=1535\cdot 43$, $A=\langle 613\rangle$}}
	                \label{fig:1535_43}
	        \end{subfigure}
	        \quad
	        \begin{subfigure}[b]{0.30\textwidth}
	                \centering
	                \includegraphics[width=\textwidth]{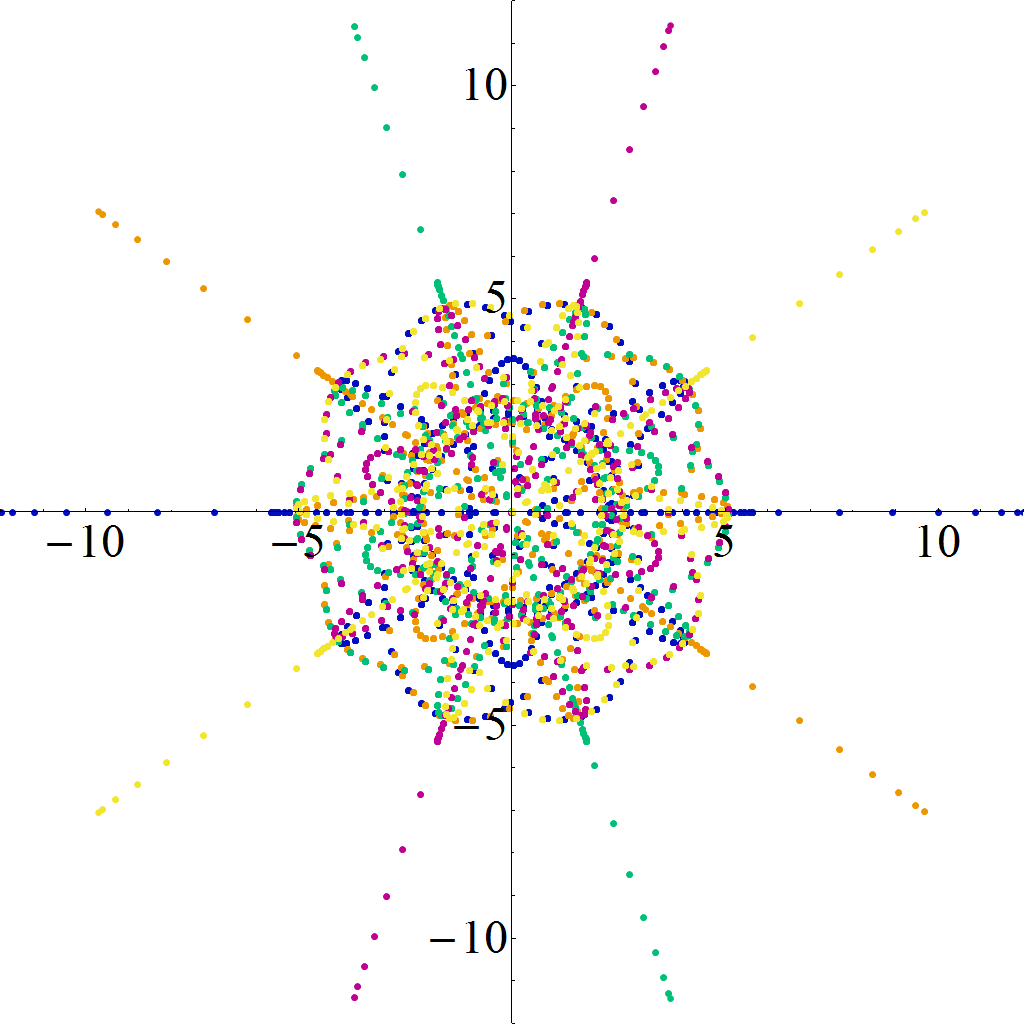}
	                \caption{{\scriptsize $n=559\cdot 7\cdot 5$, $A=\langle 171\rangle$}}
	                \label{fig:559_35}
	        \end{subfigure}
		\quad
	        \begin{subfigure}[b]{0.30\textwidth}
	                \centering
	                \includegraphics[width=\textwidth]{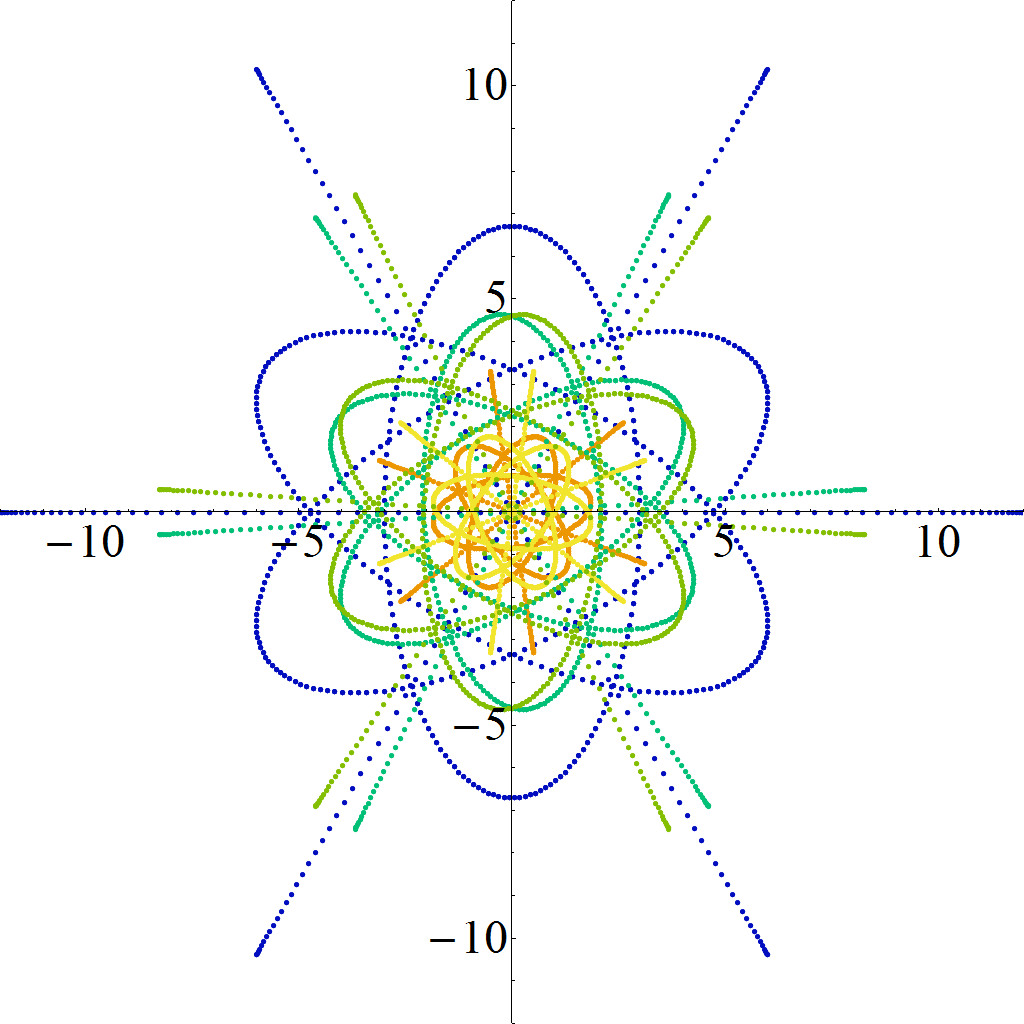}
	                \caption{{\scriptsize $n=770\cdot13\cdot3$, $A=\langle 1693\rangle$}}
	                \label{fig:770_39}
	        \end{subfigure}
	        \caption{Graphs of cyclic supercharacters $\sigma_X$ of $\Z/n\Z$, where $X=A1$. Propositions \ref{thm:master}, \ref{thm:kfold} and \ref{thm:ellipse} can be used to produce supercharacters 
	        whose images feature elliptical patterns. See Figure \ref{fig:nicepics1} for a brief discussion of colorization.}
		\label{fig:ellipses}
	\end{figure}

	\begin{example}\label{eg:singell}
		Let $n=d=1088=4^3\cdot 17$ and consider the orbit $X$ of $r=1$ under the action of 
		$A=\langle 63\rangle=\langle \frac{n}{17}-1\rangle$ on $\Z/n\Z$. In this situation, 
		illustrated by Figure \ref{fig:1535_613}, $\eqref{eq:pmorbit}$ holds with $J_+=\{0,4\}=2Q_{5}+2$ and $J_-=\{2,4\}=Q_{17}+3$. Figure \ref{fig:559_171} 
		illustrates the situation $J_+=Q_{13}+3$ and $J_-=2Q_{13}-3$, while Figure \ref{fig:770_153} 
		illustrates $J_+=Q_5+1$ and $J_-=2Q_2-1$. The remainder of Figure \ref{fig:ellipses} 
		demonstrates the effect of using Propositions \ref{thm:master}, \ref{thm:kfold} and \ref{thm:ellipse} 
		to produce supercharacters whose images feature ellipses.
	\end{example}

\section{Asymptotic behavior}\label{SectionAsymptotic}
	We now turn our attention to an entirely different matter, namely the asymptotic behavior of cyclic supercharacter plots.
	To this end we begin by recalling several definitions and results concerning uniform distribution modulo $1$.	
	The \emph{discrepancy} of a finite subset $S$ of $[0,1)^m$ is the quantity
	\begin{equation*}
		D(S) = \sup_B \left| \frac{ |B\cap S|}{|S|} - \mu(B)\right|,
	\end{equation*}
	where the supremum runs over all boxes $B = [a_1,b_1)\times\cdots \times [a_m,b_m)$ 
	and $\mu$ denotes $m$-dimensional Lebesgue measure.  We say that 
	a sequence $S_n$ of finite subsets of $[0,1)^d$ is \emph{uniformly distributed} 
	if $\lim_{n\to\infty} D(S_n) = 0$.  If $S_n$ is a sequence of finite subsets in $\R^m$,
	we say that $S_n$ is \emph{uniformly distributed mod $1$} if the corresponding
	sequence of sets $\big\{  ( \{x_1\},\{x_2\},\ldots,\{x_d\}) : (x_1,x_2,\ldots,x_m) \in S_n\big\}$
	is uniformly distributed in $[0,1)^m$.  Here $\{x\}$ denotes the fractional part $x-\lfloor x \rfloor$
	of a real number $x$.  The following fundamental result is due to H.~Weyl \cite{weyl}.
	
	\begin{lem}
		A sequence of finite sets $S_n$ in $\R^m$ is uniformly distributed modulo $1$ if and only if
		\begin{equation*}
			\lim_{n\to\infty} \frac{1}{|S_n|} \sum_{\vec{u}\in S_n} e( \vec{u} \cdot \vec{v}) = 0
		\end{equation*}
		for each $\vec{v}$ in $\Z^m$.
	\end{lem}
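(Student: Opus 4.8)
The result is classical and due to Weyl; the plan is to prove the two implications separately, with the forward direction a routine consequence of the definition of discrepancy and the reverse direction carrying the real content via approximation by trigonometric polynomials. Throughout I restrict to $\vec{v}\in\Z^m\setminus\{\vec{0}\}$, since for $\vec{v}=\vec{0}$ the exponential sum is identically $1$; the relevant fact is that $\int_{[0,1)^m} e(\vec{x}\cdot\vec{v})\,d\mu(\vec{x})$ equals $1$ when $\vec{v}=\vec{0}$ and $0$ otherwise. For the forward implication, suppose $D(S_n)\to 0$. By definition this says exactly that $\frac{1}{|S_n|}\sum_{\vec{u}\in S_n}\mathbf{1}_B(\{\vec{u}\})\to\mu(B)$, uniformly over all boxes $B$. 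By linearity the average $\frac{1}{|S_n|}\sum_{\vec{u}\in S_n}f(\{\vec{u}\})$ then converges to $\int_{[0,1)^m}f\,d\mu$ for every step function $f$ on $[0,1)^m$, and hence, by sandwiching, for every Riemann-integrable $f$; applying this to the real and imaginary parts of $\vec{x}\mapsto e(\vec{x}\cdot\vec{v})$ gives the vanishing of the Weyl sums.

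For the reverse implication, suppose $\frac{1}{|S_n|}\sum_{\vec{u}\in S_n}e(\vec{u}\cdot\vec{v})\to 0$ for every nonzero $\vec{v}\in\Z^m$. Including the trivial case $\vec{v}=\vec{0}$ and taking linear combinations, we obtain $\frac{1}{|S_n|}\sum_{\vec{u}\in S_n}P(\{\vec{u}\})\to\int_{[0,1)^m}P\,d\mu$ for every trigonometric polynomial $P$, that is, every finite linear combination of the characters $\vec{x}\mapsto e(\vec{x}\cdot\vec{v})$. The key analytic input is that trigonometric polynomials are uniformly dense in the space of continuous $1$-periodic functions on $\R^m$ (Stone--Weierstrass, or convolution with a Fej\'er-type kernel), so the displayed convergence extends to every continuous $1$-periodic $f$. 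Fixing a box $B\subseteq[0,1)^m$ and $\epsilon>0$, choose continuous $1$-periodic functions with $f^-_\epsilon\le\mathbf{1}_B\le f^+_\epsilon$ and $\int(f^+_\epsilon-f^-_\epsilon)\,d\mu<\epsilon$; comparing the averages against $f^\pm_\epsilon$ and letting $\epsilon\to 0$ shows $\frac{1}{|S_n|}\sum_{\vec{u}\in S_n}\mathbf{1}_B(\{\vec{u}\})\to\mu(B)$ for each fixed $B$.

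The remaining step is to upgrade this pointwise-in-$B$ statement to the uniform conclusion $D(S_n)\to 0$, and I expect this bookkeeping, together with the appeal to density of trigonometric polynomials, to be the part requiring the most care (though none of it is deep). I would argue by grid refinement: partition $[0,1)^m$ into the $N^m$ subcubes of side $1/N$; any box $B$ is squeezed between a union $B^-$ of such subcubes with $B^-\subseteq B$ and a union $B^+\supseteq B$, with $\mu(B^+\setminus B^-)\le C_m/N$ for a dimensional constant $C_m$. Since there are only finitely many subcubes, the convergence established above holds uniformly over the finitely many sets $B^\pm$, so $\limsup_{n\to\infty}D(S_n)\le C_m/N$; letting $N\to\infty$ completes the proof.
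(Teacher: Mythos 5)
The paper does not prove this lemma at all --- it is quoted as Weyl's classical criterion with a citation to Weyl --- so there is no in-paper argument to compare yours against; what you have written is the standard textbook proof (as in Kuipers--Niederreiter), and it is correct. The forward direction via step functions and Riemann-integrable sandwiching is routine, as you say. In the reverse direction, the chain ``trigonometric polynomials $\to$ continuous $1$-periodic functions (Stone--Weierstrass or Fej\'er) $\to$ indicators of boxes (squeeze) $\to$ uniformity over all boxes (grid refinement)'' is exactly the right way to upgrade the pointwise-in-$B$ convergence to $D(S_n)\to 0$, and your bound $\limsup_{n\to\infty} D(S_n)\le C_m/N$ rests correctly on the observation that for each fixed $N$ only finitely many grid-aligned sets $B^{\pm}$ occur, so the convergence is uniform over them. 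One small point to spell out in a fully written version: for a box $B$ abutting the boundary of $[0,1)^m$, a continuous $1$-periodic majorant $f^+_\epsilon\ge \mathbf{1}_B$ is forced by periodicity to be close to $1$ in a thin slab near the opposite face as well; this adds only $O(\epsilon)$ to $\int(f^+_\epsilon-f^-_\epsilon)\,d\mu$ and does not disturb the squeeze. With that caveat addressed, the argument is complete.
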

	
	In the following, we suppose that $q=p^a$ is a nonzero power of an odd prime and that $|X| = d$ is a divisor of $p-1$.
	Let $\omega_q$ denote a primitive $d$th root of unity modulo $q$ and let
	\begin{equation*}
		S_q = \left\{ \frac{\ell}{q}(1,\omega_q, \omega_q^2,\dots, \omega_q^{\phi(d)-1}) : \ell=0,1,\ldots,q-1 \right\}
		\subseteq [0,1)^{\phi(d)}
	\end{equation*}
	where $\phi$ denotes the Euler totient function.
	The following lemma of Myerson, whose proof we have adapted to suit our notation, can be found in \cite[Thm.~12]{myerson}.
	
	\begin{lem}\label{myersonlemma}
		The sets $S_q$ for $q \equiv 1 \pmod{d}$ are uniformly distributed modulo $1$.
	\end{lem}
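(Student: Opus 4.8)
The plan is to verify Weyl's criterion (the preceding lemma) by a direct computation of the exponential sum. Fix a nonzero vector $\vec{v}=(v_0,v_1,\dots,v_{\phi(d)-1})$ in $\Z^{\phi(d)}$ and set $P(T)=\sum_{j=0}^{\phi(d)-1}v_jT^j\in\Z[T]$, a nonzero polynomial of degree at most $\phi(d)-1$. For $\vec{u}_\ell=\tfrac{\ell}{q}(1,\omega_q,\dots,\omega_q^{\phi(d)-1})$ we have $\vec{u}_\ell\cdot\vec{v}=\tfrac{\ell}{q}P(\omega_q)$, where $P(\omega_q)$ is taken as an ordinary integer after reduction modulo $q$. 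Hence
\begin{equation*}
\frac{1}{|S_q|}\sum_{\vec{u}\in S_q}e(\vec{u}\cdot\vec{v})=\frac{1}{q}\sum_{\ell=0}^{q-1}e\!\left(\frac{\ell P(\omega_q)}{q}\right),
\end{equation*}
and this geometric sum equals $1$ if $q\mid P(\omega_q)$ and $0$ otherwise. So it suffices to show that $q\mid P(\omega_q)$ for only finitely many $q$ in the sequence.

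The second step is to show $q\mid\Phi_d(\omega_q)$, where $\Phi_d$ is the $d$th cyclotomic polynomial. Since $d\mid p-1$ we have $p\nmid d$, so $T^d-1$ is separable modulo $p$ and the reductions $\Phi_e\bmod p$ for $e\mid d$ are squarefree and pairwise coprime. Because $\omega_q$ has multiplicative order exactly $d$ in $(\Z/q\Z)^{\times}$ and the reduction $(\Z/p^a\Z)^{\times}\to(\Z/p\Z)^{\times}$ has kernel of order $p^{a-1}$, which is coprime to $d$, the residue of $\omega_q$ modulo $p$ still has order exactly $d$; it is therefore a root of $\Phi_d\bmod p$ and of no other $\Phi_e\bmod p$. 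Consequently $\Phi_e(\omega_q)$ is a unit modulo $p$, hence modulo $q=p^a$, for every $e\mid d$ with $e\neq d$. Since $\prod_{e\mid d}\Phi_e(\omega_q)=\omega_q^d-1\equiv0\pmod{q}$, cancelling the unit factors yields $\Phi_d(\omega_q)\equiv0\pmod{q}$.

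The third step exploits coprimality to bound the exceptional moduli. As $\Phi_d$ is irreducible over $\Q$ with degree $\phi(d)>\deg P$ and $P\neq0$, the polynomials $P$ and $\Phi_d$ are coprime in $\Q[T]$; clearing denominators (equivalently, using the resultant) gives $A(T)P(T)+B(T)\Phi_d(T)=R$ for some $A,B\in\Z[T]$ and a nonzero integer $R=\pm\operatorname{Res}(P,\Phi_d)$. Evaluating at $T=\omega_q$ and reducing modulo $q$: if $q\mid P(\omega_q)$, then, since also $q\mid\Phi_d(\omega_q)$ by the previous step, we get $q\mid R$, forcing $q\le|R|$. Thus the Weyl sum vanishes for every $q>|R|$, so it tends to $0$ as $q\to\infty$, and Weyl's criterion gives uniform distribution modulo $1$.

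The one genuinely delicate point is the second step: promoting "$\omega_q$ has order $d$" to the divisibility $q=p^a\mid\Phi_d(\omega_q)$ rather than merely $p\mid\Phi_d(\omega_q)$. This relies on the separability of $T^d-1$ modulo $p$ — hence the pairwise coprimality of the cyclotomic factors, so that the "wrong" factors $\Phi_e(\omega_q)$ are units and can be removed — together with the fact that reduction modulo $p$ preserves the order of $\omega_q$ because that order is prime to $p$. Everything else is the standard Weyl-plus-resultant argument, and the precise size of $R$ is irrelevant since only its nonvanishing and finiteness are used.
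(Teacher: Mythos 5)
Your proof is correct and follows essentially the same route as the paper: the Weyl criterion, the evaluation of the geometric sum as $q$ or $0$ according to whether $q\mid P(\omega_q)$, and a B\'ezout/resultant identity $A(T)P(T)+B(T)\Phi_d(T)=R$ to bound the exceptional moduli. The one place you go beyond the paper is your second step, where you carefully establish $q\mid\Phi_d(\omega_q)$ via separability of $T^d-1$ modulo $p$ and preservation of the order of $\omega_q$ under reduction; the paper's proof uses this divisibility implicitly (its displayed congruence is stated only modulo $p$ even though the conclusion $q\mid n$ requires it modulo $q$), so your added justification closes a genuine small gap rather than changing the argument.
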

	
	\begin{proof}
		Fix a nonzero vector $\vec{v} = (a_0,a_1,\ldots,a_{\phi(d)-1})$ in $\Z^{\phi(d)}$ and let 
		\begin{equation*}
		f(t) = a_0 + a_1 t + \cdots + a_{\phi(d)-1}t^{\phi(d)-1}.
		\end{equation*}
		Let $r=q/(q,f(\omega_q))$, and observe that
		\begin{align*}
			\sum_{\vec{u} \in S_q} e( \vec{u}\cdot \vec{v})
			&= \sum_{\ell=0}^{q-1} e\left( \frac{f(\omega_q) \ell}{q} \right)\\
			&= \sum_{m=0}^{q/r-1} \sum_{\ell=mr}^{(m+1)r-1} e\left(\frac{f(\omega_q)\ell}{r}\right)\\
			&=\frac{q}{r}\sum_{\ell=0}^{r-1} e\left(\frac{f(\omega_q)\ell}{r}\right)\\
			&= 
			\begin{cases}
				q & \mbox{if } q |f(\omega_q),\\
				0 & \mbox{else}.\\
			\end{cases}
		\end{align*}
		Having fixed $d$ and $\vec{v}$, we claim that the sum above is nonzero for only finitely many $q \equiv 1 \pmod{d}$.
		Letting $\Phi_d$ denote the $d$th cyclotomic polynomial, recall that $\deg \Phi_d = \phi(d)$ and
		that $\Phi_d$ is the minimal polynomial of any primitive $d$th root of unity. 
		Clearly the gcd of $f(t)$ and $\Phi_d(t)$ as polynomials in $\Q[t]$ is in $\Z$. 
		Thus there exist $a(t)$ and $b(t)$ in $\Z[t]$ so that
		\begin{equation*}
			a(t) \Phi_d(t) + b(t) f(t) = n
		\end{equation*}
		for some integer $n$. Passing to $\Z/q\Z$ and letting $t = \omega_q$, we find that
		$b(\omega_q) f(\omega_q) \equiv n \pmod{p}$.  This means that $q|f(\omega_q)$ implies $q|n$,
		which can occur for only finitely many prime powers $q$.
		Putting this all together, we find that for all $\vec{v}$ in $\Z^{\phi(d)}$ the following holds:
		\begin{equation*}
			\lim_{ \substack{ q \to \infty \\ q \equiv 1 \!\!\!\!\!\pmod{d}}}
			\frac{1}{|S_q|} \sum_{\vec{u} \in S_q} e( \vec{u} \cdot \vec{v}) = 0.
		\end{equation*}
		By Weyl's Criterion, it follows that the sets $S_q$
		are uniformly distributed mod $1$ as $q \equiv 1 \pmod{d}$ tends to infinity.
	\end{proof}

	\begin{thm}\label{TheoremMain}
		Let $\sigma_X$ be a cyclic supercharacter of $\Z/q\Z$, where $q=p^a$ is a nonzero power of an odd prime. If $X=A1$ and $|X|=d$ divides $p-1$, 
		then the image of $\sigma_X$ is contained in the image of the function $g:[0,1)^{\phi(d)}\to\C$ defined by
		\begin{equation}\label{eq-Mapping}
			g(z_1,z_2,\ldots,z_{\phi(d)}) = \sum_{k=0}^{d-1} \prod_{j=0}^{\phi(d)-1} z_{j+1}^{b_{k,j}}
		\end{equation}
		where the integers $b_{k,j}$ are given by
		\begin{equation}\label{bkj}
			t^k\equiv \sum_{j=0}^{\phi(d)-1} b_{k,j} t^j \pmod{\Phi_d(t)}.
		\end{equation}
		For a fixed $d$, as $q$ becomes large, the image of $\sigma_X$ fills out the image of $g$,
		in the sense that, given $\epsilon>0$, there exists some $q\equiv 1 \pmod{d}$ such that if $\sigma_X:\Z/q\Z\to\C$ 
		is a cyclic supercharacter with $|X|=d$, then every open ball of radius $\epsilon > 0$ in the image of $g$ 
		has nonempty intersection with the image of $\sigma_X$.
	\end{thm}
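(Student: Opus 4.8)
The plan is to identify the image of $\sigma_X$ with $g(S_q)$, where $S_q$ is the set appearing in Lemma~\ref{myersonlemma}, and then to extract the filling-out statement from the uniform distribution of $S_q$ together with the uniform continuity of $g$ on the torus.

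First I would establish the algebraic identity behind \eqref{eq-Mapping}. Since $d\mid p-1\mid p^{a-1}(p-1)$, the cyclic group $(\Z/q\Z)^\times$ has a unique subgroup of order $d$, namely $A$; fix a generator $\omega_q$, so that $X=A1=\{\omega_q^k:0\le k\le d-1\}$ and $\sigma_X(y)=\sum_{k=0}^{d-1}e(\omega_q^k y/q)$. The arithmetic heart of the matter is the claim $\Phi_d(\omega_q)\equiv 0\pmod q$. To prove it, note that $d\mid p-1$ forces $p\nmid d$, so $t^d-1=\prod_{e\mid d}\Phi_e(t)$ is separable modulo $p$ with pairwise coprime factors; moreover $\omega_q$, having order $d$ with $d\mid p-1$, lies in the unique subgroup of order $p-1$ of $(\Z/p^a\Z)^\times$, which reduction mod $p$ carries isomorphically onto $(\Z/p\Z)^\times$, so $\omega_q$ still has order exactly $d$ modulo $p$. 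Hence $p\mid\Phi_d(\omega_q)$ while $p\nmid\Phi_e(\omega_q)$ for every proper divisor $e$ of $d$; since $\prod_{e\mid d}\Phi_e(\omega_q)=\omega_q^d-1\equiv 0\pmod q$ and all these factors except $\Phi_d(\omega_q)$ are units mod $p$, we conclude $q\mid\Phi_d(\omega_q)$. Because $\Phi_d$ is monic, \eqref{bkj} amounts to $t^k-\sum_{j=0}^{\phi(d)-1}b_{k,j}t^j=\Phi_d(t)h_k(t)$ for some $h_k\in\Z[t]$; evaluating at $t=\omega_q$ and reducing modulo $q$ gives $\omega_q^k\equiv\sum_{j=0}^{\phi(d)-1}b_{k,j}\omega_q^j\pmod q$ for every $k$.

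Substituting this congruence into $\sigma_X(y)=\sum_k e(\omega_q^k y/q)$ and factoring each exponential gives
\[
\sigma_X(y)=\sum_{k=0}^{d-1}\prod_{j=0}^{\phi(d)-1}e\!\left(\frac{\omega_q^j y}{q}\right)^{\!b_{k,j}}=g\!\left(\left\{\tfrac{y}{q}\right\},\left\{\tfrac{\omega_q y}{q}\right\},\dots,\left\{\tfrac{\omega_q^{\phi(d)-1}y}{q}\right\}\right),
\]
reading $g$ from \eqref{eq-Mapping} with each coordinate $z_i$ understood as $e(\theta_i)$, $\theta_i\in[0,1)$. As $y$ ranges over $\Z/q\Z$ the argument ranges over exactly $S_q$, so $\im\sigma_X=g(S_q)\subseteq g([0,1)^{\phi(d)})=\im g$, which is the containment. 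For the density, note that $g$ descends to a continuous, hence uniformly continuous, function on the compact torus $(\R/\Z)^{\phi(d)}$, so $\im g$ is compact. Given $\epsilon>0$, choose $\delta>0$ so that torus-points within distance $\delta$ have $g$-values within $\epsilon$; by Lemma~\ref{myersonlemma}, $D(S_q)\to 0$ as $q\equiv 1\pmod d$ tends to infinity, and covering $[0,1)^{\phi(d)}$ by finitely many boxes of diameter below $\delta$ and positive measure shows $S_q$ is $\delta$-dense once $q$ is large enough. Fixing such a $q$ and writing any $w\in\im g$ as $w=g(\theta)$, any $\theta'\in S_q$ within $\delta$ of $\theta$ has $g(\theta')\in\im\sigma_X$ within $\epsilon$ of $w$; hence every open $\epsilon$-ball meeting $\im g$ also meets $\im\sigma_X$, as claimed.

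The one genuinely delicate point is the congruence $\Phi_d(\omega_q)\equiv 0\pmod q$ for the prime-power modulus $q=p^a$: modulo $p$ it is the classical description of the roots of $\Phi_d$, but passing to $p^a$ needs the explicit structure of $(\Z/p^a\Z)^\times$ and the separability of $t^d-1$ mod $p$. Everything after that is the bookkeeping above together with the routine implication from vanishing discrepancy to $\delta$-density.
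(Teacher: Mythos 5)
Your proof is correct and follows essentially the same route as the paper's: reduce $\omega_q^k$ modulo $\Phi_d$ to get $\omega_q^k\equiv\sum_j b_{k,j}\omega_q^j\pmod q$, rewrite $\sigma_X(y)$ as $g$ evaluated on the point of $S_q$ indexed by $y$, and invoke Myerson's lemma for density. The only difference is that you supply details the paper leaves implicit, namely the verification that $q\mid\Phi_d(\omega_q)$ for the prime-power modulus and the uniform-continuity/discrepancy argument turning uniform distribution into the $\epsilon$-ball statement; both additions are sound.
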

	
	\begin{proof}
		Let $\omega_q$ be a primitive $d$th root of unity modulo $q$, so that $A=\langle \omega_q\rangle$ in $(\Z/q\Z)^{\times}$. Recall that $\{1, e(\frac{1}{d}),\dots, e(\frac{\phi(d)-1}{d}) \}$ is a $\Z$-basis for the ring of integers 
		of the cyclotomic field $\Q(e(\frac{1}{d}))$ \cite[Prop.~10.2]{neukirch}. 
		For $k=0,1,\ldots, d-1$, the integers $b_{k,j}$ in the expression
		\begin{equation*}
			e\left(\frac{k}{d}\right)= \sum_{j=0}^{\phi(d)-1} b_{k,j} e\left(\frac{j}{d}\right),
		\end{equation*}
		are determined by \eqref{bkj}. In particular, it follows that
		\begin{equation*}
			\omega_q^k\equiv \sum_{j=0}^{\phi(d)-1} b_{k,j} \omega_q^{j} \;\pmod{q}.
		\end{equation*}
		We have
		\begin{equation*}
			\sigma_X(y) 
			= \sum_{x \in X} e\left( \frac{x y}{q} \right) = \sum_{k=0}^{d-1} e\left(y \frac{\omega_q^k}{q} \right) 
			= \sum_{k=0}^{d-1} e\left(y \sum_{j=0}^{\phi(d)-1} b_{k,j} \frac{ \omega_q^{j}}{q}\right),
		\end{equation*}
		from which it follows that the image of $\sigma_X$ is contained in the image of the function
		$g:\T^{\phi(d)}\to\C$ defined by \eqref{eq-Mapping}.
		The density claim now follows immediately from Lemma \ref{myersonlemma}.
	\end{proof}

	In combination with Propositions \ref{thm:master} and \ref{primescale}, the preceding theorem characterizes the boundary curves of cyclic supercharacters with prime power moduli. If $d$ is even, then $X$ is closed under negation, so $\sigma_X$ is real. If $d=k^a$ where $k$ is an odd prime, then $g:\T^{\phi(k^a)}\to\C$ is given by
$$g(z_1,z_2,\cdots,z_{\phi(d)})=\sum_{j=1}^{\phi(d)} z_j + \sum_{j=1}^{k^{a-1}}\prod_{\ell=0}^{k-2} z^{-1}_{j+\ell k^{a-1}}.$$
A particularly concrete manifestation of our result is Theorem \ref{TheoremHypo},
	whose proof we present below.
	Recall that a \emph{hypocycloid} is a planar curve obtained by tracing the path of a distinguished point on a small circle 
	as it rolls within a larger circle. Rolling a circle of integral radius $\lambda$ within a circle of integral radius 
	$\kappa$, where $\kappa>\lambda$, yields the parametrization
	$\theta\mapsto(\kappa-\lambda)e^{i\theta}+\lambda e^{(1-\kappa/\lambda)i\theta}$
	of the hypocycloid centered at the origin that contains the point $\kappa$ and has precisely $\kappa$ cusps.

	\begin{figure}[H]
		\centering
	        \begin{subfigure}[b]{0.30\textwidth}
	                \centering
	                \includegraphics[width=\textwidth]{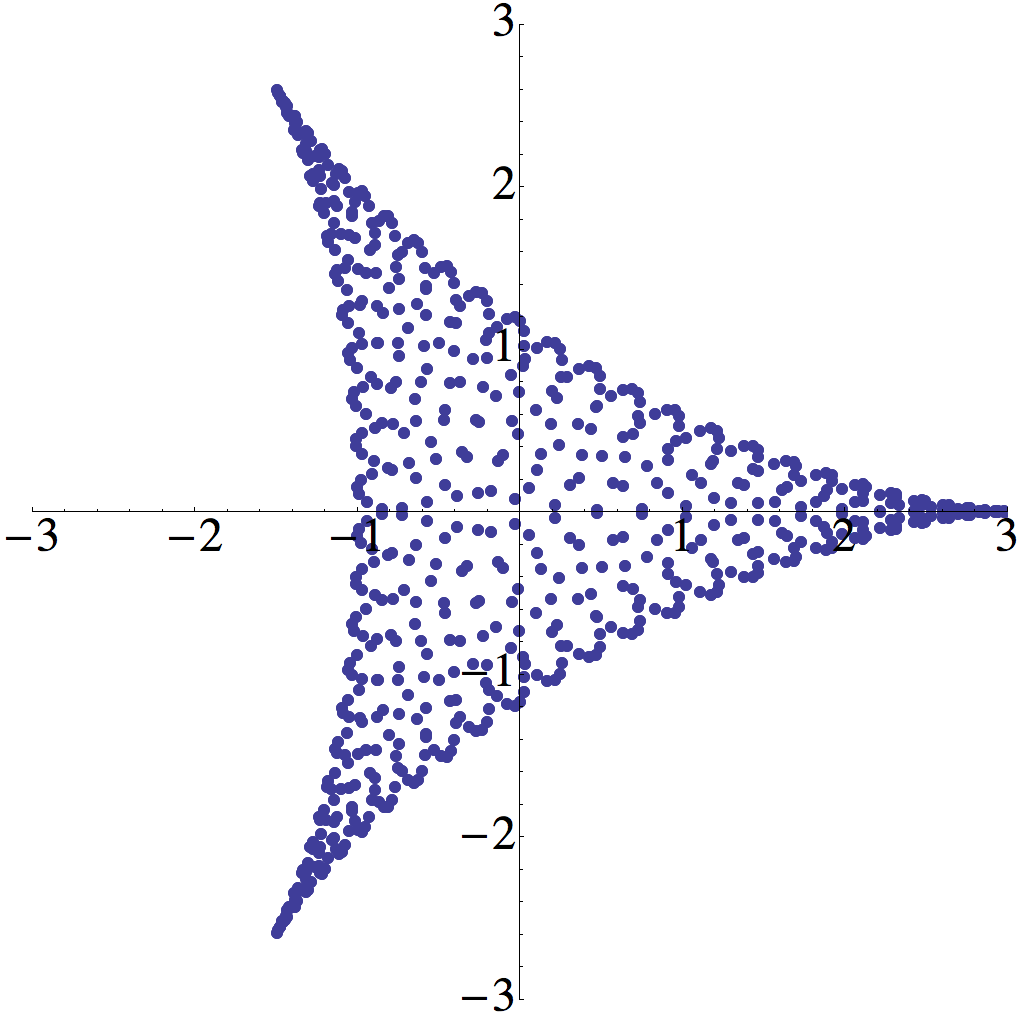}
	                \caption{{\scriptsize $p=2017$, $A=\langle 294\rangle$}}
	        \end{subfigure}
	        \quad
	                \begin{subfigure}[b]{0.30\textwidth}
	                \centering
	                \includegraphics[width=\textwidth]{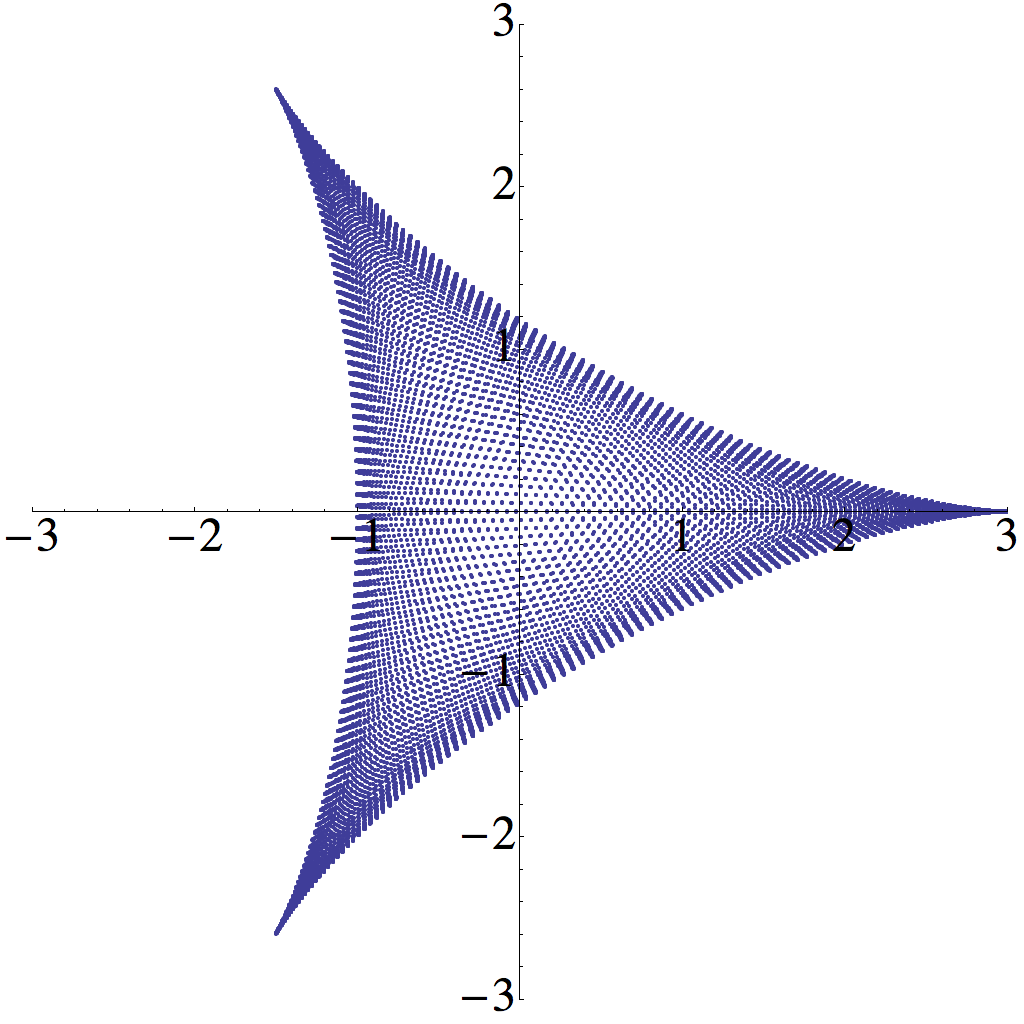}
	                \caption{{\scriptsize $p=32587$, $A=\langle 10922\rangle$}}
	        \end{subfigure}
			\quad
	        \begin{subfigure}[b]{0.30\textwidth}
	                \centering
	                \includegraphics[width=\textwidth]{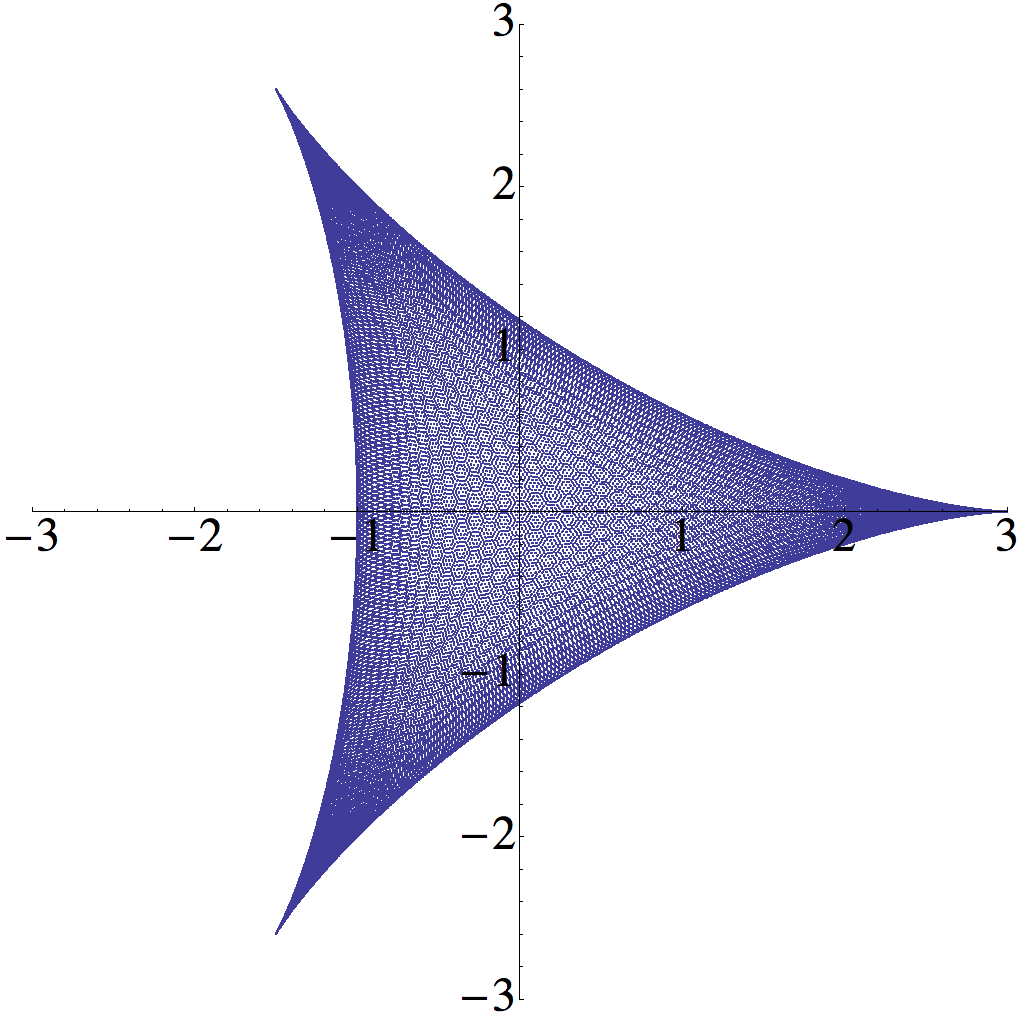}
	                \caption{{\scriptsize $p=200017$, $A=\langle 35098\rangle$}}
	        \end{subfigure}
		\\
	        \begin{subfigure}[b]{0.30\textwidth}
	                \centering
	                \includegraphics[width=\textwidth]{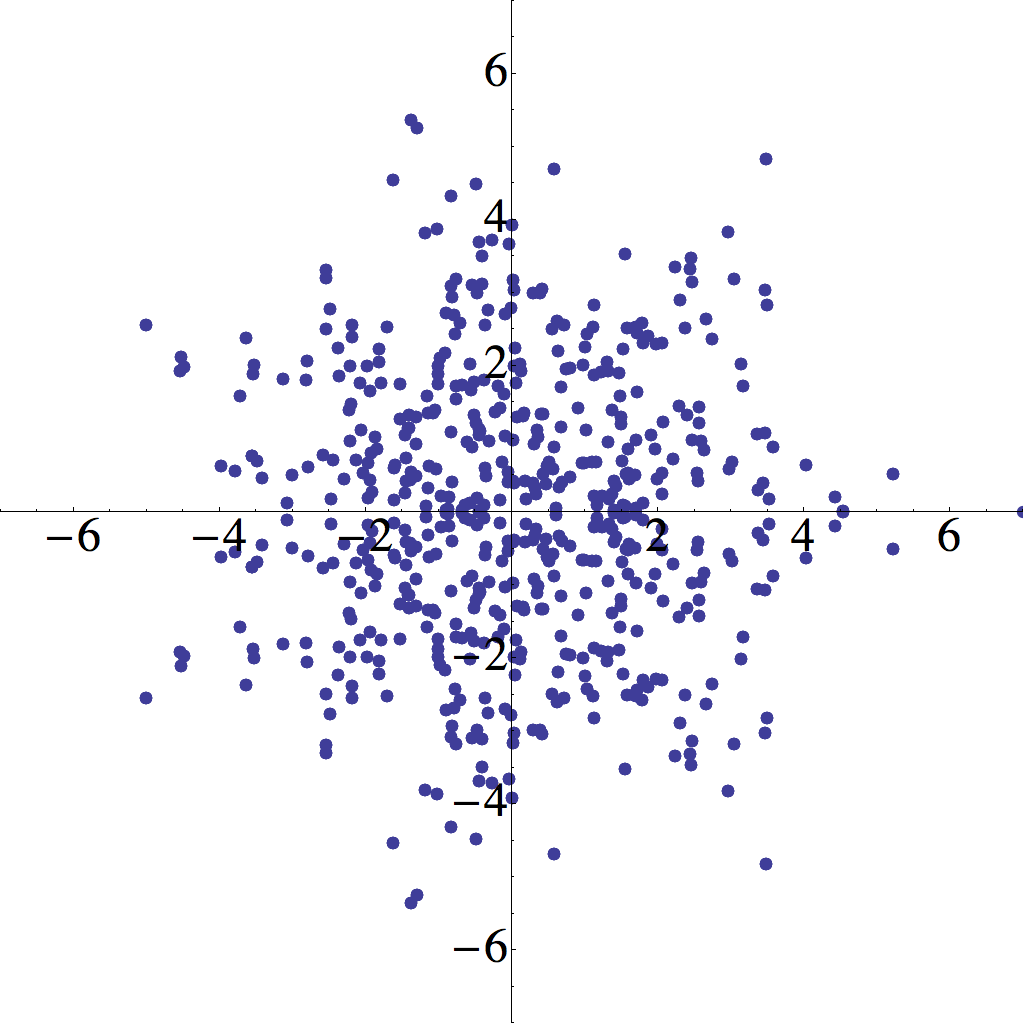}
	                \caption{{\scriptsize $p=4019$, $A=\langle 1551\rangle$}}
	        \end{subfigure}
		\quad
	        \begin{subfigure}[b]{0.30\textwidth}
	                \centering
	                \includegraphics[width=\textwidth]{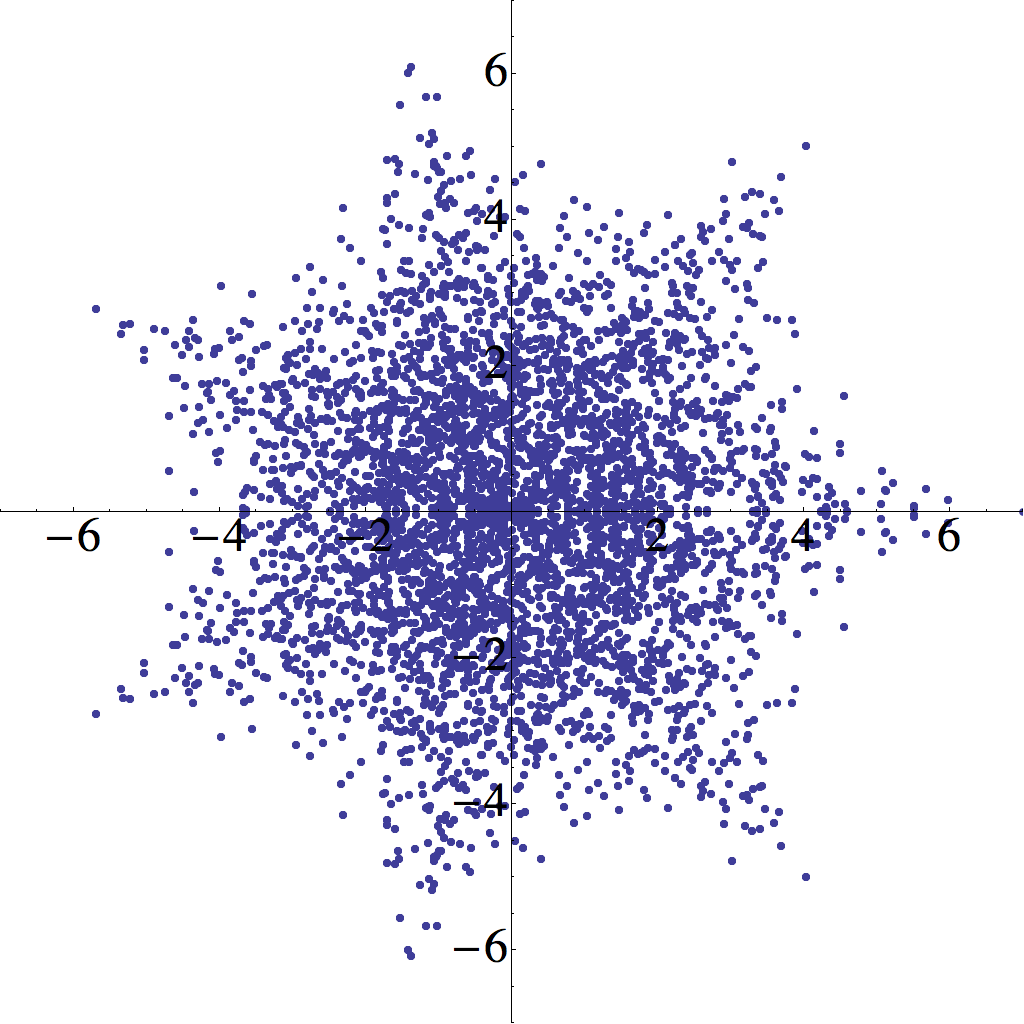}
	                \caption{{\scriptsize $p=32173$, $A=\langle 3223\rangle$}}
	        \end{subfigure}
	        \quad
	        \begin{subfigure}[b]{0.30\textwidth}
	                \centering
	                \includegraphics[width=\textwidth]{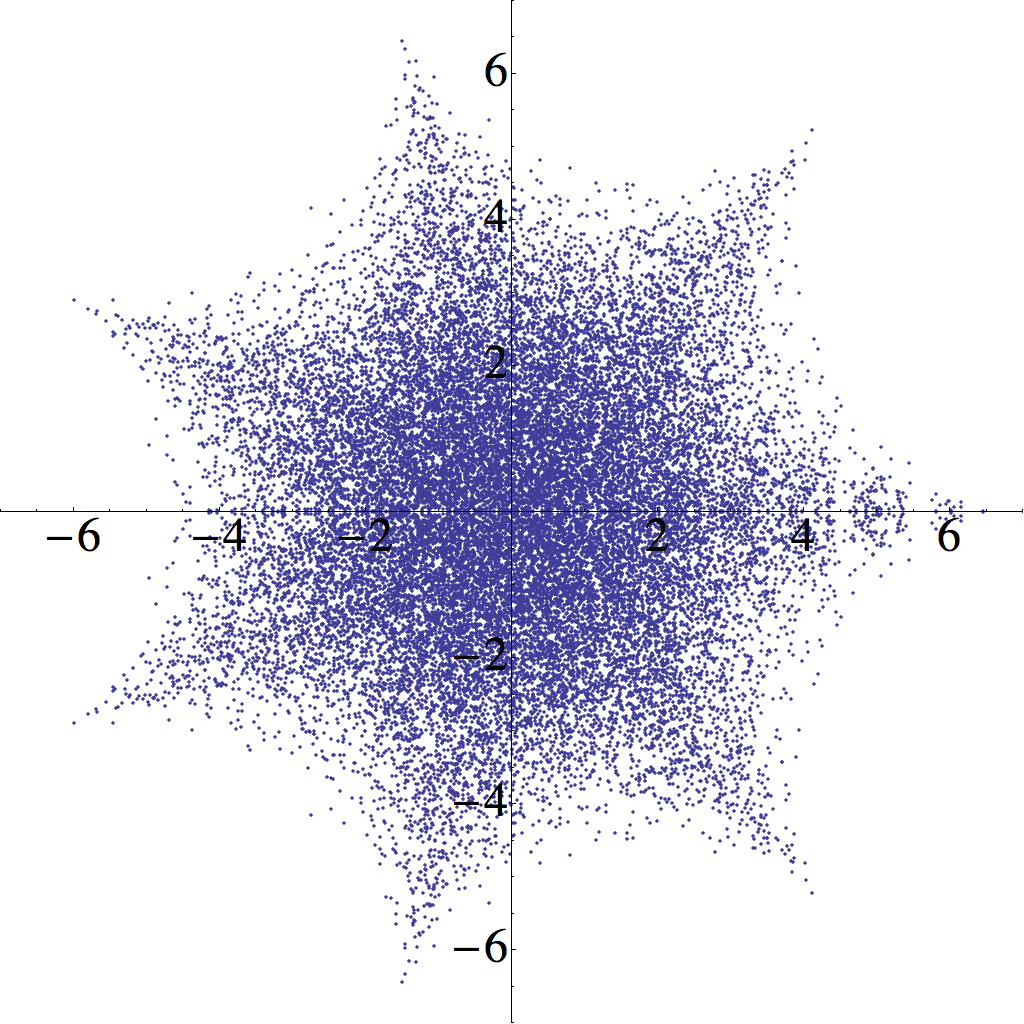}
	                \caption{{\scriptsize $p=200033$, $A=\langle 11073\rangle$}}
	        \end{subfigure}
	        \caption{Cyclic supercharacters $\sigma_X$ of $\Z/q\Z$, where $X=A1$, whose graphs fill out $|X|$-hypocycloids.}
		\label{fig:hypo}
	\end{figure}
	
	\begin{proof}[Pf.~of Thm.~\ref{TheoremHypo}]
		Computing the coefficients $b_{k,j}$ from \eqref{bkj} we find that
		$b_{k,j}=\delta_{kj}$ for $k=0,1,\ldots,d-2$, and $b_{d-1,j}=-1$ for all $j$, from which
		\eqref{eq-Mapping} yields
		\begin{equation*}
			g(z_1,z_2,\ldots,z_{d-1}) = z_1 + z_2 + \ldots + z_{d-1} + \frac{1}{z_1z_2\cdots z_{d-1}}.
		\end{equation*}
		The image of the function $g:\T^{d-1}\to\C$ defined above is the filled hypocycloid corresponding
		to the parameters $\kappa = d$ and $\lambda = 1$, as observed in \cite[$\S$3]{kaiser}.
	\end{proof}

\bibliography{GNGP}

\begin{thebibliography}{10}

\bibitem{An01}
Carlos A.~M. Andr{\'e}.
\newblock The basic character table of the unitriangular group.
\newblock {\em J. Algebra}, 241(1):437--471, 2001.

\bibitem{An02}
Carlos A.~M. Andr{\'e}.
\newblock Basic characters of the unitriangular group (for arbitrary primes).
\newblock {\em Proc. Amer. Math. Soc.}, 130(7):1943--1954 (electronic), 2002.

\bibitem{An95}
Carlos~A.M. Andr{\'e}.
\newblock Basic characters of the unitriangular group.
\newblock {\em J. Algebra}, 175(1):287--319, 1995.

\bibitem{berndt}
Bruce~C. Berndt, Ronald~J. Evans, and Kenneth~S. Williams.
\newblock {\em Gauss and {J}acobi sums}.
\newblock Canadian Mathematical Society Series of Monographs and Advanced
  Texts. John Wiley \& Sons Inc., New York, 1998.
\newblock A Wiley-Interscience Publication.

\bibitem{SESUP}
J.L. Brumbaugh, Madeleine Bulkow, Patrick~S. Fleming, Luis~Alberto Garcia,
  Stephan~Ramon Garcia, Gizem Karaali, Matthew Michal, and Andrew~P. Turner.
\newblock Supercharacters, exponential sums, and the uncertainty principle.
\newblock (preprint) \url{http://arxiv.org/abs/1208.5271}.

\bibitem{davenport}
Harold Davenport.
\newblock {\em Multiplicative number theory}, volume~74 of {\em Graduate Texts
  in Mathematics}.
\newblock Springer-Verlag, New York, third edition, 2000.

\bibitem{Diaconis-Isaacs}
Persi Diaconis and I.~M. Isaacs.
\newblock Supercharacters and superclasses for algebra groups.
\newblock {\em Trans. Amer. Math. Soc.}, 360(5):2359--2392, 2008.

\bibitem{CKS}
P.S. Fleming, S.R. Garcia, and G.~Karaali.
\newblock Classical {K}loosterman sums: representation theory, magic squares,
  and {R}amanujan multigraphs.
\newblock {\em J. Number Theory}, 131(4):661--680, 2011.

\bibitem{RSS}
Christopher Fowler, Stephan~Ramon Garcia, and Gizem Karaali.
\newblock {R}amanujan sums as supercharacters.
\newblock {\em Ramanujan J.}
\newblock (in press) \url{http://arxiv.org/abs/1201.1060}.

\bibitem{kaiser}
N.~Kaiser.
\newblock Mean eigenvalues for simple, simply connected, compact {L}ie groups.
\newblock {\em J. Phys. A}, 39(49):15287--15298, 2006.

\bibitem{Lehmers}
D.~H. Lehmer and Emma Lehmer.
\newblock Cyclotomy with short periods.
\newblock {\em Math. Comp.}, 41(164):743--758, 1983.

\bibitem{myerson}
Gerald Myerson.
\newblock A combinatorial problem in finite fields. {II}.
\newblock {\em Quart. J. Math. Oxford Ser. (2)}, 31(122):219--231, 1980.

\bibitem{neukirch}
J{\"u}rgen Neukirch.
\newblock {\em Algebraic number theory}, volume 322 of {\em Grundlehren der
  Mathematischen Wissenschaften}.
\newblock Springer-Verlag, Berlin, 1999.
\newblock Translated from the 1992 German original and with a note by Norbert
  Schappacher, With a foreword by G. Harder.

\bibitem{thaine}
F.~Thaine.
\newblock Properties that characterize {G}aussian periods and cyclotomic
  numbers.
\newblock {\em Proc. Amer. Math. Soc.}, 124(1):35--45, 1996.

\bibitem{weyl}
H.~Weyl.
\newblock \"{U}ber die {G}leichverteilung von {Z}ahlen mod. {E}ins.
\newblock {\em Math. Ann.}, 77(3):313--352, 1916.

\end{thebibliography}
\bibliographystyle{plain}
\end{document}